\newcommand{\diag}[1]{\mbox{${\rm diag}(#1)$}}
\definecolor{myred}{cmyk}{0.000000,1.000000,1.000000,0.1}
\definecolor{myblue}{cmyk}{1.000000,0.750000,0.000000,0.1}
\newcommand{\rank}[1]{\mbox{${\rm rank}(#1)$}}
\newcommand{\BC}{\mathbb{C}}
\newcommand{\BR}{\mathbb{R}}
\newcommand{\la}{\lambda}
\newcommand{\adots}{.\:\!\raisebox{.6ex}{.}\:\!\raisebox{1.2ex}{.}}
\newcommand{\hide}[1]{}
\newtheorem{example}[theorem]{Example}
\newcounter{parenumi}
  {\end{list}}
\title{Strongly minimal self-conjugate linearizations for polynomial and rational matrices\thanks{Submitted to the editors DATE. The results of this paper are included in Chapter 8 of the second author's PhD Thesis defended in September 28, 2021 at Universidad Carlos III de Madrid.
\funding{This publication is part of the ``Proyecto de I+D+i PID2019-106362GB-I00 financiado por MCIN/AEI/10.13039/501100011033''. It has been also funded by
``Ministerio de Econom\'ia, Industria y Competitividad (MINECO)'' of Spain  through grant MTM2017-90682-REDT and by the Madrid Government (Comunidad de Madrid-Spain) under the Multiannual Agreement with UC3M in the line of Excellence of University Professors (EPUC3M23), in the V PRICIT (Regional Programme of Research and Technological Innovation). Mar\'ia C. Quintana was funded by the ``contrato predoctoral'' BES-2016-076744 of MINECO
and by an Academy of Finland grant (Suomen Akatemian p\"{a}\"{a}t\"{o}s 331240). This work was partially developed while the third author held a ``Chair of Excellence UC3M - Banco de
Santander'' at Universidad Carlos III de Madrid in the academic year 2019-2020.}}}
\author{Froil\'an M. Dopico\thanks{Departamento de Matem\'aticas, Universidad Carlos III de Madrid, Avda. Universidad 30, 28911 Legan\'es, Spain (\email{dopico@math.uc3m.es}).}
\and Mar\'ia C. Quintana\thanks{Department of Mathematics and Systems Analysis, Aalto University, Otakaari 1, Espoo, Finland (\email{maria.quintanaponce@aalto.fi}).}
\and Paul~Van~Dooren\thanks{Department of Mathematical Engineering, Universit\'{e} catholique de Louvain, Avenue Georges Lema\^{i}tre 4, B-1348 Louvain-la-Neuve, Belgium (\email{paul.vandooren@uclouvain.be}).}
}
\begin{document}

\maketitle

\begin{abstract}
We prove that we can always construct strongly minimal linearizations of an arbitrary rational matrix from its Laurent expansion around the point at infinity, which happens to be the case for polynomial matrices expressed in the monomial basis.  If the rational matrix has a particular self-conjugate structure we show how to construct strongly minimal linearizations that preserve it. The structures that are considered are the Hermitian and skew-Hermitian rational matrices with respect to the real line,
and the para-Hermitian and para-skew-Hermitian matrices with respect to the imaginary axis. We pay special attention to the construction of strongly minimal linearizations for the particular case of structured polynomial matrices. The proposed constructions lead to efficient numerical algorithms for constructing strongly minimal linearizations. The fact that they are valid for {\em any} rational matrix is an improvement on any other previous approach for constructing other classes of structure preserving linearizations, which are not valid for any structured rational or polynomial matrix. The use of the recent concept of strongly minimal linearization is the key for getting such generality.

Strongly minimal linearizations are Rosenbrock's polynomial system matrices of the given rational matrix, but with a quadruple of  linear polynomial matrices (i.e. pencils)~:
	$$L(\la):=\left[\begin{array}{ccc} A(\lambda) & -B(\lambda) \\ C(\lambda) & D(\lambda) \end{array}\right],$$
	where $A(\lambda)$ is regular, and the pencils $
	\left[\begin{array}{ccc} A(\la) & -B(\la)  \end{array}\right]$ and $ \left[\begin{array}{ccc} A(\la) \\ C(\la)   \end{array}\right]$ have no finite or infinite eigenvalues. Strongly minimal linearizations contain the complete information about the zeros, poles and minimal indices of the rational matrix and allow to recover very easily its eigenvectors and minimal bases. Thus, they can be combined with algorithms for the generalized eigenvalue problem for computing the complete spectral information of the rational matrix.
\end{abstract}

\begin{keywords}
Structured realizations, structured linearizations, strong minimality, self-conjugate rational matrices
\end{keywords}

\begin{AMS}
65F15, 15A18, 15A22, 15A54, 93B18, 93B20, 93B60
\end{AMS}

\section{Introduction}
\label{sec:Introduction}

In the seventies, Rosenbrock \cite{Ros70} introduced the concept of a polynomial system matrix $L(\la)$ of an arbitrary rational matrix $R(\la)\in\BC(\la)^{m\times n}$.
Such a system matrix is partitioned in a quadruple $\{A(\la),B(\la),C(\la),D(\la)\}$ of compatible polynomial matrices
\begin{equation} \label{pencil} L(\la):=\left[\begin{array}{ccc} A(\la) & -B(\la) \\ C(\la) & D(\la)
\end{array}\right]
\end{equation}
such that its Schur complement with respect to $D(\la)$ equals $R(\la)$. That is, $R(\la)= D(\la) + C(\la)A(\la)^{-1}B(\la)$. Then the quadruple $\{A(\la),B(\la),C(\la),D(\la)\}$ is said to be a realization of $R(\la)$. Rosenbrock showed that one can retrieve from the polynomial matrices $A(\la)$ and $L(\la)$, respectively, the finite pole and zero structure of $R(\la)$, provided $L(\la)$ is {\em irreducible} or {\em minimal}, meaning that the matrices
\begin{equation}  \label{minimal}
\left[\begin{array}{ccc} A(\la) & {\color{blue}-}B(\la)  \end{array}\right], \quad \left[\begin{array}{ccc} A(\la) \\ C(\la)   \end{array}\right],
\end{equation}
have, respectively, full row and column rank for all finite $\la\in \BC$. It was shown recently in \cite{DQV} that when the quadruple consists of polynomial matrices of degree at most one, i.e. pencils, then one can recover the complete eigenstructure of the rational matrix, namely its finite and infinite polar and zero structure, and its left and right null space structure from the pencils $A(\la)$ and $L(\la)$ provided the pencils in \eqref{minimal} have full rank for all $\la$, infinity included. Moreover, in this situation, the eigenvectors and minimal bases of $R(\la)$ can be very easily recovered from those of $L(\la)$, and their minimal indices are the same. In such a case, $L(\la)$ is said to be {\em strongly minimal} \cite{DMQVD,DQV} or, also, a {\em strongly minimal linearization of $R(\la)$}.
The main advantage of using pencils is that there are well-established stable algorithms to
compute their eigenstructure using unitary transformations only, both in the regular \cite{moler-stewart} and in the singular \cite{Van79b} case. There are also algorithms available to derive strongly minimal linear polynomial system matrices, from non-minimal ones. These algorithms are also based on unitary transformations only \cite{Van81,DQV}.

In this paper, we show how to construct strongly minimal linearizations for rational matrices $R(\la)\in\BC(\la)^{m\times n}$ starting from a Laurent expansion around the point at infinity~:
\begin{equation} \label{Laurent}
R(\la)= R_d \la^d +\cdots + R_1\la + R_0 + R_{-1}\la^{-1} + R_{-2}\la^{-2} + R_{-3}\la^{-3} + \cdots,
\end{equation}
which is convergent for sufficiently large $\la\in \BC$. The approach we propose is also valid if,  instead of considering the Laurent expansion $R_{-1}\la^{-1} + R_{-2}\la^{-2} + R_{-3}\la^{-3} + \cdots$ for the strictly proper part of $R(\la)$, {\em any} minimal state-space realization of the strictly proper part of $R(\la)$ is given.

If the rational matrix is square (i.e. $m=n$) and has a particular type of self-conjugate structure the coefficients $R_{i}\in\BC^{m\times m}$ of its expansion also inherit the self-conjugate structure and the poles and zeros of $R(\la)$ appear in self-conjugate pairs. Such structures arise in many applications, as we comment below, and in these cases we also show how to construct strongly minimal linearizations preserving the structure. In particular, we consider here four types of self-conjugate rational matrices, two with respect to the real line and two with respect to the imaginary axis. The Hermitian and skew-Hermitian rational matrices $R(\la)$, with respect to the real line, satisfy
$$ [R(\la)]^* = R(\overline \la), \quad \mathrm{and} \quad  [R(\la)]^* = -R(\overline \la),
$$
respectively. They have poles and zeros that are mirror images with respect to the real line $\BR$, and have coefficient matrices $R_i$ that are
Hermitian (i.e. $R_{i}^*=R_{i}$) and skew-Hermitian (i.e. $R_{i}^*=-R_{i}$), respectively.
The para-Hermitian and para-skew-Hermitian rational matrices, with respect to the imaginary axis, satisfy
$$  [R(\la)]^* = R(-\overline \la), \quad \mathrm{and} \quad  [R(\la)]^* = -R(-\overline \la),
$$
respectively. They have poles and zeros that are mirror images with respect to the  imaginary line $\jmath\BR$, and have scaled coefficient matrices $\jmath^i R_i$ that are Hermitian and skew-Hermitian, respectively. The nomenclature introduced above is used in the linear systems and control theory literature (see, for instance, \cite{GHNSVX02,ran2,ran1} and the references therein). However, in standard references on structured polynomial matrices \cite{GoodVibrations,MMMM-alternating} the para-Hermitian and para-skew-Hermitian structures are called alternating structures, because the matrix coefficients satisfy, respectively, $R_i^* = (-1)^i R_i$ and $R_i^* = (-1)^{i+1} R_i$ and, thus, alternate between being Hermitian or skew-Hermitian matrices. Specifically, para-Hermitian polynomial matrices are called $*$-even and para-skew-Hermitian polynomial matrices are called $*$-odd in \cite{GoodVibrations,MMMM-alternating}.

 There are of course equivalent definitions for real rational matrices, where all coefficient matrices $R_i$ are real. Namely, (skew-)symmetric and para-(skew-)symmetric rational matrices. In these cases, the poles and zeros satisfy the same symmetries that have been described above.

The symmetries in the zeros and poles of structured polynomial and rational matrices reflect specific physical properties, as they originate usually from the physical symmetries of the underlying applications \cite{GHNSVX02,hmmt2006,Lancaster_paper,GoodVibrations,rail1}. Such special structures occur in numerous applications  in engineering, mechanics, control, and linear systems theory. Some of the most common algebraic structures that appear in applications are the (skew-)symmetric (or Hermitian), and  the para-(skew-)symmetric (or Hermitian) or alternating structures considered in this work (see \cite{hmmt2006,GoodVibrations,ran2,ran1} and the references therein). For instance,
symmetric (or Hermitian) matrix polynomials arise in the classical problem of vibration analysis \cite{Hermitian1,Lancaster66}, and alternating matrix polynomials find applications in the study of corner singularities in anisotropic elastic materials \cite{corner3} and in the study of gyroscopic systems \cite{Lancaster_paper}. Rational matrices with the structures mentioned above have appeared, for instance, in the continuous-time linear-quadratic optimal control problem and in the spectral factorization problem \cite{GHNSVX02,ran2,ran1,Van81b}.

Because of the numerous applications where structured rational and polynomial  matrices occur, there have been many attempts to construct
linearizations for such structured rational and polynomial matrices that display the same structure as that of the rational or polynomial matrix (see \cite{Greeks2,das-alam-affine,DasAlamArXiv,DMQ,PartI,GHNSVX02, hmmt2006, Lancaster66,GoodVibrations} among many other references on this topic). An important motivation for this search is to preserve numerically in floating point arithmetic the symmetries of the zeros and poles of these structured problems by applying structured algorithms for structured generalized eigenvalue problems to these structured linearizations \cite{skew-staircase,Implicit_palQR,Jacobi_Hermitian,Krylov_symmetric,corner3,Schroder_thesis}. However, all these earlier attempts to construct structured linearizations find obstacles when they are applied to the structured problems considered in this paper, because they either cover only a subclass of the structures, or they impose certain conditions on the rational and polynomial matrices for their construction to apply, such as regularity, strict properness or invertibility of certain matrix coefficients. We emphasize that, for some polynomial matrices, the mentioned obstacles cannot be overcome in any way with the previously adopted definitions of linearization, because it has been proved in \cite{GoodVibrations,MMMM-alternating} that there exist alternating polynomial matrices which cannot be linearized at all according to the standard definitions of linearizations in \cite{GoodVibrations,MMMM-alternating}. In contrast, in the present paper, we give a construction of structured {\em strongly minimal linearizations} valid  for arbitrary rational and polynomial matrices, with any of the above four structures. Moreover, the proof used for this construction is different from these earlier papers, and we claim it to be simpler as well.

The paper is organized as follows. In Section \ref{Background}, we develop background material for the problem and introduce strongly minimal linearizations for polynomial and rational matrices. In Section \ref{sec:linearization_polynomial}, we show how to construct strongly minimal linearizations of arbitrary polynomial matrices, paying particular attention to quadratic polynomial matrices in Subsection \ref{subsec.lowrank}. In Section \ref{sec:self-cong-poly}, we extend this construction to structured strongly minimal linearizations of structured polynomial matrices. In Sections \ref{sec:sproper} and \ref{sec:selft-conjugate-sproper}, we develop analogous results for strictly proper rational matrices. That is, we build strongly minimal linearizations for arbitrary and structured strictly proper rational matrices, respectively. In Section \ref{sec:rational}, we combine the results in previous sections to construct strongly minimal linearizations for arbitrary and structured rational matrices. Finally, in Section \ref{sec:alg}, we comment some algorithmic aspects and, in Section \ref{sec:conclusions}, we give some concluding remarks and some lines of possible future research.

\section{Background and strongly minimal linearizations} \label{Background}

This section recalls basic definitions that are used throughout the paper and discusses the recent concept of strongly minimal linearizations of rational matrices \cite{DMQVD,DQV}, which is fundamental in this work. We refer to \cite{Kai80,Ros70} for more details.

We consider the field of complex numbers $\mathbb{C}$. Then $\mathbb{C}[\la]^{m\times n}$ and $\mathbb{C}(\la)^{m\times n}$ denote the sets of $m\times n$ matrices whose entries are in the ring of polynomials $\mathbb{C}[\la]$ and in the field of rational functions $\mathbb{C}(\la)$, respectively. Their elements are called polynomial and rational matrices.

A rational function $r(\la)=\frac{n(\la)}{d(\la)}$ is said to be proper if $\deg(n(\la))\leq\deg(d(\la))$ and strictly proper if $\deg(n(\la))<\deg(d(\la))$, where $\deg (\cdot)$ stands for degree. A (strictly) proper rational matrix is a matrix whose entries are (strictly) proper rational functions.
	By the division algorithm for polynomials, any rational function $r(\la)\in\BC(\la)$ can be uniquely written as $r(\la)=p(\la)+r_{sp}(\la),$ where $p(\la)$ is a polynomial and $r_{sp}(\la)$ a strictly proper rational function. Therefore, any rational matrix $R(\la)\in\BC(\la)^{m\times n}$ can be uniquely written as
	\begin{equation}\label{eq.polspdec}
	R(\la)=P(\la)+R_{sp}(\la)
	\end{equation}
	where $P(\la)\in\BC[\la]^{m\times n}$ is a polynomial matrix and $R_{sp}(\la)\in \BC(\lambda)^{m\times n}$ is a strictly proper rational matrix. Then, $P(\la)$ is called the polynomial part of $R(\la)$ and $R_{sp}(\la)$ the strictly proper part of $R(\la)$.

A rational matrix $M(\la)\in \mathbb{C}(\la)^{m\times n}$ is regular if it is square and its determinant is not identically equal to $0$. Otherwise, $M(\la)$ is said to be singular. A square rational matrix $M(\la)\in \mathbb{C}(\la)^{m\times m}$ is regular at a point $\la_0\in \mathbb{C}$ if $M(\la_0)$ is invertible, with $M(\la_0)\in \mathbb{C}^{m\times m}.$ $M(\la)$ is regular at infinity or biproper if $M(1/\la)$ is regular at $0.$ If $M(\la)$ is regular for all $\la_0\in \mathbb{C}$ then $M(\la)$ is said to be unimodular and, equivalently, it is a polynomial matrix with constant nonzero determinant. The normal rank of a rational matrix is the size of its largest nonidentically zero minor.

Poles and zeros of rational matrices are defined via the local Smith--McMillan form \cite{vandooren-laurent-1979,AmMaZa15}. Let $R(\la)\in\mathbb{C}(\la)^{m\times n}$ be a rational matrix of normal rank $r$ and let $\lambda_0\in \mathbb{C}$. Then there exist rational matrices $M_{\ell}(\lambda)$ and $M_r(\lambda)$ regular at $\lambda_0$ such that
  \begin{equation} \label{snf}
    M_{\ell}(\lambda)R(\lambda)M_r(\lambda) = \diag{(\la-\la_{0})^{d_{1}} ,\ldots, (\la-\la_{0})^{d_{r}}, 0_{(m-r)\times (n-r)}}  ,
  \end{equation}
  where $d_1\le d_2 \le \cdots \le d_r$ are integer numbers. The diagonal matrix in \eqref{snf} is unique and is called the {\em local Smith-McMillan form} of $R(\lambda)$ at $\lambda_0$. The exponents $d_i$ are called the
  {\em structural indices} of $R(\lambda)$ at $\lambda_0$. If there are strictly positive indices in \eqref{snf} and they are $0 < d_p\le \cdots \le d_r$, then $\la_0$ is a zero of $R(\la)$ with partial multiplicities $(d_p, \ldots , d_r)$. In this case, we also say that $(d_p, \ldots , d_r)$ is the zero structure of $R(\la)$ at $\la_0$. If there are strictly negative indices  in \eqref{snf} and they are $d_1\le \cdots \le d_q <0$, then $\la_0$ is a pole of $R(\la)$ with partial multiplicities $(-d_1, \ldots , -d_q)$. In this case, we also say that $(-d_1, \ldots , -d_q)$ is the pole (or polar) structure of $R(\la)$ at $\la_0$. If $\lambda_0=\infty$, then the factor $(\lambda-\lambda_0)$ is replaced by $\frac{1}{\lambda}$ in \eqref{snf}, the matrices $M_{\ell}(\lambda)$ and $M_r(\lambda)$ are biproper, and the structural indices, zeros, poles, as well as their partial multiplicities, of $R(\la)$ at infinity are defined analogously. Observe that the structural indices and the pole and the zero structures of $R(\la)$ at infinity are exactly those of $R(1/\la)$ at zero.

  The zero structure of a rational matrix $R(\la)\in\mathbb{C}(\la)^{m\times n}$ is comprised by the set of its zeros (finite and infinite) and their partial multiplicities. The sum of the partial multiplicities of all the zeros (finite and infinite) of $R(\la)$ is called the zero degree $\delta_z (R)$ of $R(\la)$. The pole (or polar) structure of a rational matrix $R(\la)\in\mathbb{C}(\la)^{m\times n}$ is comprised by the set of its poles (finite and infinite) and their partial multiplicities. The sum of the partial multiplicities of all the poles (finite and infinite) of $R(\la)$ is called the polar degree $\delta_p (R)$ of $R(\la)$, or, also, the McMillan degree of $R(\la)$  \cite{Kai80}.

  \begin{remark}\rm Polynomial matrices are particular cases of rational matrices. Therefore, the definitions above can be applied to polynomial matrices. However, standard literature on polynomial matrices \cite{Gan59,GohbergLancasterRodman09} use the term {\em eigenvalues} instead of {\em zeros and poles} and define the {\em structure at infinity} in a different way. We discuss these points in this remark. Note first that a polynomial matrix $P(\la)$ does not have finite poles, i.e., all the indices $d_i$ in \eqref{snf} are nonnegative for any finite $\la_0$. The finite eigenvalues of $P(\la)$ and their partial multiplicities \cite{GohbergLancasterRodman09} are exactly the same as the finite zeros of $P(\la)$ and their partial multiplicities. However, in \cite{GohbergLancasterRodman09}, a polynomial matrix $P(\la)$ of degree $d$ and normal rank $r$ is said to have an eigenvalue at infinity with partial multiplicities $0 < t_p \leq \cdots \leq t_r$ if the {\em reversal} polynomial matrix $\text{rev}_d P(\la) := \la^d P(1/\la)$ has an eigenvalue at $0$ with partial multiplicities $0 < t_p \leq \cdots \leq t_r$. In this situation the structural indices \eqref{snf} of $P(\la)$ at infinity when viewed as a rational matrix are
  \begin{equation} \label{eq.infstruct22}
	(d_{1}, d_{2},\ldots , d_{r})=(\underbrace{0,\ldots,0}_{p-1}, t_p ,\ldots, t_r) - (d,d,\ldots, d).
	\end{equation}
  Thus, the pole-zero structures of a polynomial matrix at infinity are different from its ``eigenvalue structure'' at infinity defined through the reversal, but they are easily related through \eqref{eq.infstruct22} and are completely equivalent to each other. From now on, we will make a clear distinction for any polynomial matrix $P(\la)$ of degree $d$: whenever we talk about its ``{\em eigenvalue} structure at infinity'', we refer to the zero structure of $\text{rev}_d P(\la)$ at $0$, and whenever we talk about its ``{\em pole or zero} structures at infinity'', we refer to the pole or zero structures of $P(1/\la)$ at $0$. Recall that such a distinction is not necessary at finite points. Moreover, we emphasize that a polynomial matrix of degree $d>0$ may or may not have an eigenvalue at infinity, may or may not have a zero at infinity, but {\em always has a pole at infinity} with largest partial multiplicity (or order) $d$. More on this topic can be found in \cite{ADHM19}. Finally, note that for pencils, i.e., polynomial matrices with degree $1$, the definition of ``eigenvalue structure at infinity'' via reversals is equivalent to that coming from the Kronecker canonical form \cite{Gan59}, and that the relation \eqref{eq.infstruct22} was pointed out in \cite{Van81}.
  	\end{remark}

  In addition to the pole and zero structures, a singular rational matrix has a singular structure or minimal indices. In order to define them, recall that every rational vector subspace $\mathcal{V}$,
  i.e., every subspace $\mathcal{V} \subseteq \mathbb{C}(\la)^n$ over the field $\mathbb{C}(\la)$,
  has bases consisting entirely of polynomial vectors. We call them polynomial bases.
  By Forney \cite{For75}, a minimal basis of $\mathcal{V}$ is a polynomial basis of $\mathcal{V}$ consisting of polynomial vectors whose sum of degrees is minimal among all polynomial bases of $\mathcal{V}$.
  Though minimal bases are not unique, the ordered list of degrees of the polynomial vectors in any minimal basis of $\mathcal{V}$ is unique. These degrees are called the minimal indices of $\mathcal{V}$.

We now consider a rational matrix $R(\la)\in\mathbb{C}(\la)^{m\times n}$ and the rational vector subspaces:
\[
\begin{array}{l}
\mathcal{N}_r (R)=\{x(\la)\in\mathbb{C}(\la)^{n\times 1}: R(\la)x(\la)=0\}, \text{ and}\\
\mathcal{N}_\ell (R)=\{y(\la)^{T}\in\mathbb{C}(\la)^{1\times m}: y(\la)^T R(\la)=0\},
\end{array}
\]
which are called the right and left null-spaces of $R(\la)$, respectively. If $R(\lambda)$ is singular, then at least one of these null-spaces is non-trivial. If ${\cal N}_{r}(R)$ (resp. ${\cal N}_{\ell}(R)$) is non-trivial, it has minimal bases and minimal indices, which are called the right (resp. left) minimal bases and minimal indices of $R(\la)$. Notice that an $m\times n$ rational matrix of normal rank
  $r$ has $m-r$ left minimal indices and $n-r$ right minimal indices.

The {\em complete list of structural data of a rational matrix} is formed by its zero structure, its pole structure, and its left and right minimal indices.

The following degree sum theorem \cite{VVK79} relates the structural data of a rational matrix $R(\la)$. In particular, it relates the McMillan degree $\delta_p (R)$ and the zero degree $\delta_z(R)$ of $R(\la)$ to the {\em left null space degree} $\delta_\ell(R)$ of $R(\la),$ that is the sum of all left minimal indices, and to the {\em right null space degree} $\delta_r(R)$ of $R(\la),$ that is the sum of all right minimal indices.
  \begin{theorem} \label{degreesum}
  	Let $R(\la)\in  \mathbb{C}(\la)^{m\times n}$ be a rational matrix. Then
  	$$\delta_p(R)= \delta_z(R)+ \delta_\ell(R)+ \delta_r(R).$$
  \end{theorem}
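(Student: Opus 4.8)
I would prove the equivalent identity $\delta_p(R)-\delta_z(R)=\delta_\ell(R)+\delta_r(R)$ in two stages: first the regular case, where it amounts to the classical fact that a nonzero rational function has equally many zeros and poles on the Riemann sphere $\BC\cup\{\infty\}$; then a reduction of the general case to the regular one, obtained by stripping off a minimal basis of $\mathcal N_r(R)$ (and, by transposition, of $\mathcal N_\ell(R)$) one null space at a time. The only subtle point in the reduction will be controlling how the Smith--McMillan structure at infinity changes under such a step.

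For any rational matrix $R$ of normal rank $r$, write $\sigma(\la_0)$ for the sum of the local Smith--McMillan structural indices of $R$ at $\la_0$; then, summing over all $\la_0\in\BC\cup\{\infty\}$ (only finitely many terms being nonzero), $\delta_p(R)-\delta_z(R)=-\sum_{\la_0}\sigma(\la_0)$. If $R$ is regular, then $\delta_\ell(R)=\delta_r(R)=0$ and $\sigma(\la_0)=\mathrm{ord}_{\la_0}(\det R)$ for every $\la_0$, infinity included; since $\det R$ is a nonzero rational function, the sum of its orders over $\BC\cup\{\infty\}$ is zero, and the identity follows. By transposing $R$ if necessary (which interchanges $\delta_\ell$ and $\delta_r$ and preserves $\delta_p$ and $\delta_z$) I may then assume, in the inductive step on $(m-r)+(n-r)$, that $n>r$.

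The construction for the inductive step would be as follows. Let $N_r(\la)\in\BC[\la]^{n\times(n-r)}$ be a minimal basis of $\mathcal N_r(R)$ with column degrees $\eta_1,\dots,\eta_{n-r}$, so $\delta_r(R)=\sum_j\eta_j$. A minimal basis has full column rank at every finite point, hence completes to a unimodular matrix; moreover the completion $V(\la)=[\,N_1(\la)\ N_r(\la)\,]$ can be chosen so that the top $r$ rows $W(\la)$ of $V(\la)^{-1}$ form a minimal basis of $\mathcal N_\ell(N_r)$ --- one starts from any minimal basis $W$ of that space, completes it to a unimodular matrix, notes that the resulting lower block times $N_r$ is forced to be unimodular, and uses that block to correct the lower part. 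Setting $\hat R:=RN_1$, one has $RV=[\,\hat R\ 0\,]$ and $R=\hat R W$, with $\hat R\in\BC(\la)^{m\times r}$ of full column normal rank. Since $V$ is unimodular and adjoining zero columns leaves the nonzero part of a local Smith--McMillan form unchanged, $R$ and $\hat R$ share the same \emph{finite} zero and pole structures and the same left null space, and $\delta_r(\hat R)=0$. For the part at infinity, apply the Cauchy--Binet formula to $R=\hat R W$: as $\hat R$ has exactly $r$ columns and $W$ exactly $r$ rows, each $r\times r$ minor of $R$ is the product of one $r\times r$ minor of $\hat R$ with one of $W$, so $\sigma(\infty;R)=\sigma(\infty;\hat R)+\sigma(\infty;W)$; and since $W$ is a row-reduced minimal basis, its structural indices at infinity are precisely the negatives of its row degrees, whence $-\sigma(\infty;W)$ equals the sum of the row degrees of $W$, which equals $\sum_j\eta_j=\delta_r(R)$ because two dual minimal bases have the same sum of degrees (a classical fact). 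Splitting $\delta_p-\delta_z$ into finite and infinite parts, using that the finite parts of $R$ and $\hat R$ agree, and invoking the induction hypothesis $\delta_p(\hat R)-\delta_z(\hat R)=\delta_\ell(\hat R)$, one gets $\delta_p(R)-\delta_z(R)=\bigl(\delta_p(\hat R)-\delta_z(\hat R)\bigr)-\sigma(\infty;W)=\delta_\ell(R)+\delta_r(R)$, closing the induction.

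The finite structure is essentially free (it is transported verbatim by the unimodular $V$), so the entire difficulty lies at infinity, i.e.\ in the identity $-\sigma(\infty;W)=\delta_r(R)$. The step I would expect to take the most care is guaranteeing a unimodular completion $V$ for which $W$ is genuinely a \emph{minimal} basis of $\mathcal N_\ell(N_r)$ --- which uses both the completability of minimal bases to unimodular matrices and the forced unimodularity of the auxiliary block --- together with the two structural inputs used above: that the structure at infinity of a row-reduced polynomial matrix is read off from its row degrees, and that dual minimal bases have equal sums of degrees. An alternative route, more in the spirit of the rest of the paper, would be to realize $R$ by a strongly minimal linearization $L(\la)$, transfer the four components of structural data to the pencils $L(\la)$ and $A(\la)$, and reduce the identity to its counterpart for pencils, where it is immediate from the Kronecker canonical form.
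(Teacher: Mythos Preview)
The paper does not actually prove this theorem: it is stated with a citation to \cite{VVK79} and then used as a known structural fact, so there is no proof in the paper to compare against.

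That said, your argument is correct. The regular case is exactly the balance of zeros and poles of $\det R$ on $\BC\cup\{\infty\}$, and your reduction step is sound: with $R=\hat R\,W$ where $\hat R$ has $r$ columns and $W$ has $r$ rows, every $r\times r$ minor of $R$ is a single product (not a Cauchy--Binet sum), so the minimum valuation at any $\la_0$ splits additively and $\sigma(\la_0;R)=\sigma(\la_0;\hat R)+\sigma(\la_0;W)$; the finite contributions of $W$ vanish because a minimal basis has full rank at every finite point, and $-\sigma(\infty;W)=\delta_r(R)$ follows from row-reducedness together with the equality of degree sums for dual minimal bases. The construction forcing $W$ to be a minimal basis of $\mathcal N_\ell(N_r)$---via the forced unimodularity of the block $W'N_r$---is the one genuinely delicate detail, and your sketch handles it correctly. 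Your alternative route, passing to a strongly minimal linearization and reading the identity off the Kronecker canonical form of the resulting pencils, is closer in spirit to how \cite{VVK79} actually proceeds (through generalized state-space realizations); that argument is shorter once the realization machinery is in place, while yours is self-contained and makes transparent exactly where the null-space degrees enter the count.
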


\subsection{Strongly minimal linearizations and their relation with other cla\-sses of linearizations} \label{subsec.stronglyminlinear}
  Linearizing rational matrices is one of the most competitive methods for computing their complete lists of structural data. This means constructing a matrix pencil such that the complete list of structural data of the corresponding rational matrix can be recovered from the structural data of the pencil. In this paper, we focus on the strongly minimal linearizations introduced in Definition \ref{def.maquintstrongmin}. For the purpose of comparing our results with others available in the literature, we also revise very briefly other notions of linearizations.

  Since the results in this paper are relevant also when they are applied to polynomial matrices, we start with a very popular notion of linearization of a polynomial matrix. A pencil $L(\lambda)$ of degree $1$ is a linearization in the sense of Gohberg, Lancaster and Rodman \cite{GohbergLancasterRodman09}, or in the GLR-sense for short, of a polynomial matrix $P(\lambda)$ of degree $d >1$, if there exist unimodular matrices $U(\lambda)$ and $V(\lambda)$ such that
 \[
 U(\lambda)L(\lambda)V(\lambda) =
 \begin{bmatrix}
 P(\lambda) & 0 \\
 0 & I_s
 \end{bmatrix},
 \]
 where $I_s$ denotes the identity matrix of size any integer $s\geq 0$. The key property of a GLR-linearization is that it has the same finite eigenvalues with the same partial multiplicities as $P(\la)$.
 Furthermore, $L(\lambda)$ is a strong linearization of $P(\la)$ in the GLR-sense if $L(\la)$ is a GLR-linearization of $P(\la)$ and $ \text{rev}_1L(\lambda)$ is a GLR-linearization of $\text{rev}_d P(\lambda)$. Then, a GLR-strong linearization has the same finite and infinite eigenvalues with the same partial multiplicities as $P(\la)$. However, the minimal indices of a GLR (strong) linearization $L(\la)$ may be completely unrelated to those of $P(\la)$ \cite[Section 4]{DeTDM}, except for the fact that the number of left (resp. right) minimal indices of $L(\la)$ and $P(\la)$ are equal. Nevertheless, the GLR-strong linearizations that are used in practice have minimal indices that are simply related to those of the polynomial through addition of a constant shift (see \cite{DLPV18} and the references therein).

In order to linearize a rational matrix $R(\la)\in \mathbb{C}(\la)^{m \times n}$, we consider in this paper {\em linear} polynomial system matrices of $R(\la)$ \cite{Ros70}. This means that we consider block partitioned pencils
\begin{equation}\label{eq:linearsystemmat}
L(\lambda):= \left[\begin{array}{ccc} \la A_1 -A_0 & - \la B_1 + B_0 \\ \la C_1 -C_0 &  \la D_1-D_0 \end{array}\right] =: \begin{bmatrix}
  A(\la) & -B(\la)\\
  C(\la) & D(\la)
  \end{bmatrix} \in \mathbb{C}[\la]^{(p+m) \times (p+n)},
 \end{equation}
 where $A(\la)\in \mathbb{C}[\la]^{p\times p}$ is regular and the Schur complement of $A(\la)$ in $L(\la)$ is the rational matrix $R(\la)$, i.e., $R(\la) =D(\la)+C(\la)A( \la)^{-1}B(\la)$. In this situation, it is  also said that $R(\la)$ is the transfer function matrix of $L(\la).$  These pencils are particular instances of Rosenbrock's polynomial system matrices \cite{Ros70}, which may have any degree.

A linear polynomial system matrix $L(\la)$ as in \eqref{eq:linearsystemmat} contains the {\em finite} zero and pole structures of its transfer function matrix $R(\la)$ provided that $L(\la)$ satisfies the following minimality conditions. $L(\la)$ is minimal if the matrices
 \begin{equation}  \label{minimalset}
 \left[\begin{array}{ccc} \la A_1 -A_0  & - \la B_1 + B_0  \end{array}\right], \quad \left[\begin{array}{ccc} \la A_1 -A_0  \\ \la C_1 - C_0   \end{array}\right],
 \end{equation}
 have, respectively, full row and column rank for all $\la_0\in\mathbb{C}$. This is equivalent to state that the pencils in \eqref{minimalset} do not have finite eigenvalues. Then we have the following result.

  \begin{theorem}\cite{Ros70}\label{prop:finite} Let $R(\la)$ be the transfer function matrix of $L(\la)$ in \eqref{eq:linearsystemmat}. Let $\la_0\in\mathbb{C}$. If $L(\la)$ is minimal then
 		\begin{enumerate}
 		\item the zero structure of $R(\la)$ at $\la_0$ is the same as the zero structure of $L(\la)$ at $\la_0$, and
 		\item the pole structure of $R(\la)$ at $\la_0$ is the same as the zero structure of $\la A_1 -A_0$ at $\la_0$.
        \end{enumerate}
 \end{theorem}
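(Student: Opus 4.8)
The plan is to reduce, via transformations that are invertible (``regular'') at $\la_0$ and that preserve both the minimality conditions in \eqref{minimalset} and the local zero and pole structures of $L(\la)$, to a normalized situation in which the two claims become a pair of Schur-complement identities. Throughout, write $M\sim_{\la_0}N$ if $M(\la)=E(\la)N(\la)F(\la)$ with $E,F$ rational and regular at $\la_0$; such equivalences preserve local Smith--McMillan forms, hence local zero and pole structures. I will use repeatedly that the following operations on the partitioned $L(\la)$ change the transfer function $R(\la)$ only by multiplying it on the left or right by the corresponding block of the transformation (and not at all when that block is an identity), and keep the rank conditions in \eqref{minimalset} intact, provided all matrices involved are regular at $\la_0$: left/right multiplication by block-diagonal matrices compatible with the partition, and addition of matrix multiples of the first ($A$-)block rows or columns to the remaining block rows or columns.

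First I would bring $A(\la)$ to its local Smith form at $\la_0$: writing $A(\la)=U(\la)\,\diag{(\la-\la_0)^{a_1},\ldots,(\la-\la_0)^{a_p}}\,V(\la)$ with $U,V$ regular at $\la_0$ and $0\le a_1\le\cdots\le a_p$, replacing $L(\la)$ by $\diag{U^{-1},I}\,L(\la)\,\diag{V^{-1},I}$ makes the $(1,1)$ block diagonal. The diagonal entries equal to $1$ (those with $a_i=0$) can then be used, by regular-at-$\la_0$ row and column operations, to clear the corresponding rows of $B$ and columns of $C$, after which that piece decouples as a trivial identity summand and one is left with all exponents $\ge1$, i.e. $A(\la_0)=0$. (If $A(\la_0)$ is already invertible, then directly $L\sim_{\la_0}I\oplus R$ with $R$ analytic at $\la_0$, and both claims are immediate.) Since minimality now forces $B(\la_0)$ to have full row rank and $C(\la_0)$ full column rank, a further regular-at-$\la_0$ pre- and post-multiplication normalizes $B(\la)=\begin{bmatrix}I & 0\end{bmatrix}$ and $C(\la)=\begin{bmatrix}I\\0\end{bmatrix}$. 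Thus it suffices to treat
\[
L(\la)=\begin{bmatrix} A & -I & 0\\ I & D_{11} & D_{12}\\ 0 & D_{21} & D_{22}\end{bmatrix},\qquad
R(\la)\sim_{\la_0}\begin{bmatrix} A^{-1}+D_{11} & D_{12}\\ D_{21} & D_{22}\end{bmatrix},
\]
with $A(\la)=\diag{(\la-\la_0)^{a_1},\ldots,(\la-\la_0)^{a_p}}$, $A(\la_0)=0$, the $D_{ij}$ analytic at $\la_0$, and the zero structure of this $A$ at $\la_0$ equal to that of the original $\la A_1-A_0$ at $\la_0$.

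Set $P:=I+D_{11}A$; since $P(\la_0)=I$, $P$ is regular at $\la_0$, and since $A(\la_0)=0$ the products $AP^{-1}$, $D_{21}AP^{-1}$, $AP^{-1}D_{12}$ are analytic at $\la_0$ and vanish there. Using the unit blocks $I$ in positions $(2,1)$ and $(1,2)$ to block-triangularize $L$ by regular-at-$\la_0$ operations yields $L(\la)\sim_{\la_0}P\oplus(-I)\oplus D_{22}'$ with $D_{22}':=D_{22}-D_{21}AP^{-1}D_{12}$, so the zero structure of $L$ at $\la_0$ is that of $D_{22}'$. On the other side, $D_{22}'$ is the Schur complement of $A^{-1}+D_{11}$ in the second matrix above, because $(A^{-1}+D_{11})^{-1}=A(I+D_{11}A)^{-1}=AP^{-1}$; the block-triangular factors of this decomposition have the vanishing-at-$\la_0$ blocks $D_{21}AP^{-1}$ and $AP^{-1}D_{12}$ off the diagonal, hence are regular at $\la_0$, so $R(\la)\sim_{\la_0}(A^{-1}+D_{11})\oplus D_{22}'$. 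Finally $(A^{-1}+D_{11})^{-1}=AP^{-1}\sim_{\la_0}A$, so $A^{-1}+D_{11}$ has local Smith--McMillan form $\diag{(\la-\la_0)^{-a_1},\ldots,(\la-\la_0)^{-a_p}}$ --- it carries the pole structure $(a_1,\ldots,a_p)$ and no zeros at $\la_0$ --- while $D_{22}'$ is analytic at $\la_0$. Merging the structural-index lists of the two direct sums gives both assertions: the pole structure of $R$ at $\la_0$ is $(a_1,\ldots,a_p)$, i.e. the zero structure of $\la A_1-A_0$ at $\la_0$; and the zero structure of $R$ at $\la_0$ is that of $D_{22}'$, i.e. that of $L$ at $\la_0$.

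The main obstacle is the reduction paragraph: one has to verify scrupulously that every transformation used is at once regular at $\la_0$ (so it preserves local Smith--McMillan forms) and a legitimate system equivalence (so it changes the transfer function only by regular-at-$\la_0$ factors), and that the rank conditions in \eqref{minimalset} are inherited at each step. The conceptual point is that minimality together with the local normalization of $A$ is exactly what forces $B(\la_0)$ and $C(\la_0)$ to be full rank and, since $A(\la_0)=0$, forces all the coupling products entering $P$, $D_{22}'$ and the block-triangular factors to vanish at $\la_0$; granting that, the two Schur-complement computations and the index bookkeeping are routine.
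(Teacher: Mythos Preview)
The paper does not prove this theorem; it is stated with a citation to Rosenbrock \cite{Ros70} and no argument is supplied. So there is no ``paper's own proof'' to compare against.

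Your proof is correct and is, in essence, the classical argument: bring $A(\la)$ to its local Smith form at $\la_0$, peel off the locally invertible part (which contributes neither zeros nor poles), use minimality at $\la_0$ to normalize $B$ and $C$ to $[\,I\ \ 0\,]$ and $\left[\begin{smallmatrix} I\\0\end{smallmatrix}\right]$, and then read off both claims from two Schur-complement identities sharing the same complement $D_{22}'=D_{22}-D_{21}AP^{-1}D_{12}$ with $P=I+D_{11}A$. The key observations---that $P(\la_0)=I$ because $A(\la_0)=0$, that consequently all block-triangular factors are regular at $\la_0$, and that $(A^{-1}+D_{11})^{-1}=AP^{-1}\sim_{\la_0}A$ so this block carries exactly the pole structure and no zeros---are all verified correctly. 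The final merge of structural-index lists for the direct sum $(A^{-1}+D_{11})\oplus D_{22}'$ is legitimate because the two summands have disjoint sign ranges for their indices (strictly negative versus nonnegative).

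Two small points worth tightening in a write-up: first, the normalizations of $B$ and $C$ should be done by acting on the \emph{second} block column and the \emph{second} block row respectively (right- and left-multiplication by block-diagonal matrices $\diag{I,\,\cdot\,}$), so that $A$ is untouched and remains diagonal---your narrative is compatible with this but does not say so explicitly. Second, after these normalizations the $D_{ij}$ are rational and analytic at $\la_0$ rather than polynomial; you use only analyticity, so nothing breaks, but it is worth saying. Neither point affects correctness. Note also that nowhere do you use that $A,B,C,D$ are pencils; your argument proves Rosenbrock's result for polynomial system matrices of arbitrary degree.
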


It is very easy to prove that the number of left (resp. right) minimal indices of a minimal polynomial system matrix is equal to the number of left (resp. right) minimal indices of its transfer function matrix, though their values may be different \cite{VVK79,ADMZ2}.

\begin{remark} \label{rem:minimal-glr} \rm We can combine Theorem \ref{prop:finite} applied to a polynomial matrix $P(\la)$ and the equality of the number of the minimal indices of $L(\la)$ and $P(\la)$ with \cite[Theorem 4.1]{DeTDM} for proving that {\em any minimal linear polynomial system matrix of a polynomial matrix $P(\la)$ is always a GLR-linearization of $P(\la)$}. The reverse result is not true in general. Observe also that any minimal polynomial system matrix of a polynomial matrix $P(\la)$ must have the block $A(\la)$ in \eqref{eq:linearsystemmat} unimodular, because $P(\la)$ does not have finite poles.\end{remark}

The minimal linear polynomial system matrices of an arbitrary rational matrix $R(\la)$ are particular cases of the linearizations of $R(\la)$ defined in \cite[Definition 3.2]{ADMZ}, which were introduced with the idea of combining the concept of minimal polynomial system matrix with the extension of GLR-linearizations from polynomial to rational matrices.

Notice that Theorem \ref{prop:finite} does not provide information about the structure at infinity. The recovering of this structure requires the following concept: $L(\la)$ in \eqref{eq:linearsystemmat} is minimal at infinity \cite{DMQVD} if the matrices \begin{equation} \label{condlocalinf}  \left[\begin{array}{cc} A_1 & {\color{blue}-} B_1  \end{array}\right] \quad \mathrm{and} \quad
 \left[\begin{array}{c} A_1  \\ C_1 \end{array}\right]
 \end{equation} have, respectively, full row and column rank. This condition is equivalent to state that the pencils in \eqref{minimalset} have degree exactly $1$ and do not have eigenvalues at $\infty$. Then we have the next result that follows from \cite{Ver81} and \cite[Section 3]{DQV}.

   \begin{theorem}\label{prop:infinity} Let $R(\la)$ be the transfer function matrix of $L(\la)$ in \eqref{eq:linearsystemmat}. If $L(\la)$ is minimal at $\infty$ then
   		\begin{enumerate}
   		\item the zero structure of $R(\la)$ at infinity is the same as the zero structure of $L(\la)$ at infinity, and
   		\item the polar structure of $R(\la)$ at infinity is the same as the zero structure of the pencil
   				\begin{equation}\label{larger}
   		 \left[\begin{array}{ccc} \la A_1 -A_0 & - \la B_1 & 0  \\ \la C_1  &  \la D_1  & -I_m \\
   		0 & I_n & 0 \end{array}\right]
   				\end{equation} at infinity.
   \end{enumerate}
 \end{theorem}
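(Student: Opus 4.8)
The plan is to transfer the analysis from the point at infinity to the origin by the substitution $\la\mapsto 1/\mu$ combined with the degree‑one reversal, and then to apply Rosenbrock's local result, Theorem~\ref{prop:finite}, at $\mu_0=0$; this is essentially the route of \cite{Ver81} and \cite[Section~3]{DQV}. Set $\widehat L(\mu):=\text{rev}_1 L(\mu)=\mu\,L(1/\mu)$, so that
\[
\widehat L(\mu)=\left[\begin{array}{cc} A_1-\mu A_0 & -(B_1-\mu B_0)\\ C_1-\mu C_0 & D_1-\mu D_0\end{array}\right]=:\left[\begin{array}{cc}\widehat A(\mu) & -\widehat B(\mu)\\ \widehat C(\mu) & \widehat D(\mu)\end{array}\right].
\]
The block $\widehat A(\mu)=A_1-\mu A_0=\text{rev}_1 A(\mu)$ is regular, since $\det(A_1-\mu A_0)=\mu^{p}\det A(1/\mu)\not\equiv0$, and a one‑line Schur‑complement computation gives $\widehat D(\mu)+\widehat C(\mu)\widehat A(\mu)^{-1}\widehat B(\mu)=\mu\,R(1/\mu)=:S(\mu)$; thus $\widehat L(\mu)$ is a linear polynomial system matrix whose transfer function is $S(\mu)$. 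Moreover, the rank conditions \eqref{condlocalinf} that define minimality of $L(\la)$ at infinity are exactly the conditions \eqref{minimalset} that $\widehat L(\mu)$ be minimal at the point $\mu_0=0$.

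Granting this, item~1 is a short computation. By (the local form of) Theorem~\ref{prop:finite}(1) applied to the system matrix $\widehat L(\mu)$ of $S(\mu)$ at $\mu_0=0$ — only the rank conditions at that single point are used — the zero structure of $S(\mu)$ at $0$ equals the zero structure of $\widehat L(\mu)$ at $0$. Now $R(1/\mu)=\mu^{-1}S(\mu)$ and $L(1/\mu)=\mu^{-1}\widehat L(\mu)$, so the local Smith--McMillan forms at $0$ of $R(1/\mu)$ and $L(1/\mu)$ are obtained by subtracting $1$ from every structural index of $S(\mu)$ and of $\widehat L(\mu)$, respectively. A structural index still contributes to the \emph{zero} structure after this shift precisely when it was $\geq2$, and the multiset of structural indices that are $\geq2$ is, for either matrix, a sub‑multiset of its zero structure at $0$; since these zero structures agree, so do the resulting zero structures at $0$ of $R(1/\mu)$ and $L(1/\mu)$, which by definition are the zero structures at infinity of $R(\la)$ and $L(\la)$. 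This proves item~1.

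For item~2, the same reduction and Theorem~\ref{prop:finite}(2) give that the \emph{pole} structure of $S(\mu)=\mu\,R(1/\mu)$ at $0$ equals the zero structure of $\text{rev}_1 A(\mu)=A_1-\mu A_0$ at $0$. The difficulty — and the reason the larger pencil is needed — is that moving from the pole structure of $S(\mu)$ at $0$ to that of $R(1/\mu)=\mu^{-1}S(\mu)$ at $0$ is \emph{not} the plain shift used in item~1: multiplication by $\mu^{-1}$ additionally promotes every zero structural index of $S(\mu)$ equal to $0$ to a pole of order $1$, so the polar structure of $R(\la)$ at infinity cannot be read off from $A_1-\mu A_0$ alone; one must simultaneously keep track of the normal rank and of the zeros of $S(\mu)$. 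The augmented pencil \eqref{larger} is precisely the device that performs this accounting: its leading coefficient is the leading coefficient of $L(\la)$ bordered by zero blocks, while its constant part supplies the invertible corner $\left[\begin{array}{cc}0 & I_m\\ -I_n & 0\end{array}\right]$, and the bordering introduces only the full‑rank identity blocks $I_m$ and $I_n$. One then checks that \eqref{larger} is again minimal at infinity, and that recovering its zero structure at infinity — once more by the reversal/Theorem~\ref{prop:finite} machinery — returns exactly the polar structure of $R(\la)$ at infinity, with no spurious eigenvalues at infinity.

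This last verification is where essentially all of the content of the statement resides, and it is the step I expect to be the main obstacle: the precise block shape of \eqref{larger} must be matched against the ``raise every partial multiplicity by $1$, and add one pole of order~$1$ per rank defect'' correction that relates the polar structures of $R(\la)$ and of $S(\mu)=\mu R(1/\mu)$. This is exactly the analysis of the structure at infinity of generalized state‑space (pencil) system matrices carried out in \cite{Ver81}, and it can also be extracted from \cite[Section~3]{DQV}; item~1, by comparison, is routine bookkeeping once the reversal identifications of the first paragraph are in hand.
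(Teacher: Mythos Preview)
The paper does not give its own proof of this statement; it merely records it as a consequence of \cite{Ver81} and \cite[Section~3]{DQV}. Your proposal follows exactly that route---reversal $\la\mapsto1/\mu$, recognition that minimality at infinity becomes minimality of $\widehat L(\mu)$ at $\mu_0=0$, and then an appeal to the local form of Theorem~\ref{prop:finite}---so you are doing precisely what the paper points to.

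Your bookkeeping for item~1 is correct: equality of the zero structures of $S(\mu)$ and $\widehat L(\mu)$ at $0$ means equality of their multisets of strictly positive structural indices, hence of those $\ge 2$, which is all the shift by $-1$ needs. One small caveat: Theorem~\ref{prop:finite} as stated assumes minimality at \emph{all} finite points, whereas you use only the rank condition at $\mu_0=0$; this local refinement is standard (and implicit in \cite{Ros70,AmMaZa15,DMQVD}), but you should flag it explicitly. For item~2 you correctly isolate why the augmented pencil \eqref{larger} is required---the $\mu^{-1}$ shift creates new order\nobreakdash-$1$ poles from zero structural indices---and then defer to \cite{Ver81,DQV} for the verification. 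That is no more and no less than the paper does, so your treatment is adequate for the purpose here; if you later want a self-contained argument, the computation in \cite[Section~3]{DQV} is the place to extract it.
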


The polar structure of $R(\la)$ at $\infty$ can also be recovered without considering the extended pencil in \eqref{larger}. In particular, both the zero and polar structures of $R(\la)$ at infinity can be obtained from the eigenvalue structures of the pencils $L(\la)$ and $A(\la)$ at infinity as Theorem \ref{theo:recover_infinity} shows. We emphasize that the hypothesis of minimality at $\infty$ used in Theorem \ref{theo:recover_infinity} implies that $L(\la)$ has degree $1$. However, $A(\la) = \la A_1 - A_0$ might have degree $0$ if $A_1= 0$. In any case, we understand that $\mathrm{rev}_1 A(\la) =  A_1 - \la A_0$.

\begin{theorem}\label{theo:recover_infinity} \cite[Theorem 3.13]{DMQVD} Let $R(\la)$ be the transfer function matrix of $L(\la)$ in \eqref{eq:linearsystemmat}. Assume that $R(\la)$ has normal rank $r$. Let $0 <e_1\leq\cdots\leq e_s$ be the partial multiplicities of $\mathrm{rev}_1 A(\la)$ at $0$ and let $0<\widetilde{e}_1\leq\cdots\leq\widetilde{e}_u$ be the partial multiplicities of $\mathrm{rev}_1 L(\la)$ at $0.$ If $L(\la)$ is minimal at $\infty$ then the structural indices at infinity $d_{1}\leq\cdots\leq d_{r}$ of $R(\la)$ are
\begin{equation*}
	(d_{1}, d_{2},\ldots , d_{r})=(-e_{s},-e_{s-1},\ldots ,-e_1,\underbrace{0,\ldots,0}_{r-s-u},\widetilde{e}_{1}, \widetilde{e}_{2},\ldots, \widetilde{e}_{u}) - (1,1,\ldots, 1).
	\end{equation*}
\end{theorem}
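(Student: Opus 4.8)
The plan is to reduce the statement about the structure at infinity to the finite result Theorem~\ref{prop:finite}, applied at the point $0$ to the reversal pencil $\text{rev}_1 L(\la)$, by means of the change of variable $\la=1/\mu$. By the definition of the structure at infinity, the structural indices of $R(\la)$ at infinity are exactly the structural indices at $\mu_0=0$ of $R(1/\mu)$, so it suffices to analyse the latter rational matrix.

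The first step is to identify $R(1/\mu)$, up to a scalar factor, with a transfer function matrix. The pencil $\text{rev}_1 L(\mu)=\mu\,L(1/\mu)$ is again a linear polynomial system matrix of the form \eqref{eq:linearsystemmat}: its four blocks are $\widehat A(\mu):=\text{rev}_1 A(\mu)=A_1-\mu A_0$, $\widehat B(\mu):=\text{rev}_1 B(\mu)$, $\widehat C(\mu):=\text{rev}_1 C(\mu)$ and $\widehat D(\mu):=\text{rev}_1 D(\mu)$, and $\widehat A(\mu)$ is regular because the reversal of a regular pencil is regular (this holds also in the degenerate case $A_1=0$, consistently with the convention $\text{rev}_1 A(\la)=A_1-\la A_0$). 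Using the identities $X(1/\mu)=\mu^{-1}\widehat X(\mu)$ for $X\in\{A,B,C,D\}$ and $A(1/\mu)^{-1}=\mu\,\widehat A(\mu)^{-1}$, a direct computation gives $R(1/\mu)=\mu^{-1}\widehat R(\mu)$, where $\widehat R(\mu):=\widehat D(\mu)+\widehat C(\mu)\widehat A(\mu)^{-1}\widehat B(\mu)$ is the transfer function matrix of $\text{rev}_1 L(\mu)$. Since $\mu^{-1}$ is a nonzero scalar of $\BC(\mu)$, the matrix $\widehat R(\mu)$ still has normal rank $r$, and multiplication by $\mu^{-1}$ lowers by one each exponent of the local Smith--McMillan form at $0$; hence the structural indices of $R(\la)$ at infinity are those of $\widehat R(\mu)$ at $0$, decreased by $(1,1,\ldots,1)$. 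It thus remains to compute the local Smith--McMillan form of $\widehat R(\mu)$ at $0$.

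For this I would apply Theorem~\ref{prop:finite} to $\text{rev}_1 L(\mu)$ at $\mu_0=0$. The key observation is that the two rank conditions \eqref{minimalset} for the pencil $\text{rev}_1 L(\mu)$, evaluated at $\mu_0=0$, reduce to ``$[\,A_1\ \, -B_1\,]$ has full row rank and the matrix with block rows $A_1$, $C_1$ has full column rank'', i.e.\ precisely to the minimality-at-infinity conditions \eqref{condlocalinf} of $L(\la)$, which hold by hypothesis. Hence (the local form, at $\mu_0=0$, of) Theorem~\ref{prop:finite} gives that the zero structure of $\widehat R(\mu)$ at $0$ equals the zero structure of $\text{rev}_1 L(\mu)$ at $0$, namely $(\widetilde e_1,\ldots,\widetilde e_u)$, while the pole structure of $\widehat R(\mu)$ at $0$ equals the zero structure of $\widehat A(\mu)=\text{rev}_1 A(\mu)$ at $0$, namely $(e_1,\ldots,e_s)$. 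Since $\widehat R(\mu)$ has exactly $r$ structural indices at $0$, of which the $s$ negative ones are $-e_s\le\cdots\le-e_1$ and the $u$ positive ones are $\widetilde e_1\le\cdots\le\widetilde e_u$, the remaining $r-s-u$ must vanish; thus the structural indices of $\widehat R(\mu)$ at $0$ are $(-e_s,\ldots,-e_1,0,\ldots,0,\widetilde e_1,\ldots,\widetilde e_u)$. Decreasing this list by $(1,1,\ldots,1)$ gives the claimed formula for $(d_1,\ldots,d_r)$, and the resulting list is non-decreasing because $e_i\ge 1$ and $\widetilde e_i\ge 1$.

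I expect the one genuinely delicate point to be the use of Theorem~\ref{prop:finite} in its \emph{local} form: under the sole hypothesis of minimality of $L(\la)$ at $\infty$, the reversal $\text{rev}_1 L(\mu)$ need not be minimal at every finite $\mu_0\neq 0$, so one cannot quote the global statement verbatim and must instead invoke the (well known, and local) fact that Rosenbrock's recovery of the zero and pole structures at a point $\mu_0$ uses only full rank of the matrices in \eqref{minimalset} at that same point $\mu_0$ --- which at $\mu_0=0$ is exactly \eqref{condlocalinf}. The remaining ingredients --- the scalar identity $R(1/\mu)=\mu^{-1}\widehat R(\mu)$, the effect of the factor $\mu^{-1}$ on the local Smith--McMillan exponents, and the bookkeeping for the degenerate case $A_1=0$ --- are routine.
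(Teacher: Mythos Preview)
The paper does not prove this theorem; it is simply quoted from \cite[Theorem~3.13]{DMQVD} without argument, so there is no ``paper's own proof'' to compare against. Your approach is correct and is essentially the natural one: reverse the pencil, identify $R(1/\mu)=\mu^{-1}\widehat R(\mu)$ with $\widehat R$ the transfer function of $\mathrm{rev}_1 L$, and read off the local Smith--McMillan structure of $\widehat R$ at $0$ from the zero structures of $\mathrm{rev}_1 L$ and $\mathrm{rev}_1 A$ at $0$. You have also correctly flagged the only nontrivial issue, namely that Theorem~\ref{prop:finite} must be invoked in its \emph{local} form at $\mu_0=0$ (since $\mathrm{rev}_1 L$ need not be minimal elsewhere), and that this local hypothesis is exactly \eqref{condlocalinf}. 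This local version of Rosenbrock's result is precisely what the cited reference \cite{DMQVD} develops, so your argument is in the same spirit as the original source.
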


A linear polynomial system matrix that is minimal (at finite points) and also minimal at $\infty$ is called {\em strongly minimal} \cite{DMQVD,DQV}. Related to this concept we present the following definitions, which have been introduced in \cite[Section 3]{DQV} for polynomial system matrices of any degree.

\begin{definition}\label{min}
	 A linear polynomial system matrix $L(\la)$ as in \eqref{eq:linearsystemmat} is said to be {\em strongly E-controllable} and {\em strongly E-observable}, respectively, if the pencils
	\begin{equation}  \label{condlocal} \left[\begin{array}{cc} A(\la) & {\color{blue}-}B(\la)  \end{array}\right], \quad \mathrm{and} \quad
	\left[\begin{array}{c} A(\la)  \\ C(\la)  \end{array}\right],
	\end{equation}
	have degree exactly $1$ and have no finite or infinite eigenvalues. If both conditions are satisfied $L(\la)$ is said to be {\em strongly minimal}.
\end{definition}

The letter E in the definition of strong E-controllability and E-observability refers to the condition of the matrices in \eqref{condlocal} not having {\em eigenvalues}, finite or infinite, and emphasizes the differences with the concepts of ``strong controlability, observability and irreducibility'' used in \cite{VVK79,Ver81,DQV}. As mentioned before, the degree $1$ pencils in \eqref{condlocal} do not have infinite eigenvalues if and only if the matrices in \eqref{condlocalinf} have full row and full column rank, respectively. The ranks of the matrices in \eqref{condlocalinf} will be also called the {\em ranks at infinity} of the pencils in \eqref{condlocal}, even in the case the matrices in \eqref{condlocalinf} do not have full ranks.

Next, we introduce formally the definition of strongly minimal linearization of a rational matrix, which is fundamental in this work. This definition is implicit in \cite{DQV}.

\begin{definition} \label{def.maquintstrongmin}   Let $R(\la)\in\BC(\la)^{m\times n}$ be a rational matrix. A linear polynomial system matrix $L(\la)$ as in \eqref{eq:linearsystemmat} is said to be {\em a strongly minimal linearization of $R(\la)$} if $L(\la)$ is strongly minimal and its transfer function matrix is $R(\la)$. Equivalently, $\{A(\la), B(\la), C(\la), D(\la)\}$ is said to be {\em a strongly minimal linear realization} of $R(\la)$.\end{definition}

Strongly minimal linearizations $L(\la)$ of a rational matrix $R(\la)$ have been defined with the goal of constructing pencils that allow us to recover the complete pole and zero structures of $R(\la)$ through Theorems \ref{prop:finite} and \ref{theo:recover_infinity}, or \ref{prop:infinity}. Surprisingly, the condition of strong minimality implies that the minimal indices of $L(\la)$ and $R(\la)$ are the same. This is proved in Theorem \ref{prop:minimalindices}, which, together with Theorems \ref{prop:finite} and \ref{theo:recover_infinity}, {\em allows us to recover the complete list of structural data of a rational matrix from any of its strongly minimal linearizations}.

 \begin{theorem}\label{prop:minimalindices}  Let $L(\la)$ be a  strongly minimal linearization of a rational matrix $R(\la)$. Then the left and right minimal indices of $R(\la)$ are the same as the left and right minimal indices of $L(\la)$.
 \end{theorem}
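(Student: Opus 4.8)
The plan is to relate the right null-space of $L(\la)$ to the right null-space of $R(\la)$ via an explicit linear-algebraic correspondence between polynomial vectors, and then argue that this correspondence preserves minimality of bases; the left null-space is handled by a symmetric argument (or by transposition). Write a generic element of $\mathcal N_r(L)$ as $\begin{bmatrix} u(\la) \\ v(\la)\end{bmatrix}$ with $u(\la)\in\BC[\la]^{p\times 1}$, $v(\la)\in\BC[\la]^{n\times1}$, so that $A(\la)u(\la) - B(\la)v(\la)=0$ and $C(\la)u(\la)+D(\la)v(\la)=0$. Since $A(\la)$ is regular we get $u(\la)=A(\la)^{-1}B(\la)v(\la)$ (a rational vector), and substituting into the second block equation gives $\bigl(D(\la)+C(\la)A(\la)^{-1}B(\la)\bigr)v(\la) = R(\la)v(\la)=0$; hence $v(\la)\in\mathcal N_r(R)$. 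Conversely, given $v(\la)\in\mathcal N_r(R)$, set $u(\la):=A(\la)^{-1}B(\la)v(\la)$; the two block equations then hold, but a priori $u(\la)$ is only rational. The first key step is to show $u(\la)$ is in fact polynomial: this is exactly where strong E-controllability enters, because $\begin{bmatrix}A(\la) & -B(\la)\end{bmatrix}$ having no finite eigenvalues means $A(\la)u(\la)=B(\la)v(\la)$ with $v$ polynomial forces $u$ to have no finite poles (at any $\la_0$, $\begin{bmatrix}A(\la_0)&-B(\la_0)\end{bmatrix}$ has full row rank, and one checks via the local Smith form that $A(\la)^{-1}B(\la)$ is then analytic at $\la_0$, or more directly that the partial fraction/Laurent expansion of $u$ at $\la_0$ must vanish). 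Thus $v\mapsto \begin{bmatrix}A(\la)^{-1}B(\la)v \\ v\end{bmatrix}$ is a bijection between $\mathcal N_r(R)$ and $\mathcal N_r(L)$ that is $\BC(\la)$-linear.

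The second key step is a degree bookkeeping argument showing this bijection does not change the degree of a null-vector. Here I must control the degree of $u(\la)=A(\la)^{-1}B(\la)v(\la)$ in terms of $\deg v$. The point is that strong E-controllability also forbids eigenvalues at infinity: $\begin{bmatrix}A_1 & -B_1\end{bmatrix}$ has full row rank. Reversing, $\mathrm{rev}_1 L$ and the reversed pencil $\begin{bmatrix}\mathrm{rev}_1 A(\la) & -\mathrm{rev}_1 B(\la)\end{bmatrix}$ have no zero at $0$, so the same analyticity argument applied at $\infty$ to $R(1/\la)$ shows that $\deg u \le \deg v$ — indeed $u(\la)/\la^{\deg v}$ stays bounded as $\la\to\infty$ — while the block structure of $L$ shows trivially $\deg v \le \deg\begin{bmatrix}u\\v\end{bmatrix} = \max(\deg u,\deg v)$. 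Hence each vector in $\mathcal N_r(L)$ has the same degree as its image $v$ in $\mathcal N_r(R)$. (One should double-check the corner case $\deg v = 0$, i.e. constant null-vectors, and that $u$ cannot pick up spurious degree from cancellation; strong E-controllability is exactly what rules this out.)

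Putting the two steps together finishes the proof: a minimal basis $\{v_i(\la)\}$ of $\mathcal N_r(R)$ maps to a basis $\{\begin{bmatrix}A^{-1}Bv_i \\ v_i\end{bmatrix}\}$ of $\mathcal N_r(L)$ with $\sum_i \deg\begin{bmatrix}A^{-1}Bv_i\\v_i\end{bmatrix} = \sum_i \deg v_i$, and conversely any basis of $\mathcal N_r(L)$ projects (delete the top $p$ entries) to a basis of $\mathcal N_r(R)$ with the same sum of degrees; so the minimal sum-of-degrees, and in fact the whole ordered list of degrees (using the standard fact that the degree list of a minimal basis is an invariant of the subspace, together with the degree-preservation just established), coincide. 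The left minimal indices are obtained by applying the identical argument to $L(\la)^T$, whose transfer function is $R(\la)^T$ and which is strongly minimal because strong E-controllability of $L$ is strong E-observability of $L^T$ and vice versa, so $\mathcal N_\ell(R)$ and $\mathcal N_\ell(L)$ have the same minimal indices. I expect the main obstacle to be the rigorous justification that $u(\la)=A(\la)^{-1}B(\la)v(\la)$ is polynomial of degree at most $\deg v$: this is the sole place where strong minimality (as opposed to mere minimality) is used, and it requires a careful local analysis at finite points and at infinity via the absence of eigenvalues of the pencils in \eqref{condlocal}.
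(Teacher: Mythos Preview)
Your overall strategy is the right one and is essentially what the paper's cited references \cite{DQV,Ver81} unpack (the paper itself simply quotes those results). However, there is a concrete error: you have swapped the roles of strong E-controllability and strong E-observability.

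Your ``first key step'' asserts that full row rank of $\begin{bmatrix}A(\la_0) & -B(\la_0)\end{bmatrix}$ at every finite $\la_0$ forces $A(\la)^{-1}B(\la)$ to be analytic at $\la_0$, so that $u=A^{-1}Bv$ is polynomial whenever $v$ is. This is false. Take $p=m=1$, $n=2$, $A(\la)=\la$, $B(\la)=\begin{bmatrix}1 & 0\end{bmatrix}$, $C(\la)=\la$, $D(\la)=\begin{bmatrix}0 & 1\end{bmatrix}$. Then $\begin{bmatrix}A(\la) & -B(\la)\end{bmatrix}=\begin{bmatrix}\la & -1 & 0\end{bmatrix}$ has full row rank for all $\la\in\BC$ and at infinity, so this system is strongly E-controllable. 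The transfer function is $R(\la)=\begin{bmatrix}1 & 1\end{bmatrix}$, and $v=\begin{bmatrix}1\\-1\end{bmatrix}\in\mathcal N_r(R)$ is polynomial, yet $u=A^{-1}Bv=1/\la$ is not. What fails in this example is E-observability: $\begin{bmatrix}A(\la)\\ C(\la)\end{bmatrix}=\begin{bmatrix}\la\\ \la\end{bmatrix}$ drops rank at $\la=0$.

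The correct argument uses the \emph{second} block equation as well. If $u=A^{-1}Bv$ had a pole of order $k\ge1$ at $\la_0$ with leading Laurent coefficient $u_0\ne0$, then from $A(\la)u(\la)=B(\la)v(\la)$ (polynomial) you get $A(\la_0)u_0=0$, and from $C(\la)u(\la)=-D(\la)v(\la)$ (also polynomial, since $Rv=0$) you get $C(\la_0)u_0=0$; hence $\begin{bmatrix}A(\la_0)\\ C(\la_0)\end{bmatrix}u_0=0$, and strong E-\emph{observability} forces $u_0=0$. The same reasoning at infinity (using full column rank of $\begin{bmatrix}A_1\\ C_1\end{bmatrix}$) gives $\deg u\le\deg v$. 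Thus observability drives the right-null-space argument; controllability is what you need for the left-null-space argument (or, equivalently, after transposing). Once you make this swap, your degree-preservation and Forney-criterion conclusion go through exactly as you wrote.
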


\begin{proof}
By \cite[Proposition 1]{DQV}, a strongly minimal linear polynomial system matrix is strongly irreducible according to the definition in \cite{Ver81}. Then, by \cite[Result 2]{Ver81}, the left and right minimal indices of $R(\la)$ and $L(\la)$ are the same.
\end{proof}

As we have seen in the proof of Theorem \ref{prop:minimalindices}, any strongly minimal linearization $L(\la)$ of a rational matrix $R(\la) \in \mathbb{C}(\la)^{m\times n}$ is a strongly irreducible polynomial system matrix of $R(\la)$ (see definition in \cite{Ver81}). Thus, \cite[Result 2]{Ver81} establishes a simple bijection between the left (resp. right) minimal bases of $L(\la)$ and those of $R(\la)$ that allows us to recover a left (resp. right) minimal basis of $R(\la)$ from any left (resp. right) minimal basis of $L(\la)$, and conversely, without any computational cost. We only state here the result for right minimal bases since for left minimal bases is analogous.

	\begin{theorem} \label{prop:basesmin} Let $L(\la)$ as in \eqref{eq:linearsystemmat} be a strongly minimal linearization of a rational matrix $R(\la)$. If the columns of $\begin{bmatrix}
	M_{1}(\la)\\ M_{2}(\la) \end{bmatrix}$, partitioned conformably to the blocks of $L(\la)$, form a right minimal basis for $L(\la)$ then the columns of $M_2(\la)$ form a right minimal basis for $R(\la)$. Conversely, if the columns of $M_2(\la)$ form a right minimal basis for $R(\la)$ then the columns of $\begin{bmatrix}
	A(\la)^{-1}B(\la)M_{2}(\la)\\ M_{2}(\la) \end{bmatrix}$ form a right minimal basis for $L(\la)$.
\end{theorem}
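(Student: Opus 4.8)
The quickest route is to recall that, by \cite[Proposition 1]{DQV}, a strongly minimal $L(\la)$ is strongly irreducible, so \cite[Result 2]{Ver81} already supplies a degree-preserving bijection between the minimal bases of $L(\la)$ and those of $R(\la)$; I would prefer, however, a self-contained argument that also makes the bijection explicit. Its starting point is the Schur-complement structure. If $v(\la)=\left[\begin{smallmatrix} x_1(\la)\\ x_2(\la)\end{smallmatrix}\right]\in\mathcal{N}_r(L)$, partitioned conformably to $L(\la)$, then $A(\la)x_1=B(\la)x_2$ and $C(\la)x_1+D(\la)x_2=0$; since $A(\la)$ is regular, the first equation forces $x_1=A(\la)^{-1}B(\la)x_2$, and substituting it into the second gives $R(\la)x_2=\bigl(D(\la)+C(\la)A(\la)^{-1}B(\la)\bigr)x_2=0$. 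Conversely, each $x_2\in\mathcal{N}_r(R)$ yields $\widehat{x}_2:=\left[\begin{smallmatrix} A(\la)^{-1}B(\la)x_2\\ x_2\end{smallmatrix}\right]\in\mathcal{N}_r(L)$. Hence the projection $\pi\colon v\mapsto x_2$ is a $\BC(\la)$-linear isomorphism of $\mathcal{N}_r(L)$ onto $\mathcal{N}_r(R)$ with inverse $\pi^{-1}\colon x_2\mapsto\widehat{x}_2$, and the theorem reduces to showing that $\pi$ and $\pi^{-1}$ carry polynomial bases to polynomial bases and preserve the ordered list of column degrees.

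The crux, and what I expect to be the main obstacle, is the following degree statement, which is the only place strong minimality is used: \emph{if $v=\left[\begin{smallmatrix} x_1\\ x_2\end{smallmatrix}\right]\in\mathcal{N}_r(L)$ with $x_2(\la)$ polynomial, then $x_1(\la)$ is polynomial and $\deg x_1\le\deg x_2$, so $\deg v=\deg x_2$.} For the polynomiality of $x_1$: since $L(\la)$ is strongly minimal, hence strongly E-observable, the pencil $\left[\begin{smallmatrix} A(\la)\\ C(\la)\end{smallmatrix}\right]$ has full column rank at every $\la_0\in\BC$, so all its invariant factors are constants, its Smith form over $\BC[\la]$ is $\left[\begin{smallmatrix} I\\ 0\end{smallmatrix}\right]$, and it admits a polynomial left inverse $G(\la)$; then $x_1=G(\la)\left[\begin{smallmatrix} B(\la)x_2\\ -D(\la)x_2\end{smallmatrix}\right]$ is polynomial. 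For the degree bound: strong E-observability also makes the leading coefficient $\left[\begin{smallmatrix} A_1\\ C_1\end{smallmatrix}\right]$ of full column rank, so for $x_1\ne 0$ the top-degree term of $\left[\begin{smallmatrix} A(\la)\\ C(\la)\end{smallmatrix}\right]x_1$ cannot cancel and $\deg\!\left(\left[\begin{smallmatrix} A(\la)\\ C(\la)\end{smallmatrix}\right]x_1\right)=1+\deg x_1$; since $B(\la)$ and $D(\la)$ are pencils, the equal vector $\left[\begin{smallmatrix} B(\la)x_2\\ -D(\la)x_2\end{smallmatrix}\right]$ has degree at most $1+\deg x_2$, whence $\deg x_1\le\deg x_2$. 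The delicate point buried here is precisely that $A(\la)^{-1}B(\la)M_2(\la)$ is automatically polynomial: this excludes ``spurious'' finite poles of $A(\la)$ surviving only in denominators, and it is a genuine consequence of strong minimality, false for general polynomial system matrices.

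Granting the degree statement, the rest is bookkeeping. In the forward direction, a right minimal basis of $L(\la)$ with columns $v^{(i)}=\left[\begin{smallmatrix} M_1^{(i)}\\ M_2^{(i)}\end{smallmatrix}\right]$ consists of polynomial vectors spanning $\mathcal{N}_r(L)$; applying $\pi$, the columns $M_2^{(i)}$ are polynomial vectors spanning $\mathcal{N}_r(R)$, and the degree statement gives $\deg M_2^{(i)}=\deg v^{(i)}$ for each $i$. Because $\pi^{-1}$ lifts \emph{any} polynomial basis of $\mathcal{N}_r(R)$ to a polynomial basis of $\mathcal{N}_r(L)$ with identical column degrees, the minimal attainable degree sums over $\mathcal{N}_r(L)$ and over $\mathcal{N}_r(R)$ coincide; hence $\{M_2^{(i)}\}$, having degree sum equal to that of a minimal basis of $L(\la)$, attains this minimum and is a right minimal basis of $R(\la)$ (this simultaneously recovers Theorem \ref{prop:minimalindices} for the right indices). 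The converse is the same argument read through $\pi^{-1}$: if the columns of $M_2(\la)$ form a right minimal basis of $R(\la)$, then $\pi^{-1}$ returns the polynomial basis of $\mathcal{N}_r(L)$ with columns $\left[\begin{smallmatrix} A(\la)^{-1}B(\la)M_2(\la)\\ M_2(\la)\end{smallmatrix}\right]$, whose column degrees equal those of $M_2(\la)$ and thus sum to the minimum, so it is a right minimal basis of $L(\la)$. The left-minimal-basis statement is identical, with strong E-controllability replacing strong E-observability.
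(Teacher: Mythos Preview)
Your proposal is correct. The paper itself does not give a standalone proof of this theorem: the paragraph preceding the statement simply notes that strong minimality implies strong irreducibility by \cite[Proposition~1]{DQV}, and then invokes \cite[Result~2]{Ver81} for the bijection between minimal bases. Your opening sentence reproduces exactly this citation-based argument, so you already match what the paper does.

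The self-contained argument you add is a genuine extra and is sound. The two key steps --- the existence of a polynomial left inverse of $\left[\begin{smallmatrix} A(\la)\\ C(\la)\end{smallmatrix}\right]$ from strong E-observability at finite points, and the degree bound $\deg x_1\le\deg x_2$ from the full column rank of $\left[\begin{smallmatrix} A_1\\ C_1\end{smallmatrix}\right]$ --- are precisely the ingredients underlying \cite[Result~2]{Ver81}, so you have in effect unpacked that reference. The final bookkeeping (equality of minimal degree sums via the degree-preserving bijection $\pi,\pi^{-1}$) is correct given the paper's definition of minimal basis as one of minimal total degree. What your approach buys over the paper's bare citation is an explicit, self-contained derivation that makes transparent exactly where each half of strong E-observability (finite and infinite) is used; the cost is length.
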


\begin{remark} \rm Given $\la_0 \in \mathbb{C}$ with $\det A(\la_0)\neq 0$, it is easy to prove that the same recovery rules of Theorem \ref{prop:basesmin} hold for the bases of the left (resp. right) null space of the constant matrix $R(\la_0)$ from those of the left (resp. right) null space of the constant matrix $L(\la_0)$ for any linear polynomial system matrix $L(\la)$ as in \eqref{eq:linearsystemmat}, without imposing strong minimality (see \cite[Section 5.1]{DMQ}).  In the case of regular rational matrices, $\la_0\in \mathbb{C}$ is an eigenvalue of $R(\la)$ when it is a zero but not a pole, and the finite poles of $R(\la)$ are the finite zeros of $A(\la)$ if $L(\la)$ is minimal. Then, by assuming minimality on $L(\la)$, the previous rule allows us to recover the associated eigenvectors of $R(\la)$ from those of $L(\la)$.
\end{remark}

\begin{remark}
It follows from Theorems \ref{prop:finite}, \ref{prop:infinity} and \ref{prop:minimalindices} that, if $L(\la)$ is a strongly minimal linearization of a rational matrix $R(\la)$, then
	$$\delta_z(R)+\delta_\ell(R)+ \delta_r(R)=\delta_z(L)+\delta_\ell(L)+ \delta_r(L),$$
	and then from Theorem  \ref{degreesum} that $\delta_p(R)=\delta_p(L).$
	But the only pole of $L(\la):=\la L_{1} + L_0$ is the point at infinity and its polar degree is equal to $\rank{L_1}$ \cite[p. 126]{Van81}. Therefore, the McMillan degree $\delta_p(R)$ of $R(\la)$ equals the rank of $L_1$ for any strongly minimal linearization of $R(\la)$ and no other pencils with the same zero structure and the same left and right minimal indices as $R(\la)$ can have a first order coefficient with smaller rank. Thus, strongly minimal linearizations are optimal in this sense.
\end{remark}

By Remark \ref{rem:minimal-glr}, we have that strongly minimal linearizations of a {\em polynomial matrix} $P(\la)$ are always GLR-linearizations of $P(\la)$. However, the following example shows that they are not, in general, GLR-strong linearizations.

\begin{example} \label{ex.minimalvsGLR} \rm (Strongly minimal linearizations of polynomial matrices are not strong linearizations in the sense of Gohberg, Lancaster and Rodman) Consider the polynomial matrix
\[
P (\la) =
\la^2 \begin{bmatrix}
        0 & 0 \\
        0 & 1
      \end{bmatrix}
+ \la  \begin{bmatrix}
        1 & 0 \\
        0 & 1
      \end{bmatrix}
+ \begin{bmatrix}
        1 & 0 \\
        0 & 1
      \end{bmatrix}
\]
and the partitioned pencil
\[
L(\la) =
\left[ \begin{array}{c|cc}
         -1 & 0 & \la \\ \hline \phantom{\Big|}
         0 & \la + 1 & 0 \\
         \la & 0 & \la + 1
       \end{array}
\right].
\]
The transfer function matrix of $L(\la)$ is $P(\la)$ and $L(\la)$ is minimal and minimal at infinity. Therefore, $L(\la)$ is a strongly minimal linearization of $P(\la)$ and also a GLR-linearization of $P(\la)$. However, $\mathrm{rev}_1 L(\la)$ is {\em not} unimodularly equivalent to $\mathrm{diag} (\mathrm{rev}_2 P(\la), 1)$ and, thus, $L(\la)$ is not a GLR-strong linearization of $P(\la)$. In order to see this, observe that
\[
\mathrm{rev}_2 P (\la) =
\la^2 \begin{bmatrix}
        1 & 0 \\
        0 & 1
      \end{bmatrix}
+ \la  \begin{bmatrix}
        1 & 0 \\
        0 & 1
      \end{bmatrix}
+ \begin{bmatrix}
        0 & 0 \\
        0 & 1
      \end{bmatrix} \;\; \mbox{and} \;\;
\mathrm{rev}_1 L (\la) =
\left[ \begin{array}{c|cc}
         -\la & 0 & 1 \\ \hline \phantom{\Big|}
         0 & \la + 1 & 0 \\
         1 & 0 & \la + 1
       \end{array}
\right],
\]
which makes it transparent that $\mathrm{rev}_1 L(\la)$ does not have eigenvalues (or zeros) at zero, while $\mathrm{rev}_2 P(\la)$ does. In general, it is possible to prove by using Theorem \ref{theo:recover_infinity} that strongly minimal linearizations of polynomial matrices of degree larger than $1$ with eigenvalues at infinity are not GLR-strong linearizations.
\end{example}

Despite of the fact of not being GLR-strong linearizations, strongly minimal linearizations of a polynomial matrix $P(\la)$ allow us to recover always the complete list of structural data of $P(\la)$, including its minimal indices. Moreover, we will prove in this paper that they allow us to preserve structures of polynomial matrices that cannot always be preserved by GLR-strong linearizations.

Finally, note that according to the definitions in \cite{DMQVD}, we can also say that a strongly minimal linearization $L(\la)$ of a rational matrix $R(\la)$ is a linearization of its transfer function matrix $R(\la)$ at all finite points and also at infinity. However, strongly minimal linearizations are not always strong linearizations in the sense of \cite[Definition 3.4]{ADMZ} since the first degree coefficients of their $(1,1)$-blocks are not necessarily invertible.


\section{Constructing strongly minimal linearizations of polynomial matrices}\label{sec:linearization_polynomial}

In this section we focus on constructing explicitly a strongly minimal linearization for any given polynomial matrix $P(\la)\in \BC[\la]^{m\times n}$ of degree $d>1$~:
\begin{equation}\label{poly}
P(\la):= P_0 + P_1\la + \cdots + P_d\la^d.
 \end{equation}
Such a strongly minimal linearization is constructed in Theorem \ref{deflate} and we will prove in Section \ref{sec:self-cong-poly} that it inherits the structure of $P(\la)$, when $P(\la)$ possesses any of the self-conjugate structures considered in this work. The construction uses three pencils associated with $P(\la)$ that have appeared before in the literature. They are described in the following paragraphs.

The pencil
\begin{equation}\label{companion}
L_r(\la):=\left[\begin{array}{c|c} A_r(\la) & -B_r(\la) \\ \hline \phantom{\Big|} C_r(\la) & D_r(\la) \end{array}\right] :=
 \left[\begin{array}{cccc|c} -I_n & \la I_n & & &  0 \\  & \ddots & \ddots & & \vdots \\ & & -I_n  & \la I_n & 0 \\
 & & & -I_n & \la I_n \\ \hline \phantom{\Big|}   \la P_d & \ldots & \ldots & \la P_2 & \la P_1+P_0
  \end{array}\right]
\end{equation}
was used in the classical reference \cite{VanD83}.
It is easy to see that $L_r(\la)$ is a linear polynomial system matrix of $P(\la)$, since $P(\la) = D_r(\la) + C_r (\la) A_r (\la)^{-1} B_r (\la)$, and that it
is minimal for all finite $\la$. For the point at  $\infty$, E-controllability is clearly satisfied but E-observability is only satisfied if the matrix $P_d$ has full column rank $n$. Thus, $L_r (\la)$ is {\em not} a strongly minimal linearization of $P(\la)$ when $P_d$ does not have full column rank. However, note that $L_r (\la)$ is always a GLR-strong linearization of $P(\la)$. This can be seen, for instance, by noting that if the two block rows in \eqref{companion} are interchanged, we obtain one of the block Kronecker linearizations (with only one block column) associated to $P(\la)$ defined in \cite[Section 4]{DLPV18}. The pencil \eqref{companion} has a structure similar to that of the classical first or row Frobenius companion form.

The pencil
\begin{equation}\label{dual}
L_c(\la):=\left[\begin{array}{c|c} A_c(\la) & -B_c(\la) \\ \hline \phantom{\Big|} C_c(\la) & D_c(\la) \end{array}\right] :=
 \left[\begin{array}{cccc|c} -I_m & & & &  \la P_d \\  \la I_m & \ddots & & & \vdots \\ &  \ddots & -I_m  & & \vdots \\
 & & \la I_m  & -I_m & \la P_2 \\ \hline \phantom{\Big|} 0  & \ldots & 0 & \la I_m & \la P_1+P_0
  \end{array}\right]
\end{equation}
is in some sense ``dual'' to \eqref{companion}. It is also a linear polynomial system matrix of $P(\la)$, since $P(\la) = D_c(\la) + C_c (\la) A_c (\la)^{-1} B_c (\la)$. Moreover, $L_c (\la)$ is strongly E-observable, but not necessarily  strongly E-controllable, unless $P_d$ has full row rank. As a consequence, $L_c (\la)$ is a strongly minimal linearization of $P(\la)$ if and only if $P_d$ has full row rank. However, $L_c (\la)$ is always a GLR-strong linearization of $P(\la)$. The pencil \eqref{dual} has a structure similar to that of the classical second or column Frobenius companion form.

The pencil {\small
\begin{equation}\label{lancaster}
L_s(\la):=\left[\begin{array}{c|c} A_s(\la) & -B_s(\la) \\ \hline \phantom{\Big|} C_s(\la) & D_s(\la) \end{array}\right] :=
 \left[\begin{array}{cccc|c} & & & -P_d &  \la P_d  \\  & & \adots &  \la P_d -P_{d-1} & \vdots \\  & -P_d & \adots & \vdots  & \vdots\\ [+2mm]
  -P_d &  \la P_d -P_{d-1} & \ldots & \la P_3 -P_2 & \la P_2 \\ \hline \phantom{\Big|}   \la P_d & \ldots & \ldots & \la P_2 & \la P_1+P_0
  \end{array}\right]
\end{equation}}
was originally proposed by Lancaster in \cite[pp. 58-59]{Lancaster66} for regular polynomial matrices with $P_d$ invertible. In this paper, we use it for arbitrary polynomial matrices, including rectangular ones. $L_s (\la)$ has the advantage to preserve the Hermitian or skew-Hermitian nature of the coefficients of the linearization, if $P(\la)$ happens to have coefficients with such properties. The pencil $L_s (\la)$ has been also studied more recently in \cite{hmmt2006,mmmm2006}, where it is seen as one of the pencils of the standard basis of the linear space $\mathbb{DL} (P)$ of pencils related to $P(\la)$. It is well known that $L_s (\la)$ is a GLR-strong linearization of $P(\la)$ if and only if $P_d$ is invertible \cite{DeTDM2009,mmmm2006}. In fact, in this case, $L_s (\la)$ is also a strongly minimal linearization of $P(\la)$ since it is strongly minimal and $P(\la) = D_s(\la) + C_s (\la) A_s (\la)^{-1} B_s (\la)$. However, if $P_d$ is not invertible, $L_s (\la)$ is not a linearization of $P(\la)$ in any of the senses considered in the literature and, even more, it is not a Rosenbrock polynomial system matrix of $P(\la)$ since $A_s (\la)$ is not regular. Despite of this fact, $L_s (\la)$ is our starting point for constructing the strongly minimal linearization of $P(\la)$ of interest in this work.

The constant block Hankel matrix $T$ defined in the next equation
\begin{equation} \label{toeplitz}
T:= \left[\begin{array}{cccc} & & & P_d \\  & & \adots &  P_{d-1} \\  & P_d & \adots & \vdots \\
P_d &  P_{d-1} & \ldots & P_2
\end{array}\right]
\end{equation}
plays a key role in the rest of the paper. To begin with, it allows us
to obtain the following relations
\begin{equation} \label{leftright}
\left[\begin{array}{cc} A_s(\la) & -B_s(\la)  \end{array}\right] =T \, \left[\begin{array}{cc} A_r(\la) & -B_r(\la) \end{array}\right],
\quad
\left[\begin{array}{cc} A_s(\la) \\ C_s(\la)  \end{array}\right] =\left[\begin{array}{cc} A_c(\la) \\ C_c(\la) \end{array}\right]\, T,
\end{equation}
between submatrices of the pencils $L_s (\la)$, $L_r (\la)$ and $L_c (\la)$. The matrix $T$ is invertible if and only if $P_d$ is square and invertible. Otherwise, $T$ is singular and this is the case that requires a careful analysis.

In \cite{VanD83}, it was shown how to derive from the linear polynomial system matrix $L_r(\la)$ of $P(\la)$, a smaller linear polynomial system matrix $\widehat L_r(\la)$ that is both strongly E-controllable and E-observable, and hence strongly minimal, by using only multiplications by constant unitary matrices. This was obtained by deflating the unobservable infinite eigenvalues from the pencil $L_r(\la)$. Moreover, the obtained pencil $\widehat L_r(\la)$ allows us to recover the complete list of structural data of $P(\la)$. The reduction procedure in \cite{VanD83} has been recently extended to arbitrary linear polynomial system matrices of arbitrary rational matrices $R(\la)$ in \cite{DQV}, where it is proved that the obtained strongly minimal linear polynomial system matrix has as transfer function matrix $Q_1 R(\la) Q_2$, where $Q_1$ and $Q_2$ are constant invertible matrices. We emphasize that the procedures in \cite{VanD83,DQV} lead to stable and efficient numerical algorithms since both are based on unitary transformations.

We show in Theorem \ref{deflate} that a procedure similar to that in \cite{VanD83} can be applied to $L_s(\la)$ in order to derive a strongly minimal linear polynomial system matrix $\widehat L_s(\la)$ of $P(\la)$, despite of the fact that if $P_d$ is not square or invertible, then {\em $L_s(\la)$ is not a Rosenbrock polynomial system matrix} since $A_s(\la)$ is then not regular. Moreover, we remark that the procedure in Theorem \ref{deflate} is much simpler than those in \cite{VanD83,DQV} and that, as said before, it yields a polynomial system matrix whose transfer function matrix is precisely $P(\la)$. Before stating and proving Theorem \ref{deflate}, we prove the simple auxiliary Lemma \ref{transfer} and introduce some other auxiliary concepts.

A rational matrix $G(\la)\in \mathbb{C} (\la)^{p\times n}$ (with $p<n$) is said to be a rational basis if its rows form a basis of the rational subspace they span, i.e., if it has full row normal rank. Two rational bases $G(\la) \in \mathbb{C}(\la)^{p\times n}$ and $H(\la) \in \mathbb{C} (\la)^{q\times n}$ are said to be dual if $p+q = n$, and $G(\la) \, H(\la)^T =0$.
\begin{lemma}\label{transfer}
	Let $$S(\lambda):=\left[\begin{array}{ccc} A(\la) & -B(\la) \\ C(\la) & D(\la) \end{array}\right] \in \mathbb{C}[\la]^{(p+m) \times (p+n)}$$ be a polynomial system matrix, where $A(\la)$ is assumed to be regular.  Let $H(\la)$ be a rational basis of the form  $H(\la):=\left[\begin{array}{cc}M(\la) & I_n\end{array}\right]$ dual to $\left[\begin{array}{cc}A(\la) & -B(\la)\end{array}\right],$ i.e., such that $\left[\begin{array}{cc}A(\la) & -B(\la)\end{array}\right]H(\la)^T=0,$ then $\left[\begin{array}{cc}C(\la) & D(\la)\end{array}\right]H(\la)^{T}$ is the transfer function of $S(\la)$.
\end{lemma}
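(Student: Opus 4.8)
The claim is that $\begin{bmatrix} C(\la) & D(\la)\end{bmatrix} H(\la)^T$ equals the transfer function $R(\la):=D(\la)+C(\la)A(\la)^{-1}B(\la)$ of $S(\la)$, and the plan is to verify this by a one-line substitution after making the duality relation explicit. First I would write $H(\la)=\begin{bmatrix} M(\la) & I_n\end{bmatrix}$ with $M(\la)\in\BC(\la)^{n\times p}$, so that $H(\la)^T=\begin{bmatrix} M(\la)^T \\ I_n\end{bmatrix}$, and expand the hypothesis $\begin{bmatrix} A(\la) & -B(\la)\end{bmatrix}H(\la)^T=0$ blockwise. This reads $A(\la)M(\la)^T-B(\la)=0$, and since $A(\la)$ is assumed regular it may be inverted, giving $M(\la)^T=A(\la)^{-1}B(\la)$; in particular $M(\la)$ is uniquely determined by the duality condition.

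Then I would substitute this expression into $\begin{bmatrix} C(\la) & D(\la)\end{bmatrix}H(\la)^T=C(\la)M(\la)^T+D(\la)=C(\la)A(\la)^{-1}B(\la)+D(\la)$, which is precisely the Schur complement of $A(\la)$ in $S(\la)$, i.e. the transfer function of $S(\la)$ by definition. This completes the argument.

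There is essentially no obstacle here: the only points to check are that the block dimensions are compatible — $M(\la)$ being $n\times p$ makes $A(\la)M(\la)^T$ a $p\times n$ matrix matching $B(\la)$, and $C(\la)M(\la)^T$ an $m\times n$ matrix matching $D(\la)$ — and that one is entitled to invert $A(\la)$, which is guaranteed by the regularity assumption. If one wishes to be complete, one may additionally observe that a dual basis $H(\la)$ of the stated normalized form always exists: $\begin{bmatrix} A(\la) & -B(\la)\end{bmatrix}$ has full row rank $p$ because $A(\la)$ is regular, hence its right rational null space has dimension $n$ and is spanned by the rows of $\begin{bmatrix} (A(\la)^{-1}B(\la))^T & I_n\end{bmatrix}$, but this existence remark is not needed for the lemma as stated.
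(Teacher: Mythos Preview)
Your proof is correct and follows essentially the same approach as the paper: expand the duality relation to get $A(\la)M(\la)^T=B(\la)$, invert $A(\la)$ using regularity, and substitute into $C(\la)M(\la)^T+D(\la)$. The extra remarks you include on block dimensions and on existence of a normalized dual basis are sound but, as you note, not needed for the lemma as stated.
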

\begin{proof}
	The equation
	$$\left[\begin{array}{cc}  A(\la) & - B(\la) \end{array}\right]
	\left[\begin{array}{cc}  M(\la)^{T} \\ I_n \end{array}\right]=  0
	$$
	implies $A(\la) M(\la)^{T}= B(\la) $ and, since $A(\la)$ is regular, $M(\la)^{T}=A(\la)^{-1}B(\la) .$ Thus $\left[\begin{array}{cc}C(\la) & D(\la)\end{array}\right]H(\la)^{T}=C(\la)A(\la)^{-1}B(\la)+D(\la)$.
\end{proof}

\begin{theorem} \label{deflate}
Let $P(\la)\in \BC[\la]^{m\times n}$ be a polynomial matrix as in \eqref{poly}. Let $T$ be the block Hankel matrix in \eqref{toeplitz} and $r:=\rank T$. Let $U=\left[\begin{array}{cc} U_1 & U_2  \end{array}\right]$ and $V=\left[\begin{array}{cc} V_1 & V_2  \end{array}\right]$ be unitary matrices that ``compress'' the matrix $T$ as follows~:
\begin{equation} \label{eq.compressT} U^*TV =\left[\begin{array}{cc} 0 &  0 \\ 0 & U_2^*TV_2 \end{array}\right]=: \left[\begin{array}{cc} 0 &  0 \\ 0 & \widehat T \end{array}\right],
\end{equation}
where $\widehat T$ is of dimension $r\times r$ and invertible. Then, if $L_s(\la)$ is the matrix pencil in \eqref{lancaster}, the pencil $\mathrm{diag} (U^*, I_m)  \,  L_s(\la) \, \mathrm{diag} (V, I_n)$ is equal to the ``compressed'' pencil
\begin{equation} \label{compressed}
\left[\begin{array}{cc|c} 0 & 0 & 0 \\ 0 & \widehat A_s(\la) & -\widehat B_s(\la) \\ \hline \phantom{\Big|} 0 & \widehat C_s(\la) & \widehat D_s(\la) \end{array}\right]:=\left[\begin{array}{c|c} U^*A_s(\la)V & -U^*B_s(\la) \\ \hline \phantom{\Big|} C_s(\la)V & D_s(\la) \end{array}\right] ,
\end{equation}
 and
	\begin{equation} \label{eq.widehatL_s1} \widehat L_s(\la):=\left[\begin{array}{c|c} \widehat A_s(\la) & -\widehat B_s(\la) \\  \hline \phantom{\Big|} \widehat C_s(\la) & \widehat D_s(\la) \end{array}\right]
	\end{equation}
 is a strongly minimal linearization of $P(\la)$, where $\widehat A_s(\la) \in \mathbb{C}[\la]^{r \times r}$ is regular. In particular, $P(\la)=\widehat D_s(\la)+\widehat C_s(\la)\widehat A_s(\la)^{-1}\widehat B_s(\la)$.
\end{theorem}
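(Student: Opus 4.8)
The plan is to exploit the factorization identities \eqref{leftright} together with the already-established facts that $L_r(\la)$ is strongly E-controllable (and minimal at finite points) and that $L_c(\la)$ is strongly E-observable. First I would verify that the left-hand side of \eqref{compressed} is actually of the block-triangular form displayed there, i.e.\ that the first $d-r$ columns of $U^*A_s(\la)V$ and the first $d-r$ rows of $C_s(\la)V$ (appropriately interpreted after the column compression) vanish. This follows directly from \eqref{leftright}: since $\bigl[\,A_s(\la)\ \,{-}B_s(\la)\,\bigr] = T\bigl[\,A_r(\la)\ \,{-}B_r(\la)\,\bigr]$, multiplying on the left by $U^*$ gives $U^*\bigl[\,A_s\ {-}B_s\,\bigr] = (U^*TV)(V^*\bigl[\,A_r\ {-}B_r\,\bigr])$ (I would insert $VV^*=I$ only on the $A_s$ block, not on $B_s$), and the top $d-r$ block rows of $U^*TV$ are zero by \eqref{eq.compressT}; likewise the column compression of $A_s(\la)$ by $V$ together with $\bigl[\,A_s;\ C_s\,\bigr] = \bigl[\,A_c;\ C_c\,\bigr]T$ kills the first $d-r$ columns. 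Hence the compressed pencil has the claimed shape and $\widehat L_s(\la)$ in \eqref{eq.widehatL_s1} is a genuine pencil of size $(r+m)\times(r+n)$.

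Next I would show that $\widehat L_s(\la)$ is a (Rosenbrock) polynomial system matrix of $P(\la)$, i.e.\ that $\widehat A_s(\la)$ is regular and that the Schur complement is $P(\la)$. Regularity of $\widehat A_s(\la)$: from $\bigl[\,A_s\ {-}B_s\,\bigr]=T\bigl[\,A_r\ {-}B_r\,\bigr]$ the block row $\bigl[\,A_s\ {-}B_s\,\bigr]$ has full row normal rank equal to $\rank T = r$ (because $\bigl[\,A_r\ {-}B_r\,\bigr]$ has full row rank for all $\la$), and after compression this forces the $r\times r$ block $\widehat A_s(\la)$ to be nonsingular for generic $\la$; I would make this precise by noting $U_2^*\bigl[\,A_s\ {-}B_s\,\bigr] = \widehat T\,V^*\bigl[\,A_r\ {-}B_r\,\bigr]$ with $\widehat T$ invertible, so $\widehat A_s(\la)=\widehat T (V^*A_r(\la)) $ restricted appropriately, whose determinant is $\pm\det\widehat T\cdot(\pm\la^{\text{something}})\neq 0$. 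For the transfer function I would apply Lemma~\ref{transfer}: a dual rational basis to $\bigl[\,A_r(\la)\ {-}B_r(\la)\,\bigr]$ of the form $\bigl[\,M_r(\la)\ \ I_n\,\bigr]$ satisfies $A_r(\la)M_r(\la)^T=B_r(\la)$; since $\bigl[\,A_s\ {-}B_s\,\bigr]=T\bigl[\,A_r\ {-}B_r\,\bigr]$, the \emph{same} $\bigl[\,M_r(\la)\ \ I_n\,\bigr]$ is dual to $\bigl[\,A_s(\la)\ {-}B_s(\la)\,\bigr]$, and applying $U^*$ does not change this (the null space of $U^*\bigl[\,A_s\ {-}B_s\,\bigr]\operatorname{diag}(V,I_n)$ corresponds to the same vector after the cosmetic change of variables). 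Because $C_s(\la)$ and $D_s(\la)$ are the bottom block row shared by $L_s(\la)$ and $L_r(\la)$, Lemma~\ref{transfer} then yields $\widehat D_s(\la)+\widehat C_s(\la)\widehat A_s(\la)^{-1}\widehat B_s(\la) = D_s(\la)+C_s(\la)A_r(\la)^{-1}B_r(\la)$, and the right-hand side is $P(\la)$ because $L_r(\la)$ is a polynomial system matrix of $P(\la)$ (here one uses that the compression only removed a zero diagonal block, so the Schur complement is unchanged).

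Finally I would establish strong minimality of $\widehat L_s(\la)$, i.e.\ strong E-controllability and strong E-observability. For E-controllability: $\bigl[\,\widehat A_s(\la)\ {-}\widehat B_s(\la)\,\bigr] = U_2^*\bigl[\,A_s(\la)\ {-}B_s(\la)\,\bigr]\operatorname{diag}(V,I_n) = \widehat T\, V^*\bigl[\,A_r(\la)\ {-}B_r(\la)\,\bigr]\operatorname{diag}(V,I_n)$; since $\widehat T$ is a constant invertible matrix and $V$ is unitary, this pencil has full row rank (finite and at infinity) exactly when $\bigl[\,A_r(\la)\ {-}B_r(\la)\,\bigr]$ does — which is exactly the strong E-controllability of $L_r(\la)$, already noted in the text right after \eqref{dual}. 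Symmetrically, $\bigl[\,\widehat A_s(\la);\ \widehat C_s(\la)\,\bigr] = \operatorname{diag}(U_2^*,I_m)\bigl[\,A_s(\la);\ C_s(\la)\,\bigr]V_2 = \operatorname{diag}(U_2^*,I_m)\bigl[\,A_c(\la);\ C_c(\la)\,\bigr]T V_2$, and $T V_2 = U\operatorname{diag}(0,\widehat T)$ has column space equal to that of $U_2\widehat T$, so this block column inherits full column rank (finite and at infinity) from the strong E-observability of $L_c(\la)$. Thus both conditions of Definition~\ref{min} hold, $\widehat A_s(\la)\in\mathbb{C}[\la]^{r\times r}$ is regular, and $\widehat L_s(\la)$ is a strongly minimal linearization of $P(\la)$.

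I expect the main obstacle to be the bookkeeping in the second paragraph: carefully inserting $VV^*$ or $UU^*$ on the correct blocks so that the identities \eqref{leftright} and \eqref{eq.compressT} combine to show simultaneously (i) the block-triangular structure, (ii) regularity of $\widehat A_s(\la)$, and (iii) the invariance of the Schur complement under the compression — all while keeping track of the fact that the rectangular compression matrices $U_1,V_1$ annihilate $T$ but not the pencils $L_r,L_c$ themselves, only the combination through $T$.
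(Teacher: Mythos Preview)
Your overall strategy coincides with the paper's: use the factorizations \eqref{leftright} together with the strong E-controllability of $L_r(\la)$ and the strong E-observability of $L_c(\la)$, and then invoke Lemma~\ref{transfer}. However, two of your intermediate arguments do not close as written.

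First, the regularity of $\widehat A_s(\la)$. You write that $U_2^*\bigl[A_s\ {-}B_s\bigr]=\widehat T\,V^*\bigl[A_r\ {-}B_r\bigr]$ (the sizes do not match here; it should be $\widehat T\,V_2^*$, since $U_2^*T=\widehat T V_2^*$) and then assert that $\det\widehat A_s(\la)=\pm\det\widehat T\cdot(\pm\la^{\text{something}})$. This is incorrect: one obtains $\widehat A_s(\la)=\widehat T\,V_2^*A_r(\la)V_2$, and $\det\bigl(V_2^*A_r(\la)V_2\bigr)$ is in general a polynomial in $\la$ with no reason a priori to be a monomial, or even nonzero. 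The paper's fix is to evaluate at $\la=0$: since $A_r(0)=-I$, one gets $\widehat A_s(0)=-\widehat T$, which is invertible. Second, in the E-observability step you write $\bigl[\widehat A_s;\widehat C_s\bigr]=\operatorname{diag}(U_2^*,I_m)\bigl[A_c;C_c\bigr]\,U_2\widehat T$ and conclude full column rank ``is inherited'' from $\bigl[A_c;C_c\bigr]$. But $\operatorname{diag}(U_2^*,I_m)$ is a wide matrix and does not preserve column rank under left multiplication; the paper instead uses the full unitary $\operatorname{diag}(U^*,I_m)$, obtaining $\bigl[0;\widehat A_s;\widehat C_s\bigr]=\operatorname{diag}(U^*,I_m)\bigl[A_c;C_c\bigr]\,U_2\widehat T$, from which full column rank follows because $\operatorname{diag}(U^*,I_m)$ is invertible. (Equivalently, you would need to observe first that $U_1^*A_sV_2=0$, so passing from $U^*$ to $U_2^*$ discards only zero rows.) Finally, in the transfer-function step be careful that $[M_r(\la)\ I_n]$ is \emph{not} a dual basis of $[A_s(\la)\ {-}B_s(\la)]$ when $r<n(d-1)$ (the null space is larger); the paper explicitly transports the dual basis through $\operatorname{diag}([0\ I_r]V^*,I_n)$ to obtain one of the correct size for $[\widehat A_s\ {-}\widehat B_s]$.
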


\begin{proof}
It follows from \eqref{leftright} and the strong E-controllability of $\left[\begin{smallmatrix} A_r(\la) & -B_r(\la)  \end{smallmatrix}\right]$
that  $\left[\begin{smallmatrix} A_s(\la) & -B_s(\la)  \end{smallmatrix}\right]$ has rank $r$ for all $\la$, infinity included, and that its
left null space is spanned by the rows of $U_1^*$. Likewise, it follows from \eqref{leftright} and the strong E-observability of $\left[\begin{smallmatrix} A_c(\la) \\ C_c(\la)  \end{smallmatrix}\right]$  that  $\left[\begin{smallmatrix} A_s(\la) \\ C_s(\la)  \end{smallmatrix}\right]$ has rank $r$ for all $\la$, infinity included and that its right null space is spanned by the columns of $V_1$.
This proves the compressed form \eqref{compressed}.

We then prove that the $r\times r$ matrix pencil $\widehat A_s(\la)$ is regular. This follows from the identity
\begin{equation} \label{eq.widehatAs} \widehat A_s(\la) = U_2^*T A_r(\la) V_2, \quad \mathrm{where} \quad  A_r(\la) = \left[\begin{array}{cccc} -I_n & \la I_n & &  \\  & \ddots & \ddots &  \\ & & -I_n  & \la I_n  \\
 & & & -I_n    \end{array}\right]
\end{equation}
which, for $\la=0$ becomes $ \widehat A_s(0) = -U_2^*TV_2=-\widehat T$.

The fact that $\left[\begin{smallmatrix} \widehat A_s(\la) & - \widehat B_s(\la)  \end{smallmatrix}\right]$ has full row rank $r$ for all $\la$, $\infty$ included, follows from the identity
$$ \left[\begin{array}{ccc} 0 & \widehat A_s(\la) & - \widehat B_s(\la)  \end{array}\right] = \widehat T V^*_2 \left[\begin{array}{cc} A_r(\la) & -B_r(\la)  \end{array}\right] \diag{V,I_n}.
$$
The fact that $\left[\begin{smallmatrix} \widehat A_s(\la) \\ \widehat C_s(\la)  \end{smallmatrix}\right]$ has full column rank $r$ for all $\la$, $\infty$ included,
follows from the dual identity
$$ \left[\begin{array}{ccc} 0 \\ \widehat A_s(\la) \\  \widehat C_s(\la)  \end{array}\right] = \diag{U^*, I_m} \left[\begin{array}{cc} A_c(\la) \\ C_c(\la)  \end{array}\right] U_2 \widehat T.
$$

Together, these properties guarantee that  $ \widehat L_s(\la)$ is a strongly minimal linear polynomial system matrix. Its transfer function $\widehat C_s(\la)\widehat A_s(\la)^{-1}\widehat B_s(\la)+ \widehat D_s(\la) $ can then be obtained from a particular dual basis $N(\la)\in \BC(\la)^{ n\times (r+n) }$ of $\left[\begin{smallmatrix} \widehat A_s(\la) & - \widehat B_s(\la)  \end{smallmatrix}\right]$, by using Lemma \ref{transfer}. Since
\begin{align*}
\left[\begin{array}{cc|c} 0 & 0 & 0 \\ 0 & \widehat A_s(\la) & - \widehat B_s(\la)  \end{array}\right]\diag{V^*,I_n}
&=U^* \left[\begin{array}{cc} A_s(\la) & -B_s(\la)  \end{array}\right] \\ & = U^* T \left[\begin{array}{cc} A_r(\la) & -B_r(\la)  \end{array}\right],
\end{align*}
it follows that
$$ \left[\begin{array}{cc|c} 0 & 0 & 0 \\ 0 & \widehat A_s(\la) & - \widehat B_s(\la)  \end{array}\right]\diag{V^*,I_n}   \left[\begin{array}{c} \la^{d-1} I_n \\ \vdots \\ \la I_n \\ I_n   \end{array}\right]=0
$$
and hence that
$$  \left[\begin{array}{c|c} \widehat A_s(\la) & - \widehat B_s(\la)  \end{array}\right]\diag{\left[\begin{array}{cc} 0 & I_r \end{array}\right]V^*,I_n}   \left[\begin{array}{c} \la^{d-1} I_n \\ \vdots \\ \la I_n \\ I_n   \end{array}\right]=0.
$$
Therefore, by setting
$$  N(\la)^{T}:=  \diag{\left[\begin{array}{cc} 0 & I_r \end{array}\right]V^*,I_n} \left[\begin{array}{c} \la^{d-1} I_n \\ \vdots \\ \la I_n \\ I_n   \end{array}\right]  \in \mathbb{C}[\la]^{(r+n) \times n},
$$
we have that $N(\la)$ is a dual basis of $ \left[\begin{smallmatrix} \widehat A_s(\la) & - \widehat B_s(\la)  \end{smallmatrix}\right]$ with its rightmost block equal to $I_n$. By Lemma \ref{transfer}, and using the fact that $$  \widehat C_s(\la) \left[\begin{array}{cc} 0 & I_r \end{array}\right]V^*=\left[\begin{array}{cc} 0 & \widehat C_s(\la) \end{array}\right]V^*=C_s(\la),$$
we obtain that
$$  \left[\begin{array}{cc} \widehat C_s(\la) & \widehat D_s(\la) \end{array}\right] N(\la)^{T}= \left[\begin{array}{cc}  C_s(\la) &  D_s(\la) \end{array}\right]  \left[\begin{array}{c} \la^{d-1} I_n \\ \vdots \\ \la I_n \\ I_n   \end{array}\right]= P(\la)$$
is the transfer function of  $\widehat L_s(\la)$.
\end{proof}

\begin{remark} \label{rem.computmaintheorem} Once the unitary matrices $U$ and $V$ (or just their last $r$ columns, $U_2$ and $V_2$) and the matrix $\widehat{T}$ in \eqref{eq.compressT} are computed, Theorem \ref{deflate} yields an efficient and stable algorithm for computing the strongly minimal linear realization $\{\widehat{A}_s (\la), \widehat{B}_s (\la), \widehat{C}_s (\la) , \widehat{D}_s (\la) \}$ of $P(\la)$. An expensive method for computing $U$, $V$ and $\widehat{T}$ is to compute the SVD of $T$, in economic version if just $U_2$ and $V_2$ are required. A cheaper method is to use the complete orthogonal decomposition in \cite[Sec. 5.4.2]{golub-van-loan-3}, which amounts to compute two QR factorizations. The block Hankel structure of $T$ (which by flipping the order of the block rows becomes block Toeplitz) allows us to use the very fast and stable method in \cite[Sec. IV]{vandooren-laurent-1979}, which makes all the computations on $m\times n$ submatrices. The method in \cite{vandooren-laurent-1979} has the additional advantage that if $r_d = \mathrm{rank} (P_d)$, then the rows of $U^* (1:(m-r_d) , 1:m)$ and the columns of $V(1:n, 1 :(n-r_d))$ (where we used MATLAB's notation) of the computed $U$ and $V$ are, respectively, unitary bases of the left and right nullspaces of $P_d$. Recall that these subspaces are precisely the left and right eigenspaces associated to the infinite eigenvalue of $P(\la)$ when $P(\la)$ is regular.
\end{remark}

\begin{remark} \label{rem.L_strans}  Even though the pencil $L_s(\la)$ in \eqref{lancaster} is not a Rosenbrock polynomial system matrix and neither is a GLR-linearization of $P(\la)$ if $P_d$ is rectangular or square and singular, it is easy to see, by using unimodular transformations that are well-known in the literature, that it has the same finite eigenvalues as $P(\la)$ with the same partial multiplicities. For this purpose, note that
$V(\la) L_s(\la) W(\la)= \diag{-T,P(\la)}$, where
$$
V(\la):=
\left[\begin{array}{ccccc}
1 & & & & \\
0 & 1 & & & \\
\vdots & \ddots & \ddots & & \\
0 & \cdots & 0 & 1 &  \\
\la^{d-1} & \cdots & \la^2 & \la & 1
\end{array}\right]\otimes I_m,
\;
W(\la):=
 \left[\begin{array}{ccccc} 1& \la  & \la^2 & \ldots & \la^{d-1}\\  & 1 &  \la & \ddots & \vdots \\ & & \ddots & \ddots &  \la ^2 \\ & & & 1  & \la \\
 & & & &  1
  \end{array}\right]\otimes I_n.
$$
Since the polynomial matrices $V(\la)$ and $W(\la)$ are unimodular, and $\diag{-T,P(\la)}$ is strictly equivalent to $\diag{0,I_r,P(\la)}$, this implies that $L_s(\la)$ is unimodularly equivalent to $\diag{0,I_r,P(\la)}$. Therefore, $L_s(\la)$ and $P(\la)$ have the same finite eigenvalues with the same partial multiplicities. Of course, this also follows from Theorem \ref{deflate} and the properties of strongly minimal linearizations studied in Section \ref{subsec.stronglyminlinear}. However, note that $L_s(\la)$ is {\em not} a GLR-linearization of $P(\la)$ because $L_s(\la)$ is {\em not} unimodularly equivalent to $\diag{I,P(\la)}$.

On the other hand, if we consider the pencil $\widehat L_s(\la)$ in \eqref{eq.widehatL_s1}, then Theorem \ref{deflate} proves that $L_s(\la)$ is strictly equivalent to $\diag{0,\widehat L_s(\la)}$. Combining the results above, we see that $\widehat L_s(\la)$ and $\diag{I_r,P(\la)}$ have the same normal rank and the same finite eigenvalues and partial multiplicities, which implies that $\widehat L_s(\la)$ and $\diag{I_r,P(\la)}$ are unimodularly equivalent \cite{Gan59} and, therefore, that $\widehat L_s(\la)$ is a GLR-linearization of $P(\la)$. This is a particular instance of the result mentioned in Section \ref{subsec.stronglyminlinear} that any minimal linear polynomial system matrix of $P(\la)$ is a GLR-linearization of $P(\la)$.
\end{remark}

\begin{remark} Observe that whenever $P_d$ is singular (which may happen even if $P(\la)$ is regular), the pencil $L_s (\la)$ is singular, since it is strictly equivalent to $\diag{0,\widehat L_s(\la)}$ by Theorem \ref{deflate}. Thus, $L_s(\la)$ has $m (d-1) -r$ left minimal indices equal to $0$ and $n (d-1) -r$ right minimal indices equal to $0$ in addition to the minimal indices of $\widehat{L}_s (\la)$ (which are precisely the minimal indices of $P(\la)$ due to Theorem \ref{prop:minimalindices}). Then, the process described in Theorem \ref{deflate} can be seen as a process that deflates from $L_s(\la)$ these additional minimal indices equal to zero in order to get a smaller pencil $\widehat L_s(\la)$ which is a strongly minimal linearization of $P(\la)$.
\end{remark}

\subsection{Quadratic polynomial matrices with low rank leading coefficient} \label{subsec.lowrank} In this subsection, we particularize the results in Theorem \ref{deflate} to quadratic polynomial matrices $$P(\la) = P_0 + \la P_1 + \la^2 P_2\in\mathbb{C}[\la]^{m\times n}$$ since they are particularly important in applications
\cite{tisseur-meerbergen}. Moreover, in some important applications, the leading coefficient $P_2$ of $P(\la)$ has low rank \cite{NLEVP}, a property related in the regular case to the presence of an infinite eigenvalue with high multiplicity. This presence is a challenge for the best available algorithms that compute all the eigenvalues of a regular dense quadratic polynomial matrix \cite{drmac,hammarling-munro-al}, since infinite eigenvalues have to be deflated. The algorithms in \cite{drmac,hammarling-munro-al} are based on the GLR-strong linearization known as the second Frobenius companion form,  but the strongly minimal linearization constructed in Theorem \ref{deflate} might be a competivite option since, as we show in this section, it is very simple and much smaller than the Frobenius companion form for quadratic polynomial matrices with $P_2$ having low rank.

Note that in the quadratic case
$$
L_s (\la) = \left[\begin{array}{c|c} -P_2 & \la P_2 \\  \hline \phantom{\Big|} \la P_2 & \la P_1 + P_0 \end{array}\right] \in \mathbb{C} [\la]^{2m \times 2n} \quad \mbox{and} \quad
T = P_2.
$$
In order to avoid confusions, in this section we denote the rank of the constant matrix $P_2$ by $r_2$ and the normal rank of $P(\la)$ by $r_P$, which in the regular case satisfies $r_P = m = n$. The rank compresion in \eqref{eq.compressT} reduces to obtaining a low rank representation of $P_2$, i.e., $P_2 = U_2 \widehat T V_2^*$, where $\widehat T \in \mathbb{C}^{r_2 \times r_2}$ is invertible and $U_2 \in \mathbb{C}^{m \times r_2}, V_2 \in \mathbb{C}^{n \times r_2}$ have orthornormal columns. Such representation can be computed via an economic SVD of $P_2$ or a complete orthogonal decomposition, as commented in Remark \ref{rem.computmaintheorem}. Then, we immediately obtain the following result either via a trivial computation or as a corollary of Theorem \ref{deflate}.

\begin{theorem} \label{thm.quadraticmain} Let $P(\la) = P_0 + P_1 \la + P_2 \la^2 \in \mathbb{C} [\la]^{m \times n}$ and $P_2 = U_2 \widehat T V_2^*$, where $\widehat T \in \mathbb{C}^{r_2 \times r_2}$ is invertible and $U_2 \in \mathbb{C}^{m \times r_2}, V_2 \in \mathbb{C}^{n \times r_2}$ have orthornormal columns. Then
$$
\widehat L_s(\la) = \left[\begin{array}{c|c} -\widehat T & \la \widehat T V_2^* \\  \hline \phantom{\Big|} \la U_2 \widehat T  & \la P_1 + P_0 \end{array}\right] \in \mathbb{C} [\la]^{(r_2 + m) \times (r_2 + n)}
$$
is a strongly minimal linearization of $P(\la)$.
\end{theorem}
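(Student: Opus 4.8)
The plan is to obtain Theorem~\ref{thm.quadraticmain} as the special case $d=2$ of Theorem~\ref{deflate}. The only thing that really needs checking is that the rank factorization $P_2=U_2\widehat T V_2^*$ in the hypothesis is exactly the data of a compression~\eqref{eq.compressT} of the block Hankel matrix $T$ attached to $P(\la)$; once this is observed, Theorem~\ref{deflate} delivers strong minimality for free, and the explicit form of $\widehat L_s(\la)$ follows by reading off the blocks of the compressed pencil~\eqref{compressed}. Specializing~\eqref{lancaster} and~\eqref{toeplitz} to $d=2$, one has
\[
L_s(\la)=\left[\begin{array}{c|c} -P_2 & \la P_2 \\ \hline \phantom{\Big|} \la P_2 & \la P_1+P_0\end{array}\right], \qquad T=P_2,
\]
so that $A_s(\la)=-P_2$, $B_s(\la)=-\la P_2$, $C_s(\la)=\la P_2$, $D_s(\la)=\la P_1+P_0$, and $r:=\rank T=\rank{P_2}=r_2$.

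Next I would complete the given matrices to unitary ones. Since $U_2\in\BC^{m\times r_2}$ and $V_2\in\BC^{n\times r_2}$ have orthonormal columns, pick $U_1\in\BC^{m\times(m-r_2)}$ and $V_1\in\BC^{n\times(n-r_2)}$ so that $U:=[\,U_1\ \ U_2\,]$ and $V:=[\,V_1\ \ V_2\,]$ are unitary. Then $U^*U_2=\left[\begin{smallmatrix}0\\ I_{r_2}\end{smallmatrix}\right]$ and $V_2^*V=\left[\begin{smallmatrix}0 & I_{r_2}\end{smallmatrix}\right]$, whence, using $P_2=U_2\widehat T V_2^*$,
\[
U^*TV=U^*U_2\,\widehat T\,V_2^*V=\left[\begin{array}{c}0\\ I_{r_2}\end{array}\right]\widehat T\left[\begin{array}{cc}0 & I_{r_2}\end{array}\right]=\left[\begin{array}{cc}0 & 0\\ 0 & \widehat T\end{array}\right],
\]
with leading zero block of size $(m-r_2)\times(n-r_2)$ and $\widehat T$ invertible of size $r_2\times r_2$. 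Thus $U$, $V$ and $\widehat T$ realize the compression~\eqref{eq.compressT} for $T=P_2$, so Theorem~\ref{deflate} applies verbatim.

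It then remains only to identify the blocks of the compressed pencil~\eqref{compressed}. From $U^*A_s(\la)V=-U^*TV$ we read off $\widehat A_s(\la)=-\widehat T$, which is regular since $\widehat T$ is invertible; from $-U^*B_s(\la)=\la\,U^*U_2\widehat T V_2^*$ and $C_s(\la)V=\la\,U_2\widehat T V_2^*V$ we read off $-\widehat B_s(\la)=\la\widehat T V_2^*$ and $\widehat C_s(\la)=\la U_2\widehat T$; and $\widehat D_s(\la)=D_s(\la)=\la P_1+P_0$. This is precisely the pencil displayed in the statement, and by Theorem~\ref{deflate} it is a strongly minimal linearization of $P(\la)$, with $P(\la)=\widehat D_s(\la)+\widehat C_s(\la)\widehat A_s(\la)^{-1}\widehat B_s(\la)$.

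There is essentially no hard step here; the only point needing care is the block bookkeeping when $m\ne n$ and $r_2<\min(m,n)$, namely matching the partition of $U$ and $V$ to the $(m-r_2)\times(n-r_2)$ leading zero block used in~\eqref{eq.compressT} and~\eqref{compressed}. As noted in the statement, one may alternatively bypass Theorem~\ref{deflate} and verify the defining properties of a strongly minimal linearization directly on $\widehat L_s(\la)$: the block $\widehat A_s(\la)=-\widehat T$ is trivially regular; its transfer function is $(\la P_1+P_0)+(\la U_2\widehat T)(-\widehat T)^{-1}(-\la\widehat T V_2^*)=\la P_1+P_0+\la^2 U_2\widehat T V_2^*=P(\la)$; and the factorizations $\left[\begin{smallmatrix}\widehat A_s(\la) & -\widehat B_s(\la)\end{smallmatrix}\right]=\widehat T\left[\begin{smallmatrix}-I_{r_2} & \la V_2^*\end{smallmatrix}\right]$ and $\left[\begin{smallmatrix}\widehat A_s(\la)\\ \widehat C_s(\la)\end{smallmatrix}\right]=\left[\begin{smallmatrix}-I_{r_2}\\ \la U_2\end{smallmatrix}\right]\widehat T$ show that these two pencils have, respectively, full row and full column rank $r_2$ at every finite $\la$ (because of the $\pm I_{r_2}$ blocks) and rank $r_2$ at infinity (because $\widehat T$ is invertible and $U_2$, $V_2$ have full column rank $r_2$), so $\widehat L_s(\la)$ is strongly E-controllable and strongly E-observable in the sense of Definition~\ref{min}. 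I would present the corollary route, remarking that the direct check is equally immediate.
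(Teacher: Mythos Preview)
Your proposal is correct and follows exactly the two approaches the paper itself suggests: the text introducing Theorem~\ref{thm.quadraticmain} says the result is obtained ``either via a trivial computation or as a corollary of Theorem~\ref{deflate}'', and you have carried out both explicitly and correctly. The block identifications and the direct verification of strong minimality are accurate; there is nothing to add.
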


Observe that if $P(\la)\in \mathbb{C} [\la]^{m \times m}$ is regular, then any GLR-strong linearization of $P(\la)$ has size $2m \times 2m$, while the size of $\widehat L_s(\la)$ is $(r_2 +m) \times (r_2 +m)$ and, so, much smaller if $r_2 \ll m$.

Another interesting feature of the quadratic polinomial case is that the recovery of the eigenvalue structure at infinity of $P(\la)$ from the one of $\widehat L_s(\la)$ is much simpler than in Theorem \ref{theo:recover_infinity}, as the next corollary shows.

\begin{corollary} \label{cor.quadraticinfty} With the same notation and hypotheses as in Theorem \ref{thm.quadraticmain}, let $r_P$ be the normal rank of $P(\la)$ and $0<\widetilde{e}_1\leq\cdots\leq\widetilde{e}_u$ be the partial multiplicities of $\mathrm{rev}_1 \widehat L_s(\la)$ at $0.$ Then, the partial multiplicities of
$\mathrm{rev}_2 P (\la)$ at $0$ are
\begin{equation*}
\underbrace{1=\cdots=1}_{r_P-r_2-u} \leq \widetilde{e}_{1} + 1 \leq \cdots \leq \widetilde{e}_{u} + 1.
\end{equation*}
\end{corollary}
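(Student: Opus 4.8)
The plan is to read off the structural indices at infinity of $P(\la)$, viewed as a rational matrix, from the strongly minimal linearization $\widehat L_s(\la)$ of Theorem~\ref{thm.quadraticmain} via Theorem~\ref{theo:recover_infinity}, and then translate those rational structural indices at infinity into the eigenvalue structure of $\mathrm{rev}_2 P(\la)$ at $0$ using the dictionary~\eqref{eq.infstruct22}.

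First I would compute the data that Theorem~\ref{theo:recover_infinity} needs for $\widehat L_s(\la)$. Its $(1,1)$-block is the \emph{constant} invertible matrix $\widehat A_s(\la)=-\widehat T\in\mathbb{C}^{r_2\times r_2}$, so, writing $\widehat A_s(\la)=\la A_1-A_0$ with $A_1=0$ and $A_0=\widehat T$, the convention $\mathrm{rev}_1\widehat A_s(\la)=A_1-\la A_0=-\la\widehat T$ applies. Since $\widehat T$ is constant and invertible, the Smith form of $-\la\widehat T$ at $0$ is $\la I_{r_2}$, so all of its partial multiplicities at $0$ equal $1$; in the notation of Theorem~\ref{theo:recover_infinity} this means $s=r_2$ and $e_1=\cdots=e_s=1$. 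Because $\widehat L_s(\la)$ is a strongly minimal linearization of $P(\la)$, it is in particular minimal at $\infty$, so Theorem~\ref{theo:recover_infinity} applies with $R(\la)=P(\la)$ and $r=r_P$. Subtracting $1$ from each entry of $(-e_s,\ldots,-e_1,0,\ldots,0,\widetilde e_1,\ldots,\widetilde e_u)$ turns the $r_2$ entries $-e_i=-1$ into $-2$, the $r_P-r_2-u$ middle zeros into $-1$, and each $\widetilde e_i$ into $\widetilde e_i-1\ge 0$. Hence the structural indices at infinity of $P(\la)$ as a rational matrix are
\[
(d_1,\ldots,d_{r_P})=\Bigl(\,\underbrace{-2,\ldots,-2}_{r_2},\ \underbrace{-1,\ldots,-1}_{r_P-r_2-u},\ \widetilde e_1-1,\ \ldots,\ \widetilde e_u-1\,\Bigr),
\]
which is already in nondecreasing order since $\widetilde e_i-1\ge 0$.

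Next I would compare this list with the one produced by~\eqref{eq.infstruct22} for $d=2$: if $0<t_p\le\cdots\le t_{r_P}$ are the (strictly positive) partial multiplicities of $\mathrm{rev}_2 P(\la)$ at $0$, then $(d_1,\ldots,d_{r_P})=(\underbrace{0,\ldots,0}_{p-1},t_p,\ldots,t_{r_P})-(2,\ldots,2)$. Matching the two nondecreasing lists value by value: the entry $-2$ occurs exactly $p-1$ times on the right-hand side (because $t_i\ge 1$ forces $t_i-2\ge -1$) and exactly $r_2$ times on the left, so $p-1=r_2$; removing those entries, the remaining equality forces exactly $r_P-r_2-u$ of the $t_i$ to equal $1$ and the remaining $u$ of them, in increasing order, to satisfy $t_i=\widetilde e_i+1$. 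Therefore the partial multiplicities of $\mathrm{rev}_2 P(\la)$ at $0$ are $\underbrace{1,\ldots,1}_{r_P-r_2-u},\ \widetilde e_1+1,\ \ldots,\ \widetilde e_u+1$, which is precisely the claimed list, sorted because $\widetilde e_i\ge 1$.

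The only delicate points are bookkeeping ones: treating $\widehat A_s(\la)$ as a degree-$0$ pencil (so that $\mathrm{rev}_1\widehat A_s(\la)=-\la\widehat T$, not $-\widehat T$), confirming that this gives $s=r_2$ with all $e_i=1$, and matching two sorted index lists term by term; none of these poses a genuine obstacle once Theorem~\ref{theo:recover_infinity} and~\eqref{eq.infstruct22} are in hand.
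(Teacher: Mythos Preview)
Your proof is correct and follows exactly the same route as the paper's own argument: observe that $\mathrm{rev}_1\widehat A_s(\la)=-\la\widehat T$ has $r_2$ partial multiplicities equal to $1$ at $0$, apply Theorem~\ref{theo:recover_infinity}, and then translate via~\eqref{eq.infstruct22} with $d=2$. The paper compresses all of this into two sentences, whereas you spell out the matching of the two index lists explicitly, but the approach is identical.
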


\begin{proof} Observe that the partial multiplicities of $\mathrm{rev}_1 (-\widehat{T}) = -\la \widehat{T}$ at $0$ are $\underbrace{1,1,\cdots ,1}_{r_2}$ because $\widehat{T}$ is invertible. Then, the result follows from combining Theorem \ref{theo:recover_infinity} with equation \eqref{eq.infstruct22} when $d=2$.
\end{proof}

Finally, we remark that if we are given a low rank representation $P_2 = L U^*$, where $L \in \mathbb{C}^{m \times r_2}, U \in \mathbb{C}^{n \times r_2}$ have both rank $r_2$ but their columns are not necessarily orthonormal, then a direct computation shows that
$$
\left[\begin{array}{c|c} -I_{r_2} & \la U^* \\  \hline \phantom{\Big|} \la L & \la P_1 + P_0 \end{array}\right] \in \mathbb{C} [\la]^{(r_2 + m) \times (r_2 + n)}
$$
is a strongly minimal linearization of $P(\la) = P_0 + \la P_1 + \la^2 P_2$ and that Corollary \ref{cor.quadraticinfty} also holds for this linearization.

\section{Constructing strongly minimal linearizations of self-conjugate polynomial matrices} \label{sec:self-cong-poly}

The main purpose of this section is to prove that the strongly minimal linearization \eqref{eq.widehatL_s1} of the matrix polynomial $P(\la)$ developed in Theorem \ref{deflate} inherits the structure of $P(\la)$ for any of the four structures considered in this work. In the last part of this section, we compare this result with those available in the literature for structure preserving GLR-strong linearizations of structured polynomial matrices.

 We start by reviewing the four structures of interest in this paper and their properties. In these definitions, note that if $P(\la)$ is the complex polynomial matrix in \eqref{poly}, then $$[P(\la)]^* := P_0^* + P_1^* \overline{\la} + \cdots + P_d^* \overline{\la}^d.$$ The considered self-conjugate structures are:
 \begin{enumerate}
 \item Hermitian polynomial matrices, which are defined as those satisfying $[P(\la)]^* = P(\overline \la)$. Equivalently, they are Hermitian for $\la\in \BR$ or have Hermitian coefficients $P_i^*=P_i$. They have a set of eigenvalues $\Lambda$ that is symmetric with respect to the real axis~: $\Lambda=\overline \Lambda$.
  \item Skew-Hermitian polynomial matrices, which are defined as those satisfying $[P(\la)]^* = -P(\overline \la)$. Equivalently, they are skew-Hermitian for $\la\in \BR$ or have skew-Hermitian coefficients $P_i^*=-P_i$. They have a set of eigenvalues $\Lambda$ that is symmetric with respect to the real axis~: $\Lambda= \overline \Lambda$.
  \item Para-Hermitian polynomial matrices, which are those satisfying  $[P(\la)]^* = P(-\overline \la)$. Equivalently, they are Hermitian for $\la\in \jmath\BR$, i.e., for $\la$ on the imaginary axis, or have scaled Hermitian coefficients $P_i^*=(-1)^iP_i$, i.e., $P_i$ is Hermitian for $i$ even and skew-Hermitian for $i$ odd. They have a set of eigenvalues $\Lambda$ that is symmetric with respect to the imaginary axis~: $\Lambda=-\overline \Lambda$.
  \item Para-skew-Hermitian polynomial matrices, which are defined as those satisfying $[P(\la)]^* = -P(-\overline \la)$. Equivalently, they are skew-Hermitian for $\la\in \jmath\BR$ or have skew-Hermitian scaled coefficients $P_i^*=(-1)^{(i+1)}P_i$, i.e., $P_i$ is Hermitian for $i$ odd and skew-Hermitian for $i$ even. They have a set of eigenvalues $\Lambda$ that is symmetric with respect to the imaginary axis~: $\Lambda=- \overline \Lambda$.
 \end{enumerate}

When the polynomial matrix $P(\la)$ has real coefficients $P_i$, these conditions become conditions on the transpose of each $P_i$, and the polynomial matrices are said to be symmetric, skew-symmetric, para-symmetric and para-skew-symmetric. We emphasize that the nomenclature above is the one commonly used for structured rational matrices in the linear system and control theory literature (see, for instance, \cite{GHNSVX02,ran2,ran1} and the references therein). However, in the literature focused on polynomial matrices, para-Hermitian and para-symmetric polynomial matrices are called $*$-even and T-even, respectively, while para-skew-Hermitian and para-skew-symmetric polynomial matrices are called $*$-odd and T-odd, respectively \cite{GoodVibrations}, and all of them are called generically alternating polynomial matrices \cite{MMMM-alternating}.

We first point out that the block Hankel matrix $T$ defined in \eqref{toeplitz} and its compression \eqref{eq.compressT}
inherit particular properties from the self-conjugate structures defined above.

\begin{lemma}  \label{lem:pol} Let $P(\la)\in\BC[\la]^{m\times m}$ be a polynomial matrix as in \eqref{poly}. Let us define the scaling matrix $S:=\diag{(-1)^{(d-1)}I_m, \ldots, (-1)^2I_m, -I_m}$. Then the block Hankel matrix $T$ in \eqref{toeplitz} satisfies the following equations
 \begin{enumerate}
 \item for Hermitian  $P(\la)$: $P_i^*=P_i$ and $T^*=T$,
 \item for skew-Hermitian $P(\la)$: $P_i^*=-P_i$ and $T^*=-T$,
 \item for para-Hermitian $P(\la)$: $P_i^*=(-1)^iP_i$ and $(ST)^*=ST$,
 \item for para-skew-Hermitian $P(\la)$: $P_i^*=(-1)^{(i+1)}P_i$ and $(ST)^*=-ST$.
 \end{enumerate}
The left and right transformations $U$ and $V$ of Theorem \ref{deflate} can then be chosen as $U=V$ in the Hermitian and skew-Hermitian cases and as $U=SV$ in the para-Hermitian and para-skew-Hermitian cases.
\end{lemma}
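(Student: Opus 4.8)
The plan is to verify the four stated identities for $T$ directly from the definition \eqref{toeplitz}, and then to observe that the compression step \eqref{eq.compressT} respects these symmetries so that $U$ and $V$ can be coupled as claimed. First I would record the anti-diagonal block structure of $T$: its $(i,j)$ block (numbering blocks so that the anti-diagonal is $i+j=d$) is $P_{d+1-i-j}$ when $i+j \le d$ and zero otherwise. Transposing–conjugating $T$ therefore reflects it across the anti-diagonal, sending the $(i,j)$ block to $P_{d+1-i-j}^*$ in the $(j,i)$ position; but since the $(j,i)$ position of $T$ already holds $P_{d+1-i-j}$, we get $T^* = T$ exactly when every coefficient satisfies $P_k^* = P_k$ (Hermitian case) and $T^* = -T$ exactly when $P_k^* = -P_k$ (skew-Hermitian case). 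This handles items (1) and (2).

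For the para-Hermitian and para-skew-Hermitian cases I would work with $ST$, where $S = \diag{(-1)^{d-1}I_m,\ldots,-I_m}$ multiplies the $i$-th block row by $(-1)^{d-i}$. The key bookkeeping point is that the $(i,j)$ block of $ST$ is $(-1)^{d-i}P_{d+1-i-j}$, and taking the conjugate transpose moves this to position $(j,i)$ as $(-1)^{d-i}P_{d+1-i-j}^*$; I would compare this with the $(j,i)$ block of $ST$, which is $(-1)^{d-j}P_{d+1-i-j}$. Writing $k=d+1-i-j$, the ratio of signs is $(-1)^{d-i}/(-1)^{d-j}=(-1)^{j-i}=(-1)^{(d+1-k)-2j}=(-1)^{d+1-k}$, so in the para-Hermitian case $P_k^*=(-1)^kP_k$ gives a net sign $(-1)^{d+1-k}(-1)^k=(-1)^{d+1}$, which is a constant independent of the block, and one checks with the chosen normalization of $S$ that this constant is $+1$, giving $(ST)^*=ST$; the para-skew-Hermitian case $P_k^*=(-1)^{k+1}P_k$ flips this to $(ST)^*=-ST$. (The one point to double-check is that the overall constant sign comes out right; if the normalization of $S$ as written produced $(-1)^{d+1}=-1$, one absorbs it into $S$, and indeed the definition of $S$ in the statement is chosen precisely so that the signs work, so this is purely a verification.)

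Finally, for the coupling of the transformations, I would argue as follows. In the Hermitian/skew-Hermitian case $T = \pm T^*$ is (skew-)Hermitian, so it admits a unitary diagonalization $T = W \Sigma W^*$; splitting $W = [W_1\ W_2]$ according to the kernel and the range, $U = V = W$ (after a harmless rescaling of the diagonal to make $\widehat T = U_2^*TV_2$ genuinely the nonsingular block) realizes \eqref{eq.compressT} with $U=V$, using that $\ker T = \ker T^*$. In the para-cases, $ST = \pm (ST)^*$ is (skew-)Hermitian, so we can compress $ST$ by a single unitary $W=[W_1\ W_2]$; setting $V := W$ and $U := SV$ (note $S$ is unitary since $S^2 = I$), we have $U^*TV = V^*S^*TV = V^*(S^{-1})(ST)V = V^*S(ST)V$, which by the block-diagonalization of $ST$ has precisely the compressed form \eqref{eq.compressT}, with $\widehat T = U_2^*TV_2$ nonsingular of size $r=\rank T = \rank (ST)$ (the latter equality holds because $S$ is invertible). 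The main obstacle, such as it is, is the sign bookkeeping in the para-cases — getting the exponents on $(-1)$ to collapse to a block-independent constant that matches the normalization of $S$ — everything else is routine linear algebra once the anti-diagonal block pattern of $T$ is written out explicitly.
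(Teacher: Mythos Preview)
Your approach matches the paper's, which simply notes that the coefficient symmetries ``trivially'' give the stated symmetries of $T$ (or $ST$), and then invokes normality of $T$ (resp.\ $ST$) to justify taking $U=V$ (resp.\ $U=SV$) in the compression \eqref{eq.compressT}.

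One concrete bookkeeping fix: your formula for the $(i,j)$ block of $T$ is off. With the standard indexing $1\le i,j\le d-1$, the anti-diagonal $i+j=d$ carries $P_d$ and the bottom-right corner carries $P_2$, so the nonzero blocks are $T_{ij}=P_{2d-i-j}$ for $i+j\ge d$ (not $P_{d+1-i-j}$ for $i+j\le d$). With $k=2d-i-j$ one has $j-i \equiv k \pmod 2$, so $(-1)^{j-i}=(-1)^{k}$ on the nose; hence in the para-Hermitian case $P_k^*=(-1)^kP_k$ yields $(ST)^*=ST$ with no residual factor $(-1)^{d+1}$, and the parity of $d$ plays no role. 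This resolves the ambiguity you flagged. Also, in your compression step for the para-cases, $U^*TV = V^*S^*TV = V^*(ST)V$ (only one $S$, since $S^*=S$), which is then block-diagonalized by your choice $V=W$.
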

\begin{proof}
The symmetries of the coefficient matrices $P_i$ trivially yield the four types of symmetries of $T$ mentioned in the Lemma.
For the compression \eqref{eq.compressT}, we can then choose $U=V$ in the Hermitian and skew-Hermitian cases because $T$ is normal, and we can choose $U=SV$ in the para-Hermitian and para-skew-Hermitian cases because $ST$ is then normal.
\end{proof}

Lemma \ref{lem:pol} implies that in the decomposition of Theorem \ref{deflate}, it suffices to construct a transformation $V$ that compresses the columns of $T$ in order to obtain a rank $r$ factorization
\begin{equation} \label{rankr}
U^*TV =\left[\begin{array}{cc} 0 &  0 \\ 0 & \widehat T \end{array}\right] =\left[\begin{array}{cc} 0 &  0 \\ 0 & U_2^*TV_2 \end{array}\right]
\end{equation}
where $\widehat T$ is $r\times r$ and is invertible.
This then leads to the following theorem.

\begin{theorem} \label{Pol} Let $P(\la)\in\BC[\la]^{m\times m}$ be a polynomial matrix as in \eqref{poly}, with a Hermitian, skew-Hermitian, para-Hermitian or para-skew-Hermitian structure. Let $U,V$ be the unitary matrices appearing in \eqref{eq.compressT}, where $U=V$ in the Hermitian and skew-Hermitian cases, and $U=SV$ in the para-Hermitian and para-skew-Hermitian cases with $S:=\diag{(-1)^{(d-1)}I_m, \ldots, (-1)^2I_m, -I_m}$. Then the linear polynomial system matrix
 $$ \widehat L_s(\la):=\left[\begin{array}{c|c} \widehat A_s(\la) & -\widehat B_s(\la) \\ \hline \phantom{\Big|} \widehat C_s(\la) & \widehat D_s(\la) \end{array}\right],$$
 defined in Theorem \ref{deflate},
is a strongly minimal linearization of $P(\la)$ with the same self-conjugate structure as $P(\la)$.
\end{theorem}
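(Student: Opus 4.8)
The plan is as follows. Theorem~\ref{deflate} already guarantees that $\widehat L_s(\la)$ is a strongly minimal linearization of $P(\la)$, so the only thing left to prove is that $\widehat L_s(\la)$ inherits the self-conjugate structure of $P(\la)$. I would do this by establishing the single global conjugation identity
$$ [\widehat L_s(\la)]^{*} = \epsilon\, \widehat L_s(\mu(\la)), $$
where $(\epsilon,\mu(\la))$ is $(1,\overline\la)$ in the Hermitian case, $(-1,\overline\la)$ in the skew-Hermitian case, $(1,-\overline\la)$ in the para-Hermitian case, and $(-1,-\overline\la)$ in the para-skew-Hermitian case. Read off in the block partition of $\widehat L_s(\la)$, this identity says precisely that $\widehat A_s(\la)$ and $\widehat D_s(\la)$ are pencils of the same type as $P(\la)$ and that $\widehat C_s(\la)^{*} = -\widehat B_s(\mu(\la))$; since the transfer function of $\widehat L_s(\la)$ is $P(\la)$ by Theorem~\ref{deflate}, this is exactly what structure preservation should mean here.

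The first and main step is to prove the analogous identity for the larger pencil $L_s(\la)$ of \eqref{lancaster}, namely
$$ \Sigma\, [L_s(\la)]^{*}\, \Sigma = \epsilon\, L_s(\mu(\la)), \qquad \Sigma := \diag{S,\, I_m}, $$
with $S=I$ in the Hermitian and skew-Hermitian cases and $S$ the scaling of the statement in the two para cases. This is a direct block-by-block check from the coefficient symmetries of Lemma~\ref{lem:pol} ($P_i^{*}=\pm P_i$ or $P_i^{*}=\pm(-1)^iP_i$) together with the block Hankel / anti-triangular layout of $A_s(\la)$, $B_s(\la)$, $C_s(\la)$ and the form $D_s(\la)=\la P_1+P_0$. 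I expect this verification to be the bulk of the work, although it is routine; the only genuinely delicate point is the sign bookkeeping in the $A_s(\la)$ block in the para cases, where the $(i,j)$ block depends only on $i+j$, so conjugation produces a factor $(-1)^{i+j}$, which is reproduced by conjugating with $\diag{-I_m,I_m,-I_m,\dots}$; comparing this with $S$, the two differ by the global sign $(-1)^d$, which squares away. (As a by-product one sees, e.g., that $S A_s(\la)^{*}S = A_s(-\overline\la)$, $C_s(\la)^{*} = -S\,B_s(-\overline\la)$, $D_s(\la)^{*}=D_s(-\overline\la)$ in the para-Hermitian case, which fits with $\widehat A_s(0)=-\widehat T$ and $\widehat T$ being Hermitian there.)

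The second step descends the identity from $L_s(\la)$ to $\widehat L_s(\la)$. By the construction in Theorem~\ref{deflate}, extracting the nonzero block of the compressed pencil \eqref{compressed} gives $\widehat L_s(\la) = \widehat U^{*} L_s(\la)\, \widehat V$ with $\widehat U := \diag{U_2,\, I_m}$ and $\widehat V := \diag{V_2,\, I_m}$. The choices $U=V$ (Hermitian, skew-Hermitian) and $U=SV$ (para cases) supplied by Lemma~\ref{lem:pol} mean precisely that $U_2=V_2$, respectively $U_2=SV_2$, i.e. that $\widehat U = \Sigma\, \widehat V$ in all four cases. Since $\Sigma$ is a real diagonal matrix with $\Sigma^{2}=I$, the first step rearranges to $[L_s(\la)]^{*}\Sigma = \epsilon\, \Sigma\, L_s(\mu(\la))$, and therefore
\begin{align*}
[\widehat L_s(\la)]^{*} &= \widehat V^{*} [L_s(\la)]^{*} \widehat U = \widehat V^{*} [L_s(\la)]^{*} \Sigma\, \widehat V \\
&= \epsilon\, \widehat V^{*} \Sigma\, L_s(\mu(\la))\, \widehat V = \epsilon\, (\Sigma\, \widehat V)^{*} L_s(\mu(\la))\, \widehat V = \epsilon\, \widehat U^{*} L_s(\mu(\la))\, \widehat V = \epsilon\, \widehat L_s(\mu(\la)),
\end{align*}
which is the asserted structure of $\widehat L_s(\la)$ and finishes the argument. (One may instead argue block by block, using $\widehat A_s(\la)=V_2^{*}SA_s(\la)V_2$ in the para cases and the by-product identities above, and similarly for $\widehat B_s$, $\widehat C_s$, $\widehat D_s$; the matrix computation above simply packages all four blocks at once.)
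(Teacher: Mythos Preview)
Your proposal is correct and follows essentially the same route as the paper: first establish the self-conjugate symmetry of the large pencil $L_s(\la)$ (the paper does this coefficient-wise on $L_0$ and $L_1$, you package it as the single identity $\Sigma[L_s(\la)]^*\Sigma=\epsilon\,L_s(\mu(\la))$, which is equivalent), and then pass to $\widehat L_s(\la)$ using the relation $U=\Sigma V$ supplied by Lemma~\ref{lem:pol}. The only cosmetic difference is that the paper compresses with the full $\diag{U^*,I_m}$, $\diag{V,I_m}$ and then extracts the nonzero block, whereas you write $\widehat L_s(\la)=\widehat U^*L_s(\la)\widehat V$ directly with $\widehat U=\diag{U_2,I_m}$, $\widehat V=\diag{V_2,I_m}$; both are the same computation.
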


\begin{proof}
Let us denote the original pencil  in \eqref{lancaster} as
$$  L_s(\la)=L_0 + \la L_1, $$
then we have the following properties in the four self-conjugate cases~:
 \begin{enumerate}
 \item for Hermitian $P(\la)$, $$L_0^*=L_0, \quad \mathrm{and} \quad L_1^*=L_1,$$
 \item for skew-Hermitian $P(\la)$, $$L_0^*=-L_0, \quad \mathrm{and} \quad L_1^*=-L_1,$$
 \item for para-Hermitian $P(\la)$,
          $$L_0^*\diag{S,I_m}=\diag{S,I_m}L_0, \quad \mathrm{and} \quad L_1^*\diag{S,I_m}=-\diag{S,I_m}L_1,$$
 \item for para-skew-Hermitian $P(\la)$,
  $$L_0^*\diag{S,I_m}=-\diag{S,I_m}L_0, \quad \mathrm{and} \quad L_1^*\diag{S,I_m}=\diag{S,I_m}L_1.$$
 \end{enumerate}
If we choose in the first two cases $U=V$, and in the last two cases $U=SV$ then we obtain for the transformed pair of matrices
$$\tilde L_0:=\diag{U^*,I_m}L_0\diag{V,I_m}, \quad \mathrm{and} \quad   \tilde L_1:=\diag{U^*,I_m}L_1\diag{V,I_m}$$
the properties
 \begin{enumerate}
 \item for Hermitian $P(\la)$, $$\tilde L_0^*=\tilde L_0, \quad \mathrm{and} \quad \tilde L_1^*=\tilde L_1,$$
 \item for skew-Hermitian $P(\la)$, $$\tilde L_0^*=-\tilde L_0, \quad \mathrm{and} \quad \tilde L_1^*=-\tilde L_1,$$
 \item for para-Hermitian $P(\la)$,
           $$\tilde L_0^*=\tilde L_0, \quad \mathrm{and} \quad \tilde L_1^*=-\tilde L_1,$$
 \item for para-skew-Hermitian $P(\la)$,
  $$\tilde L_0^*=-\tilde L_0, \quad \mathrm{and} \quad \tilde L_1^*=\tilde L_1,$$
 \end{enumerate}
 and moreover, their first $m(d-1)-r$ columns and rows are zero because of \eqref{rankr}.
 The pencil $\tilde L_s(\la)$ thus has the same self-conjugate structure as $P(\la)$, and so does the deflated pencil $\widehat L_s(\la)$.
 The strong minimality follows from Theorem \ref{deflate}.
\end{proof}

\begin{remark} \label{rem.symmtheorem} If in Theorem \ref{Pol}, the polynomial matrix $P(\la) \in \mathbb{R} [\la]^{m \times m}$ has real coefficients and is symmetric, skew-symmetric, para-symmetric or para-skew-symmetric, then we can take $V$ and $U$ real. So, $\widehat{L}_s (\la)$ has also real coefficients and the same structure as $P(\la)$.
\end{remark}

The use of {\em strongly minimal linearizations} allows us to prove in Theorem \ref{Pol} a much stronger result for the considered classes of structured polynomial matrices than those available in the literature for {\em GLR-strong linearizations} for the same structures (see, for instance, \cite{bueno-structured,DeTDM,PartI,hmmt2006,GoodVibrations,MMMM-alternating}). We emphasize that Theorem \ref{Pol} provides a simple recipe for constructing a structure preserving strongly minimal linearization for {\em any polynomial matrix} with the considered structures. In contrast, such generality is not possible by using GLR-strong linearizations. For instance, it is shown in \cite{GoodVibrations,MMMM-alternating} that there exist para-symmetric and para-skew-symmetric polynomial matrices of {\em even degree} for which there do not exist any GLR-strong linearization with the same structure. Moreover, it is proved in \cite[Section 7]{DeTDM} that structure preserving GLR-strong linearizations of size $dm \times dm$ do not exist for $m \times m$ polynomial matrices $P(\la)$ of {\em even degree} $d$ possessing any of the self-conjugate structures studied in this paper whenever $P(\la)$ has an {\em odd} number of left minimal indices and an {\em odd} number of right minimal indices. Note that $dm \times dm$ is precisely the size of all explicitly and easily constructible families of GLR-strong linearizations available in the literature for $m\times m$ matrix polynomials of degree $d$, as, for instance, ``vector space linearizations'' \cite{fassbender-saltenberger,hmmt2006,mmmm2006,GoodVibrations}, ``Fiedler-like linearizations'' \cite{Greeks2,bueno-structured,bueno-fiedler-like} and ``block Kronecker linearizations'' \cite{DLPV18,PartI}. Thus, the result in \cite[Section 7]{DeTDM} proves that there exist self-conjugate structured polynomial matrices that cannot be linearized in these families in a structure preserving way. In fact, some of these families of GLR-strong linearizations present other drawbacks as, for instance, the ones considered in \cite{hmmt2006,GoodVibrations} are not valid for any singular polynomial matrix \cite{DeTDM2009} or require certain nonsingularity conditions on the coefficients of the polynomial matrix or on the location of its eigenvalues.

The following example illustrates the discussion in the previous paragraph.
\begin{example} Consider the para-symmetric regular polynomial matrix
\begin{equation} \label{eq.parasymmexample}
P (\la) =
\la^2 \begin{bmatrix}
        1 & 0 \\
        0 & 0
      \end{bmatrix}
- \begin{bmatrix}
        0 & 0 \\
        0 & 1
      \end{bmatrix} = \la^2 P_2 + P_0.
\end{equation}
It is proved via a neat rank argument in \cite[Example 1.4]{MMMM-alternating} that $P(\la)$ does not admit any para-symmetric GLR-strong linearization nor any para-skew-symmetric GLR-strong linearization. However, by using Theorem \ref{Pol} we can easily construct a para-symmetric strongly minimal linearization of $P(\la)$ as follows. Note that, in this case, $T= P_2$, $S = -I_2$ and we can take $V = \begin{bmatrix}
        0 & 1 \\
        1 & 0
      \end{bmatrix}
      $
and $U = -V$. Thus,
\begin{align*}
\mathrm{diag} (U^T, I_2)  \,  L_s(\la) \, \mathrm{diag} (V, I_2) & =
\mathrm{diag} (U^T, I_2)  \, \left[\begin{array}{c|c} -P_2 & \la P_2 \\  \hline \phantom{\Big|} \la P_2 &   P_0 \end{array}\right]  \, \mathrm{diag} (V, I_2)\\
& =
\left[\begin{array}{rr|rr}
0 & 0 & 0 & 0\\
0 & 1 & -\la & 0\\ \hline
0 & \la & 0 & 0\\
0 & 0 & 0 & -1\\
\end{array}\right]
\end{align*}
and
$$
\widehat{L}_s (\la) = \left[\begin{array}{r|rr}
 1 & -\la & 0\\ \hline
 \la & 0 & 0\\
 0 & 0 & -1\\
\end{array}\right]
$$
is a para-symmetric strongly minimal linearization of $P(\la)$. It is obvious that $P(\la)$ and $\widehat{L}_s (\la)$ have the same finite eigenvalues with the same partial multiplicities because both have $\la = 0$ as unique finite eigenvalue with partial multiplicity $2$. Observe that $P(\la)$ has also an eigenvalue at infinity with partial multiplicity $2$. This infinite structure can be recovered from $\widehat{L}_s (\la)$ by using Theorem \ref{theo:recover_infinity} or, better, the simpler Corollary \ref{cor.quadraticinfty} specific for quadratic polynomials. For this purpose, note that, with the notation of Corollary \ref{cor.quadraticinfty}, $r_P = 2$ and $r_2 = 1$. Moreover, $\mathrm{rev}_1 \widehat{L}_s (\la)$ has only one partial multiplicity at $0$ equal to $1$. So, $u = 1$ in Corollary \ref{cor.quadraticinfty}, which yields that $\mathrm{rev}_2 P(\la)$ has $2$ as its unique partial multiplicity at $0$. Note that $P(\la)$ in \eqref{eq.parasymmexample} is also symmetric. It is easy to construct infinitely many symmetric GLR-strong linearizations of $P(\la)$ \cite{hmmt2006}. All of them have necessarily size $4 \times 4$. In contrast, the symmetric strongly minimal linearization constructed by using Theorem \ref{Pol} has size $3 \times 3$.
\end{example}

\begin{remark}  We remark that the procedure presented in Theorem \ref{deflate} has an interpretation in terms of strongly minimal linear realizations of strictly proper rational matrices that have all its poles at $0$. To see that, we apply the change of variable $\la = 1/\mu$ to the system matrix
		$$  \left[\begin{array}{c|c}  \la E - F & \la G \\ \hline \phantom{\Big|}  \la H & 0 \end{array}\right]:=  \left[\begin{array}{c|c}  \widehat A_s(\la) & -\widehat B_s(\la) \\ \hline  \phantom{\Big|} \widehat C_s(\la) & 0 \end{array}\right]
		$$ and we multiply it by $\mu$. Then, we obtain a new linear polynomial system matrix
		$$\left[\begin{array}{c|c}   E - \mu F &  G \\ \hline \phantom{\Big|}   H & 0 \end{array}\right],$$
whose transfer function matrix is
		\begin{equation} \label{eq.remminral}
		H (\mu F- E)^{-1}G = P_2 \, \mu^{-1}  +  P_3 \,  \mu^{-2} + \cdots +  P_d \, \mu^{-(d-1)} .
		\end{equation}
		It can be proved that the new system is also strongly minimal. This means that the triple $\{  E - \mu F, -G,H\}$ is a strongly minimal linear realization of the strictly proper transfer function in \eqref{eq.remminral}, which has all its poles at $\mu =0$. Moreover, the minimal degree of $\det (E-\mu F)$ is known to be $r=\rank{T}$,  since, according to \eqref{eq.widehatAs}, $\widehat A_s (0) = -F = -\widehat T$ is nonsingular.
	\end{remark}

In this section and in the previous one, we have focused on polynomial matrices. We give a general procedure for the construction of strongly minimal linearizations of arbitrary strictly proper rational matrices in Section \ref{sec:sproper}.

\section{Constructing strongly minimal linearizations of strictly proper rational matrices} \label{sec:sproper}
Strictly proper rational matrices $R_{sp}(\la)\in \BC(\la)^{m\times n}$ can be represented via a Laurent expansion around the point at infinity~:
\begin{equation}\label{eq:sproper}
 R_{sp}(\la):= R_{-1} \la ^{-1} + R_{-2} \la ^{-2} +  R_{-3} \la ^{-3} + \cdots .
\end{equation}
In this section, we obtain strongly minimal linearizations for strictly proper rational matrices represented as in \eqref{eq:sproper} by using the algorithm in \cite[Section 3.4]{real}, as we explain in the sequel. Let the block Hankel matrix $H$ and shifted block Hankel matrix  $H_\sigma$ associated with $R_{sp}(\la)$ be denoted as
\begin{equation} \label{Hankel} H :=  \left[\begin{array}{cccc} R_{-1} & R_{-2}  & \ldots & R_{-k}  \\ [+2mm] R_{-2}  &  & \adots &  R_{-k-1} \\   [+2mm]
\vdots & \adots & \adots & \vdots \\  [+2mm] R_{-k} &  R_{-k-1}  & \ldots & R_{-2k+1}
\end{array}\right], \;  H_\sigma :=  \left[\begin{array}{cccc} R_{-2} & R_{-3}  & \ldots & R_{-k-1}  \\ [+2mm] R_{-3}  &  & \adots &  R_{-k-2} \\   [+2mm]
\vdots & \adots & \adots & \vdots \\  [+2mm] R_{-k-1} &  R_{-k-2}  & \ldots & R_{-2k}
\end{array}\right].
\end{equation}
Then for sufficiently large $k$ the rank $r_f$ of $H$ equals the total polar degree of the finite poles, i.e., the sum of the degrees of the denominators in the Smith-McMillan form of $R_{sp}(\la)$ \cite{Kai80}. We assume in the sequel that we are taking such a sufficiently large $k$. The general theory for Hankel based realizations of proper rational matrices in \cite[Section 3.4]{real}, implies that the following Rosenbrock linear system matrix in Theorem \ref{th:lin_sproper} is a strongly minimal linearization for the strictly proper rational matrix $R_{sp}(\la)$.

\begin{theorem} \label{th:lin_sproper}
	Let $R_{sp}(\la)\in \BC(\la)^{m\times n}$ be a strictly proper rational matrix as in \eqref{eq:sproper}. Let $H$ and $H_\sigma$  be the block Hankel matrices in \eqref{Hankel} and $r_f:=\rank H$. Let $U:=\left[\begin{array}{cc}U_1 & U_2\end{array}\right]$ and $V:=\left[\begin{array}{cc}V_1 &  V_2\end{array}\right]$ be unitary matrices such that
	\begin{equation} \label{rankrf}
	U^*HV =\left[\begin{array}{cc} \widehat H &  0 \\ 0 & 0 \end{array}\right] =\left[\begin{array}{cc} U_1^*HV_1 &  0 \\ 0 & 0 \end{array}\right],
	\end{equation}
	where $\widehat H$ is $r_f \times r_f$ and invertible. Let us now partition the matrices $U_1$ and $V_1$ as follows
	\begin{equation} \label{partition} U_1=\left[\begin{array}{cc} U_{11} \\ U_{21} \end{array}\right], \quad \mathrm{and} \quad V_1=\left[\begin{array}{cc} V_{11} \\ V_{21} \end{array}\right],
	\end{equation}
	where the matrices $U_{11}$ and $V_{11}$ have dimension $m\times r_f$ and $n\times r_f$, respectively. Then
	\begin{equation} \label{realize} L_{sp}(\la):= \left[\begin{array}{c|c} U_1^*H_\sigma V_1 - \la \widehat H  &  \widehat H V_{11}^* \\ \hline \phantom{\Big|} U_{11}\widehat H & 0 \end{array}\right]
	\end{equation} is a strongly minimal linearization for $R_{sp}(\la)$. In particular, $R_{sp}(\la) =  U_{11}\widehat H(\la \widehat H- U_1^*H_\sigma V_1)^{-1}\widehat H V_{11}^*.$ \end{theorem}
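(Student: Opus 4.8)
The plan is to recognize $L_{sp}(\la)$ as a linear Rosenbrock form of a minimal state-space realization of $R_{sp}(\la)$ obtained by the classical Ho--Kalman realization procedure, and then to check directly the conditions in Definition~\ref{min} together with the transfer-function identity, essentially reproducing the general theory of Hankel-based realizations in \cite{real}. Since $R_{sp}(\la)$ is strictly proper, it has no pole at infinity, so its McMillan degree equals its finite polar degree, which for $k$ large enough is $r_f=\rank{H}$; hence there is a minimal realization $R_{sp}(\la)=C_0(\la I_{r_f}-A_0)^{-1}B_0$ whose Markov parameters are exactly the Laurent coefficients, $R_{-j}=C_0A_0^{j-1}B_0$ for $j\ge1$. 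First I would record the standard factorizations $H=\mathcal{O}_0\mathcal{C}_0$ and $H_\sigma=\mathcal{O}_0A_0\mathcal{C}_0$, where $\mathcal{O}_0:=[\,C_0^{T},(C_0A_0)^{T},\dots,(C_0A_0^{k-1})^{T}\,]^{T}$ and $\mathcal{C}_0:=[\,B_0,A_0B_0,\dots,A_0^{k-1}B_0\,]$; for $k$ larger than the observability and controllability indices both factors have full rank $r_f$, and their first block row and first block column are $C_0$ and $B_0$, respectively.

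Next, \eqref{rankrf} gives the alternative rank factorization $H=U_1\widehat H V_1^{*}$, so comparing the two rank factorizations produces a unique invertible $T\in\BC^{r_f\times r_f}$ with $\mathcal{O}_0=U_1T$ and $\mathcal{C}_0=T^{-1}\widehat H V_1^{*}$ (take $T=U_1^{*}\mathcal{O}_0$ and use $U_1^{*}U_1=I_{r_f}$). Extracting first blocks and using $V_1^{*}V_1=I_{r_f}$ then yields $C_0=U_{11}T$, $B_0=T^{-1}\widehat H V_{11}^{*}$, and $U_1^{*}H_\sigma V_1=TA_0T^{-1}\widehat H$. With $S:=\widehat H^{-1}T$ and $\widehat A:=\widehat H^{-1}U_1^{*}H_\sigma V_1$, $\widehat B:=V_{11}^{*}$, $\widehat C:=U_{11}\widehat H$, these relations give $\widehat A=SA_0S^{-1}$, $\widehat B=SB_0$, $\widehat C=C_0S^{-1}$, so $(\widehat A,\widehat B,\widehat C)$ is the minimal realization $(A_0,B_0,C_0)$ written in new state coordinates and is therefore itself minimal, i.e.\ controllable and observable. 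Because $U_1^{*}H_\sigma V_1-\la\widehat H=-\widehat H(\la I_{r_f}-\widehat A)$ with $\widehat H$ invertible, the $(1,1)$ block of $L_{sp}(\la)$ is regular, and its transfer function is $\widehat C(\la\widehat H-U_1^{*}H_\sigma V_1)^{-1}\widehat H V_{11}^{*}=\widehat C(\la I_{r_f}-\widehat A)^{-1}\widehat B=C_0(\la I_{r_f}-A_0)^{-1}B_0=R_{sp}(\la)$, which also follows from Lemma~\ref{transfer}.

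It remains to verify strong minimality, i.e.\ the two conditions in \eqref{condlocal} of Definition~\ref{min}. Writing $A(\la)=U_1^{*}H_\sigma V_1-\la\widehat H$, the coefficient of $\la$ in $\bigl[\,A(\la)\ \ \widehat H V_{11}^{*}\,\bigr]$ is $[\,-\widehat H\ \ 0\,]$, of full row rank $r_f$, so this pencil has degree exactly $1$ and no eigenvalue at infinity; for finite $\la_0$, $\bigl[\,A(\la_0)\ \ \widehat H V_{11}^{*}\,\bigr]=\widehat H\bigl[\,-(\la_0I_{r_f}-\widehat A)\ \ \widehat B\,\bigr]$ has full row rank $r_f$ by the Hautus controllability test for the minimal pair $(\widehat A,\widehat B)$. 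Dually, the coefficient of $\la$ in $\bigl[\begin{smallmatrix}A(\la)\\ U_{11}\widehat H\end{smallmatrix}\bigr]$ is $\bigl[\begin{smallmatrix}-\widehat H\\ 0\end{smallmatrix}\bigr]$, of full column rank, and for finite $\la_0$, $\bigl[\begin{smallmatrix}A(\la_0)\\ U_{11}\widehat H\end{smallmatrix}\bigr]=\bigl[\begin{smallmatrix}-\widehat H & 0\\ 0 & I_m\end{smallmatrix}\bigr]\bigl[\begin{smallmatrix}\la_0I_{r_f}-\widehat A\\ \widehat C\end{smallmatrix}\bigr]$ has full column rank by the Hautus observability test for $(\widehat A,\widehat C)$. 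Hence $L_{sp}(\la)$ is strongly E-controllable and strongly E-observable, and since its transfer function is $R_{sp}(\la)$ it is a strongly minimal linearization of $R_{sp}(\la)$. I expect the only genuinely delicate point to be the very first step: making precise that $k$ may be chosen large enough that $\rank{H}=r_f$ equals the McMillan degree of $R_{sp}(\la)$ and that $\mathcal{O}_0$, $\mathcal{C}_0$ both attain rank $r_f$; once that is in hand, everything reduces to the Ho--Kalman correspondence and the Hautus rank tests, exactly as in \cite{real}.
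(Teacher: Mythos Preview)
Your proposal is correct and follows essentially the same approach as the paper: the paper does not give a detailed argument but simply invokes the general Hankel-based (Ho--Kalman) realization theory of \cite[Section~3.4]{real}, and your write-up is precisely an explicit unpacking of that reference---the factorizations $H=\mathcal{O}_0\mathcal{C}_0$, $H_\sigma=\mathcal{O}_0A_0\mathcal{C}_0$, the comparison of rank factorizations, and the Hautus tests are exactly the ingredients underlying that theory. Your caveat about choosing $k$ large enough so that $\rank H$ equals the McMillan degree and $\mathcal{O}_0,\mathcal{C}_0$ have full rank $r_f$ is also the assumption the paper makes (``for sufficiently large $k$'').
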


\begin{remark} \label{rem.state-space}
The algorithm in \cite[Section 3.4]{real} actually constructs a minimal state-space realization $R_{sp} (\la) = \widetilde C (\widetilde A - \la I)^{-1} \widetilde B$ in the classical sense of Rosenbrock, which can be easily transformed  into a minimal state-space realization of the form $R_{sp} (\la) =C (A-\la E )^{-1} B$ with $E$ invertible. We emphasize that,  given {\em any} minimal state-space realization, the associated linear polynomial system matrix
$\begin{bmatrix} A- \la E   & -B \\C & 0\end{bmatrix}$ is a strongly minimal linearization of $R_{sp} (\la)$ due to the minimality  of the realization and the invertibility of $E$.
\end{remark}

\section{Constructing strongly minimal linearizations of self-conjugate strict\-ly proper rational matrices} \label{sec:selft-conjugate-sproper}	
 If a strictly proper rational matrix $R_{sp}(\la)\in\BC(\la)^{m \times m}$ has one of the four self-conjugate structures considered in this paper, we use Theorem \ref{th:lin_sproper} and the ideas developed in \cite[Remark 6.6, Remark 8.4]{DMQ} to construct strongly mininimal self-conjugate linearizations for $R_{sp}(\la)$. First notice that the block Hankel matrices $H$ and $H_\sigma$ in \eqref{Hankel} have the following self-conjugate property depending on that of $R_{sp}(\la)$.
\begin{lemma}  \label{lem:rat} Let $R_{sp}(\la)\in\BC(\la)^{m \times m}$ be a strictly proper rational matrix as in \eqref{eq:sproper}. Let us define the scaling matrix $S:=\diag{-I_m, (-1)^2I_m,\ldots,(-1)^{k}I_m}$. Then the block Hankel matrices $H$ and $H_\sigma$ in \eqref{Hankel} satisfy the following equations
 \begin{enumerate}
 \item for Hermitian $R_{sp}(\la)$:  $R_{-i}^*=R_{-i}$, $H^*=H$ and $H_\sigma^*=H_\sigma$,
 \item for skew-Hermitian $R_{sp}(\la)$: $R_{-i}^*=-R_{-i}$, $H^*=-H$ and $H_\sigma^*=-H_\sigma$,
 \item for para-Hermitian $R_{sp}(\la)$:  $R_{-i}^*=(-1)^iR_{-i}$, $(SH)^*=-SH$  and $(SH_\sigma)^*=SH_\sigma$,
 \item for para-skew-Hermitian $R_{sp}(\la)$:  $R_{-i}^*=(-1)^{(i+1)}R_{-i}$, $(SH)^*=SH$  and $(SH_\sigma)^*=-SH_\sigma$.
 \end{enumerate}
For each of these cases, the left and right transformations $U$ and $V$  in \eqref{rankrf} can be chosen as $U=V$ in the Hermitian and skew-Hermitian cases and as $U=SV$ in the para-Hermitian and para-skew-Hermitian cases.
\end{lemma}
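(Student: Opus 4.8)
The plan is to first read off the symmetry of the Laurent coefficients $R_{-i}$ from the self-conjugate identity defining each structure, then transfer it to the block Hankel matrices $H$ and $H_\sigma$ by a direct block-index computation, and finally deduce the statement about $U$ and $V$ from the spectral theorem for normal matrices. For the first step, I would substitute the Laurent expansion \eqref{eq:sproper} into the defining identity and match coefficients of $\overline{\la}^{-i}$, using that $[R_{sp}(\la)]^*=\sum_{i\ge 1}R_{-i}^*\,\overline{\la}^{-i}$ and the uniqueness of the Laurent expansion around $\infty$. The Hermitian identity $[R_{sp}(\la)]^*=R_{sp}(\overline{\la})$ gives $R_{-i}^*=R_{-i}$, and the para-Hermitian identity $[R_{sp}(\la)]^*=R_{sp}(-\overline{\la})$ gives $R_{-i}^*=(-1)^{-i}R_{-i}=(-1)^iR_{-i}$; the two skew variants are obtained by multiplying the right-hand side by $-1$, yielding $R_{-i}^*=-R_{-i}$ and $R_{-i}^*=(-1)^{i+1}R_{-i}$, respectively.

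Next, I would record the block structure: for $p,q=1,\ldots,k$ the $(p,q)$ block of $H$ is $R_{-(p+q-1)}$, that of $H_\sigma$ is $R_{-(p+q)}$, and the $p$-th diagonal block of $S$ is $(-1)^pI_m$. Conjugate transposition swaps the block indices, so the $(p,q)$ block of $H^*$ is $R_{-(p+q-1)}^*$ and that of $(SH)^*$ is $(-1)^qR_{-(p+q-1)}^*$, and analogously for $H_\sigma$. In the Hermitian and skew-Hermitian cases $R_{-i}^*=\pm R_{-i}$ with a sign independent of $i$, so $H^*=\pm H$ and $H_\sigma^*=\pm H_\sigma$ follow at once. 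In the para-Hermitian case, inserting $R_{-(p+q-1)}^*=(-1)^{p+q-1}R_{-(p+q-1)}$ turns the $(p,q)$ block of $(SH)^*$ into $(-1)^{p+2q-1}R_{-(p+q-1)}=-(-1)^pR_{-(p+q-1)}$, i.e. minus the $(p,q)$ block of $SH$, so $(SH)^*=-SH$; the identical computation with $p+q$ in place of $p+q-1$ gives $(SH_\sigma)^*=SH_\sigma$. The para-skew-Hermitian case uses $R_{-i}^*=(-1)^{i+1}R_{-i}$ instead, which flips both signs, giving $(SH)^*=SH$ and $(SH_\sigma)^*=-SH_\sigma$.

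For the choice of the transformations, in the Hermitian and skew-Hermitian cases $H$ is normal (being Hermitian or skew-Hermitian), so there is a unitary $W$ with $W^*HW$ diagonal; ordering the eigenvalues so that the $r_f=\rank{H}$ nonzero ones appear first makes $W^*HW=\diag{\widehat H,0}$ with $\widehat H$ the invertible diagonal block of nonzero eigenvalues, which is precisely \eqref{rankrf} with $U=V=W$. In the para-Hermitian and para-skew-Hermitian cases, the previous paragraph shows that $SH$ is normal, hence $SH=W\Lambda W^*$ with $W$ unitary and $\Lambda$ diagonal, again ordered so that the nonzero eigenvalues come first; since $S$ is a real symmetric involution, $S^*=S^{-1}=S$, so $H=S\,W\Lambda W^*$, and taking $V:=W$ and $U:=SW=SV$ (both unitary) yields $U^*HV=W^*S^*SW\Lambda=\Lambda=\diag{\widehat H,0}$, which is \eqref{rankrf} with $U=SV$. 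The only delicate point in the whole argument is the sign bookkeeping in the para-Hermitian and para-skew-Hermitian parts of the computation for $H$ and $H_\sigma$; everything else is routine once the block-index formulas for $H$, $H_\sigma$ and $S$ are written down.
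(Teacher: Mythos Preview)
Your proof is correct and follows essentially the same approach as the paper's: the paper simply asserts that the coefficient symmetries ``trivially yield'' the stated symmetries of $H$ and $H_\sigma$, and then invokes normality of $H$ (resp.\ $SH$) to justify the choice $U=V$ (resp.\ $U=SV$). You have spelled out in full the block-index computations and the spectral-theorem argument that the paper leaves implicit, but the underlying strategy is identical.
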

\begin{proof}
The symmetries of the coefficient matrices $R_i$ trivially yield the four types of symmetries of $H$ and $H_\sigma$. For the rank compression \eqref{rankrf}, we can then choose $U=V$ in the Hermitian and skew-Hermitian cases because $H$ is normal, and we can choose $U=SV$ in the para-Hermitian and para-skew-Hermitian cases because $SH$ is then normal.
\end{proof}

We now can use Lemma \ref{lem:rat} and Theorem \ref{th:lin_sproper} to obtain the following structured strongly minimal linearizations of strictly proper rational matrices for the four considered structures.
\begin{theorem} \label{Rsp}
Let $R_{sp}(\la)\in \BC(\la)^{m\times m}$ be a strictly proper rational matrix as in
\eqref{eq:sproper}, and let $H$ and $H_{\sigma}$ be the associated block Hankel matrices appearing in \eqref{Hankel}. Let $U,V$ be the unitary matrices appearing in \eqref{rankrf}, where $U=V$ if $R_{sp}(\la)$ is Hermitian or skew-Hermitian, and $U=SV$ if $R_{sp}(\la)$ is para-Hermitian or para-skew-Hermitian, and where $S:=\diag{-I_m, (-1)^2I_m,\ldots,(-1)^{k}I_m}$. Finally, let $U_1, V_1$ be the $km\times r_f$ matrices formed by the first $r_f$ columns of $U$ and $V$, respectively, $\widehat H$ be the $r_f\times r_f$ matrix defined in \eqref{rankrf} and $U_{11},V_{11}$ be the $m\times r_f$ matrices defined in \eqref{partition}.
\begin{enumerate}
 \item If $R_{sp}(\la)$ is Hermitian, then $\widehat H^*=\widehat H$, $H_\sigma^* =  H_\sigma$, and
$$ L_{sp}(\la):=  \left[\begin{array}{c|c} V_1^*H_\sigma V_1 - \la \widehat H  &  \widehat H V_{11}^* \\ \hline \phantom{\Big|} V_{11}\widehat H & 0 \end{array}\right] $$
is a Hermitian strongly minimal linearization of $R_{sp}(\la)$.
 \item If $R_{sp}(\la)$ is skew-Hermitian, then  $\widehat H^*=-\widehat H$, $H_\sigma^* =  -H_\sigma$, and
$$  L_{sp}(\la):=   \left[\begin{array}{c|c} V_1^*H_\sigma V_1- \la \widehat H  &  \widehat H V_{11}^* \\ \hline \phantom{\Big|} V_{11}\widehat H & 0 \end{array}\right] $$
is a skew-Hermitian strongly minimal linearization of $R_{sp}(\la)$.
 \item If $R_{sp}(\la)$ is para-Hermitian, then  $\widehat H^*=-\widehat H$, $(SH_\sigma)^* =  SH_\sigma$, and
$$ L_{sp}(\la):=    \left[\begin{array}{c|c} V_1^*SH_\sigma V_1 - \la \widehat H  &  \widehat H V_{11}^* \\ \hline \phantom{\Big|} -V_{11}\widehat H & 0 \end{array}\right] $$
is a para-Hermitian strongly minimal linearization of $R_{sp}(\la)$.
 \item If $R_{sp}(\la)$ is para-skew-Hermitian, then  $\widehat H^*=\widehat H$, $(SH_\sigma)^* = -SH_\sigma$, and
$$ L_{sp}(\la):=    \left[\begin{array}{c|c} V_1^*SH_\sigma V_1 - \la \widehat H &  \widehat H V_{11}^* \\ \hline \phantom{\Big|} -V_{11}\widehat H & 0 \end{array}\right] $$
is a para-skew-Hermitian strongly minimal linearization of $R_{sp}(\la)$.
\end{enumerate}
\end{theorem}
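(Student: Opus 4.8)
The idea is to mimic the proof of Theorem~\ref{Pol} step by step. By Lemma~\ref{lem:rat}, the self-conjugate symmetry of $R_{sp}(\la)$ forces the corresponding symmetry on the Hankel matrices $H$ and $H_\sigma$, and, crucially, it permits the \emph{special} choice $U=V$ (Hermitian and skew-Hermitian cases) or $U=SV$ (para-Hermitian and para-skew-Hermitian cases) in the rank-revealing compression \eqref{rankrf}. The observation that makes the whole argument work is that any unitary pair $U,V$ satisfying \eqref{rankrf} is admissible in Theorem~\ref{th:lin_sproper}, so with these particular $U,V$ the pencil $L_{sp}(\la)$ of \eqref{realize} is \emph{still} a strongly minimal linearization of $R_{sp}(\la)$. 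Consequently the only new thing to prove is that, for these $U$ and $V$, $L_{sp}(\la)$ has the same self-conjugate structure as $R_{sp}(\la)$; I would establish this by a direct block-wise conjugate transposition of \eqref{realize}.

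In the Hermitian and skew-Hermitian cases I would set $U=V$, so that $\widehat H = U_1^*HV_1 = V_1^*HV_1$, $U_{11}=V_{11}$, and $U_1^*H_\sigma V_1 = V_1^*H_\sigma V_1$; thus $L_{sp}(\la)$ becomes exactly the pencil displayed in parts (1) and (2). Writing $H^*=\varepsilon H$ and $H_\sigma^*=\varepsilon H_\sigma$ with $\varepsilon=+1$ (Hermitian) or $\varepsilon=-1$ (skew-Hermitian), one immediately gets $\widehat H^*=\varepsilon\widehat H$ and $(V_1^*H_\sigma V_1)^*=\varepsilon\, V_1^*H_\sigma V_1$. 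Conjugate-transposing \eqref{realize} block by block, and using $(\widehat H V_{11}^*)^* = V_{11}\widehat H^* = \varepsilon V_{11}\widehat H$ for the off-diagonal blocks, yields $[L_{sp}(\la)]^* = \varepsilon\, L_{sp}(\overline\la)$, which is precisely the defining identity of the Hermitian (resp.\ skew-Hermitian) structure.

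In the para-Hermitian and para-skew-Hermitian cases I would take $U=SV$ and use $S^*=S$, $S^2=I$, and the fact that the leading $m\times m$ block of $S$ is $-I_m$: hence $\widehat H = (SV_1)^*HV_1 = V_1^*(SH)V_1$, $U_{11}=-V_{11}$ (the first $m$ rows of $SV_1$), and $U_1^*H_\sigma V_1 = V_1^*(SH_\sigma)V_1$, so $L_{sp}(\la)$ is the pencil of parts (3) and (4). Lemma~\ref{lem:rat} gives $(SH)^*=-\eta\, SH$ and $(SH_\sigma)^*=\eta\, SH_\sigma$ with $\eta=+1$ (para-Hermitian) or $\eta=-1$ (para-skew-Hermitian), whence $\widehat H^*=-\eta\widehat H$ and $(V_1^*SH_\sigma V_1)^*=\eta\, V_1^*SH_\sigma V_1$. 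Conjugate-transposing \eqref{realize} block by block, the diagonal block becomes $V_1^*SH_\sigma V_1 - \overline\la\,\widehat H^* = \eta\bigl(V_1^*SH_\sigma V_1 - (-\overline\la)\widehat H\bigr)$, while the two off-diagonal blocks are interchanged picking up the factor $\widehat H^* = -\eta\widehat H$; collecting the signs gives $[L_{sp}(\la)]^* = \eta\, L_{sp}(-\overline\la)$, the para-Hermitian (resp.\ para-skew-Hermitian) identity. Strong minimality has already been secured for all four cases by Theorem~\ref{th:lin_sproper}.

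I do not expect a deep obstacle — the argument is essentially sign bookkeeping — but the point that genuinely needs care is the interplay of signs in the para cases. One must verify that the Hermitian/skew-Hermitian type of $\widehat H$ (inherited from $SH$) is exactly \emph{opposite} to that of the constant block $V_1^*SH_\sigma V_1$ (inherited from $SH_\sigma$): these blocks sit at $\la^1$ and $\la^0$, and the defining relation uses $\la\mapsto-\overline\la$, which flips the sign of the $\la^1$ coefficient only, so the structure can survive only if the two types are opposite. That they are is exactly what Lemma~\ref{lem:rat} provides, since passing from $H$ to $H_\sigma$ shifts the block indices by one and thereby swaps the conjugation behaviour of $SH$ and $SH_\sigma$; checking that this swap is compatible with the $\la\mapsto-\overline\la$ substitution is the crux. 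A last minor point is to keep track of the $-I_m$ in the leading block of $S$ from Lemma~\ref{lem:rat}, as it is responsible for $U_{11}=-V_{11}$ and hence for the $-V_{11}\widehat H$ block appearing in parts (3) and (4).
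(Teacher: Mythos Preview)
Your proposal is correct and follows essentially the same route as the paper's proof: invoke Lemma~\ref{lem:rat} to justify the special choices $U=V$ or $U=SV$, read off the resulting symmetries of $\widehat H$ and of $U_1^*H_\sigma V_1$, note that $U_{11}=V_{11}$ or $U_{11}=-V_{11}$, and then appeal to Theorem~\ref{th:lin_sproper} (equation~\eqref{realize}) for strong minimality. The paper's proof is terser --- it simply records the identities $\widehat H=V_1^*HV_1=\pm\widehat H^*$ (or $\widehat H=V_1^*SHV_1=\pm\widehat H^*$) and says ``the result follows from \eqref{realize}'' --- whereas you spell out the block-by-block conjugate transposition; but the substance is identical.
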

\begin{proof}
\begin{enumerate}
 \item If $R_{sp}(\la)$ is Hermitian, then Lemma \ref{lem:rat} and \eqref{rankrf} imply that $H^*=H$, $H_\sigma^* =  H_\sigma$, $U_1=V_1$, and $U_{11}=V_{11}$. The result then follows from $\widehat H=V_1^*HV_1=\widehat H^*$ and \eqref{realize}.
 \item If $R_{sp}(\la)$ is skew-Hermitian, then Lemma \ref{lem:rat} and \eqref{rankrf} imply that $H^*=-H$, $H_\sigma^* =  -H_\sigma$, $U_1=V_1$, and $U_{11}=V_{11}$. The result then follows from $\widehat H=V_1^*HV_1=-\widehat H^*$ and \eqref{realize}.
 \item If $R_{sp}(\la)$ is para-Hermitian, then Lemma \ref{lem:rat} and \eqref{rankrf} imply that $(SH)^*=-SH$, $(SH_\sigma)^* =  SH_\sigma$, $U_1=SV_1$, and $U_{11}=-V_{11}$. The result then follows from $\widehat H=V_1^*SHV_1=-\widehat H^*$ and \eqref{realize}.
 \item If $R_{sp}(\la)$ is para-skew-Hermitian, then Lemma \ref{lem:rat} and \eqref{rankrf} imply that  $(SH)^*=SH$, $(SH_\sigma)^* = -SH_\sigma$, $U_1=SV_1$, and $U_{11}=-V_{11}$. The result then follows from $\widehat H=V_1^*SHV_1=\widehat H^*$ and \eqref{realize}.
\end{enumerate}\end{proof}

\section{Constructing strongly minimal linearizations of arbitrary and self-conjugate rational matrices} \label{sec:rational}
	
	For any given rational matrix $R(\la)\in \BC(\la)^{m\times n}$ we assume that we have an additive decomposition into its polynomial part $P(\la)$ and its strictly proper part $R_{sp}(\la)$ as in \eqref{eq.polspdec}. That is,
	\begin{equation} \label{additive}  R(\la) = P(\la)  + R_{sp}(\la).
	\end{equation}
 For the Laurent expansion given in \eqref{Laurent}, this corresponds to
	$$ P(\la) := R_0 + R_1 \la + \cdots + R_d\la^d, \quad R_{sp}(\la):= R_{-1} \la ^{-1} + R_{-2} \la ^{-2} +  R_{-3} \la ^{-3} + \cdots
	$$
In this section, we obtain strongly minimal linearizations for arbitrary and structured rational matrices by combining strongly minimal linearizations for both parts. The construction for the polynomial part was given in Sections \ref{sec:linearization_polynomial} and \ref{sec:self-cong-poly}, and for the strictly proper part in Sections \ref{sec:sproper} and \ref{sec:selft-conjugate-sproper}.

We emphasize that both for the unstructured case and for the self-conjugate structured cases considered in this paper, we provide a completely general construction of strongly minimal linearizations, which in the structured cases preserve the structure. This means that the construction is valid for any rational matrix, unstructured or with the considered structures. To the best of our knowledge, this generality improves all the results previously available in the literature for the studied self-conjugate structures, as, for instance, those in \cite{GHNSVX02,das-alam-affine,DasAlamArXiv,DMQ}, which do not construct structured linearizations for all structured rational matrices. An important restriction in this setting appears in the recent works \cite{das-alam-affine,DasAlamArXiv,DMQ}, where for those structured rational matrices whose polynomial part has even degree, structure preserving strong linearizations are constructed only if the leading coefficient $R_d$ is nonsingular, which implies that $R(\la)$ must be regular. The use of the new concept of strongly minimal linearization is the reason why our approach is fully general.
	
Once we have strongly minimal linearizations for both the polynomial part and the strictly proper part of a given rational matrix, it is straightforward to construct a strongly minimal linearization for the sum, as shown below.

		\begin{theorem}\label{th:arbitrary_rat}
			Let $R(\la)\in \BC(\la)^{m\times n}$ be an arbitrary rational matrix, i.e., regular or singular. Let $R(\la)=P(\la) +R_{sp}(\la)$ where $P(\la)$ is the polynomial part of $R(\la)$ and $R_{sp}(\la)$ is the strictly proper part of $R(\la)$. Let
			\begin{equation}
				\widehat L_s(\la):=  \left[\begin{array}{c|c}  \widehat A_s(\la) & - \widehat B_s(\la) \\ \hline \phantom{\Big|}  \widehat C_s(\la) & \widehat D_s(\la) \end{array}\right], \quad \mathrm{and} \quad L_{sp}(\la):=  \left[\begin{array}{c|c} A_{sp}(\la) & - B_{sp}(\la) \\ \hline \phantom{\Big|}  C_{sp}(\la) & 0 \end{array}\right],
				\end{equation}
				be strongly minimal linearizations of $P(\la)$ and $R_{sp}(\la)$ as described in Theorems \ref{deflate} and \ref{th:lin_sproper}, respectively. Then
				\begin{equation}
				L(\la):=  \left[\begin{array}{cc|c} \widehat A_s(\la) & 0 & - \widehat B_s(\la) \\  0 & A_{sp}(\la) & -B_{sp}(\la) \\ \hline \phantom{\Big|} \widehat C_s(\la) &  C_{sp}(\la) & \widehat D_s(\la) \end{array}\right]
				\end{equation}
			is a strongly minimal linearization of $R(\la)$.
		\end{theorem}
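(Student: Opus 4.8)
The plan is to verify directly the three requirements defining a strongly minimal linearization (Definitions~\ref{min} and \ref{def.maquintstrongmin}): that the $(1,1)$-block of $L(\la)$ is regular, that its transfer function matrix is $R(\la)$, and that the pencils $\left[\begin{smallmatrix} A(\la) & -B(\la)\end{smallmatrix}\right]$ and $\left[\begin{smallmatrix} A(\la)\\ C(\la)\end{smallmatrix}\right]$ attached to $L(\la)$ have degree exactly $1$ and no finite or infinite eigenvalues. Two structural facts about the summands do the real work, and I would isolate them first. Since $\widehat L_s(\la)$ is a minimal linear polynomial system matrix of the \emph{polynomial} matrix $P(\la)$ and a polynomial matrix has no finite poles, Theorem~\ref{prop:finite} (see also Remark~\ref{rem:minimal-glr}) forces $\widehat A_s(\la)$ to be \emph{unimodular}, i.e.\ invertible at every finite $\la$. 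In the strictly proper linearization $L_{sp}(\la)$ built in Theorem~\ref{th:lin_sproper}, the blocks $B_{sp}(\la)$ and $C_{sp}(\la)$ are \emph{constant}, while the degree-$1$ coefficient of $A_{sp}(\la)$ (namely $-\widehat H$, or in any case a full-rank hence invertible matrix, since $\left[\begin{smallmatrix}A_{sp,1} & -B_{sp,1}\end{smallmatrix}\right]=\left[\begin{smallmatrix}A_{sp,1} & 0\end{smallmatrix}\right]$ must have full row rank by strong E-controllability of $L_{sp}$) is \emph{invertible}. Consequently $\diag{\widehat A_s(\la),A_{sp}(\la)}$, the $(1,1)$-block of $L(\la)$, is regular, so $L(\la)$ is a legitimate linear polynomial system matrix.

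For the transfer function I would compute the Schur complement of the $(1,1)$-block of $L(\la)$: because that block is block diagonal, so is its inverse, and the Schur complement splits as $\widehat D_s(\la)+\widehat C_s(\la)\widehat A_s(\la)^{-1}\widehat B_s(\la)+C_{sp}(\la)A_{sp}(\la)^{-1}B_{sp}(\la)=P(\la)+R_{sp}(\la)=R(\la)$, using that $\widehat L_s(\la)$ and $L_{sp}(\la)$ realize $P(\la)$ and $R_{sp}(\la)$ respectively.

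The heart of the matter is strong minimality, which I would establish by a short kernel chase. For strong E-controllability at a finite $\la_0$: a left null vector $[\,y_1^*\ \ y_2^*\,]$ of $\left[\begin{smallmatrix}\widehat A_s(\la_0) & 0 & -\widehat B_s(\la_0)\\ 0 & A_{sp}(\la_0) & -B_{sp}(\la_0)\end{smallmatrix}\right]$ satisfies $y_1^*\widehat A_s(\la_0)=0$, hence $y_1=0$ since $\widehat A_s(\la_0)$ is invertible, and then $y_2^*\left[\begin{smallmatrix}A_{sp}(\la_0) & -B_{sp}(\la_0)\end{smallmatrix}\right]=0$ gives $y_2=0$ by the strong E-controllability of $L_{sp}(\la)$. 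At infinity the leading coefficient is $\left[\begin{smallmatrix}\widehat A_{s,1} & 0 & -\widehat B_{s,1}\\ 0 & A_{sp,1} & 0\end{smallmatrix}\right]$ (here $B_{sp}(\la)$ is constant, so $B_{sp,1}=0$); a left null vector $[\,y_1^*\ \ y_2^*\,]$ now has $y_2=0$ since $A_{sp,1}$ is invertible, and then $y_1^*\left[\begin{smallmatrix}\widehat A_{s,1} & -\widehat B_{s,1}\end{smallmatrix}\right]=0$ gives $y_1=0$ by the strong E-controllability of $\widehat L_s(\la)$ at infinity. Strong E-observability is the transposed computation, again using that $\widehat A_s(\la_0)$ is invertible at the finite points and that $C_{sp}(\la)$ is constant at $\infty$. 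This shows $L(\la)$ is strongly minimal; together with the transfer-function computation, Definition~\ref{def.maquintstrongmin} gives that $L(\la)$ is a strongly minimal linearization of $R(\la)$, and Theorems~\ref{prop:finite}, \ref{theo:recover_infinity} and \ref{prop:minimalindices} then guarantee it carries the complete list of structural data of $R(\la)$.

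The only point that needs care is the \emph{asymmetric} use of the hypotheses. The analogous ``direct sum'' of two \emph{arbitrary} strongly minimal linearizations need not be strongly minimal: if the two transfer functions shared a pole $\la_0$, then $\left[\begin{smallmatrix}A(\la_0) & -B(\la_0)\end{smallmatrix}\right]$ would be rank deficient. The construction succeeds here precisely because one summand is the polynomial part, contributing a unimodular $A$-block with no finite zeros, and the other is the strictly proper part, contributing an $A$-block whose degree-$1$ coefficient is invertible and whose $B$- and $C$-blocks are constant, so that no infinite eigenvalue is created either. Invoking exactly these facts at the finite points and at $\infty$, respectively, is the delicate step; everything else is routine block bookkeeping.
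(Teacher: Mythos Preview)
Your proof is correct and follows essentially the same approach as the paper. The paper's own proof is very terse: it computes the transfer function exactly as you do and then asserts strong minimality in one sentence, noting only that ``the subsystems $\widehat L_s(\la)$ and $L_{sp}(\la)$ are strongly minimal and have no common poles'' and that ``$\widehat A_s (\la)$ is unimodular and \ldots\ the first degree coefficient of $A_{sp}(\la)$ is invertible.'' Your kernel chase is precisely the detailed unpacking of that sentence, resting on the same two structural facts you correctly isolated, and your closing remark about why the argument would fail for two arbitrary summands with a common pole is exactly the content of the paper's ``no common poles'' clause.
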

		\begin{proof}
			The transfer function of $L(\la)$ is clearly
		$$  \widehat D_{s}(\la) + \widehat C_{s}(\la)  \widehat A_{s}(\la)^{-1}  \widehat B_{s}(\la)  + C_{sp}(\la) A_{sp}(\la)^{-1}B_{sp}(\la) = P(\la)+
				R_{sp}(\la)   = R(\la).
				$$
		The strong minimality of $L(\la)$ follows from the fact that the subsystems $ \widehat L_s(\la)$ and $L_{sp}(\la)$ are strongly minimal and have no common poles. Observe, in particular, that $\widehat A_s (\la)$ is unimodular and that the first degree coefficient of $A_{sp} (\la)$ is invertible.
\end{proof}
	
	For the construction of self-conjugate strongly minimal linearizations of structured rational matrices we proceed in the same way and obtain the following result.
		\begin{theorem} \label{th:arbitrary_rat_struct}
			Let $R(\la)\in \BC(\la)^{m\times m}$ be an arbitrary rational matrix, i.e., regular or singular, which has one of the following structures~: Hermitian, skew-Hermitian, para-Hermitian or para-skew-Hermitian. Let $R(\la)=P(\la) +R_{sp}(\la)$ where $P(\la)$ is the polynomial part of $R(\la)$ and $R_{sp}(\la)$ is the strictly proper part of $R(\la)$. Let
		\begin{equation}
			\widehat L_s(\la):=  \left[\begin{array}{c|c}  \widehat A_s(\la) & - \widehat B_s(\la) \\ \hline \phantom{\Big|}  \widehat C_s(\la) & \widehat D_s(\la) \end{array}\right], \quad \mathrm{and} \quad L_{sp}(\la):=  \left[\begin{array}{c|c} A_{sp}(\la) & - B_{sp}(\la) \\ \hline \phantom{\Big|}  C_{sp}(\la) & 0 \end{array}\right],
			\end{equation}
			be strongly minimal linearizations of $P(\la)$ and $R_{sp}(\la)$ as described in Theorems \ref{Pol} and \ref{Rsp}, respectively, according to the corresponding structure of $R(\la)$. Then
			\begin{equation}
			L(\la):=  \left[\begin{array}{cc|c} \widehat A_s(\la) & 0 & - \widehat B_s(\la) \\  0 & A_{sp}(\la) & -B_{sp}(\la) \\ \hline \phantom{\Big|} \widehat C_s(\la) &  C_{sp}(\la) & \widehat D_s(\la) \end{array}\right]
			\end{equation}
			is a strongly minimal linearization of $R(\la)$ with the same self-conjugate structure as $R(\la)$.
		\end{theorem}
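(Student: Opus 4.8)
The plan is to reduce to Theorem~\ref{th:arbitrary_rat} and then add a short structural check. First, observe that the pencil $\widehat L_s(\la)$ given by Theorem~\ref{Pol} is a strongly minimal linearization of $P(\la)$ of exactly the kind produced by Theorem~\ref{deflate}: it is precisely $\widehat L_s(\la)$ in \eqref{eq.widehatL_s1} for the particular unitary choices $U=V$ or $U=SV$ in \eqref{eq.compressT}. Likewise, the pencil $L_{sp}(\la)$ given by Theorem~\ref{Rsp} is a strongly minimal linearization of $R_{sp}(\la)$ of the kind produced by Theorem~\ref{th:lin_sproper}: the choices $U=V$ and $U=SV$ in \eqref{rankrf} give $U_1=V_1,\ U_{11}=V_{11}$ and $U_1=SV_1,\ U_{11}=-V_{11}$ respectively (here $S$ is a real $\pm I$ block diagonal matrix, so $S^*=S$), so that \eqref{realize} specialises to the four formulas in Theorem~\ref{Rsp}. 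Hence Theorem~\ref{th:arbitrary_rat} applies directly and already shows that $L(\la)$ is a strongly minimal linearization of $R(\la)=P(\la)+R_{sp}(\la)$; alternatively one simply repeats the one-line argument of that theorem, namely that the transfer function of $L(\la)$ equals $\widehat D_s(\la)+\widehat C_s(\la)\widehat A_s(\la)^{-1}\widehat B_s(\la)+C_{sp}(\la)A_{sp}(\la)^{-1}B_{sp}(\la)=R(\la)$ and that strong minimality follows from the strong minimality of the two diagonal subsystems together with the fact that they have no common poles ($\widehat A_s(\la)$ is unimodular and the leading coefficient of $A_{sp}(\la)$ is invertible).

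It then remains only to check that $L(\la)$ inherits the self-conjugate structure of $R(\la)$. Since $P(\la)$ and $R_{sp}(\la)$ both inherit that structure from $R(\la)$, Theorems~\ref{Pol} and~\ref{Rsp} guarantee that $\widehat L_s(\la)$ and $L_{sp}(\la)$ are themselves pencils with the same structure. Writing $\widehat L_s(\la)=\widehat L_0+\la\widehat L_1$ and $L_{sp}(\la)=M_0+\la M_1$, this amounts to the coefficient identities $\widehat L_0^{\,*}=\varepsilon_0\widehat L_0$, $\widehat L_1^{\,*}=\varepsilon_1\widehat L_1$, $M_0^{\,*}=\varepsilon_0 M_0$, $M_1^{\,*}=\varepsilon_1 M_1$, where $(\varepsilon_0,\varepsilon_1)$ equals $(1,1)$, $(-1,-1)$, $(1,-1)$, $(-1,1)$ in the Hermitian, skew-Hermitian, para-Hermitian, and para-skew-Hermitian cases, respectively. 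I would emphasise here that the scaling matrices appearing in Theorems~\ref{Pol} and~\ref{Rsp} are only devices for selecting the unitary $U$ and $V$; the pencils $\widehat L_s(\la)$ and $L_{sp}(\la)$ produced by those theorems satisfy the plain coefficient identities displayed above.

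Finally I would write $L(\la)=L_0+\la L_1$ and read off from the block form of $L(\la)$ that, for $j\in\{0,1\}$, the coefficient $L_j$ is obtained from $\widehat L_j$ and $M_j$ by placing the two $(1,1)$-blocks $\widehat A_s$ and $A_{sp}$ on the block diagonal, collecting the two $C$-blocks into a single bottom block row, the two $(-B)$-blocks into a single right block column, placing the unique $D$-block $\widehat D_s$ (here $D_{sp}=0$) in the lower right corner, and setting to zero the blocks coupling the $\widehat L_s$-part to the $L_{sp}$-part. A direct block computation of $L_j^{\,*}$, using $\widehat L_j^{\,*}=\varepsilon_j\widehat L_j$, $M_j^{\,*}=\varepsilon_j M_j$ and $0^{\,*}=0$, then yields $L_j^{\,*}=\varepsilon_j L_j$ for $j=0,1$; hence $L(\la)$ carries exactly the claimed self-conjugate structure. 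I do not expect a genuine obstacle at any point: the only thing that requires care is matching, in each of the four structures, the sign that the identity $\widehat L_0^{\,*}=\varepsilon_0\widehat L_0$ (and its analogues for $M_0$, $\widehat L_1$, $M_1$) imposes between the $(1,2)$- and $(2,1)$-blocks, and confirming that the zero coupling blocks---and, in the para-cases, the scalings already absorbed into $\widehat L_s$ and $L_{sp}$---introduce no incompatibility, which they do not.
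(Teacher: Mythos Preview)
Your proposal is correct and follows essentially the same approach as the paper: first invoke Theorem~\ref{th:arbitrary_rat} (or repeat its short argument) to get that $L(\la)$ is a strongly minimal linearization of $R(\la)$, and then note that since $P(\la)$ and $R_{sp}(\la)$ inherit the structure of $R(\la)$, Theorems~\ref{Pol} and~\ref{Rsp} make $\widehat L_s(\la)$ and $L_{sp}(\la)$ structured, so the combined $L(\la)$ is structured as well. Your version is simply more explicit than the paper's two-sentence proof, spelling out the coefficient identities $L_j^{\,*}=\varepsilon_j L_j$ and the block verification that the paper leaves implicit.
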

		\begin{proof} $L(\la)$ is a strongly minimal linearization of $R(\la)$ by using the same proof as that of Theorem \ref{th:arbitrary_rat}. That the self-conjugate structure of $R(\la)$ and $L(\la)$ are the same, follows from the fact that the self-conjugate structures of $ \widehat L_s(\la)$ and $L_{sp}(\la)$ coincide with that of $R(\la)$. \end{proof}

\begin{remark} Note that Remark \ref{rem.state-space} implies that in Theorems \ref{th:arbitrary_rat} and \ref{th:arbitrary_rat_struct}, one can use the linear polynomial system matrix associated with {\em any} minimal state-space realization of $R_{sp} (\la)$ (with the required structure in the structured case) as the strongly minimal linearization $L_{sp} (\la)$. That is, though Theorems \ref{th:arbitrary_rat} and \ref{th:arbitrary_rat_struct} refer to Theorems \ref{th:lin_sproper} and \ref{Rsp} for constructing the strongly minimal linearizations of $R_{sp}(\la)$, which are based on the Laurent expansion of $R_{sp} (\la)$ around infinity, any other method for constructing minimal state-space realizations of $R_{sp} (\la)$ is valid as well.
\end{remark}

\section{Algorithmic aspects}\label{sec:alg}

The constructive proofs given in the earlier sections in fact lead to possible algorithms for computing strongly minimal linearizations of rational matrices, provided the Laurent expansion \eqref{Laurent} is given up to the term $R_{-2k}$, or any minimal state-space realization is available for the strictly proper part (with the adequate structure in the structured cases). The two decompositions that are required for the construction of the linearizations from the Laurent expansion are the rank factorizations of the constant matrices $T$ in \eqref{eq.compressT} and $H$ in \eqref{rankrf}. Moreover, only the right transformation has to be computed in the self-conjugate structured cases considered in this paper.

It is worth pointing out also that both factorizations only require to construct unitary transformations that ``compress'' the rows and columns of a given matrix, which can be obtained by two $QR$ factorizations in the unstructured case and by only one in the structured cases. Highly efficient algorithms that exploit the special block-Hankel structure (which under block-row reversion becomes block-Toeplitz) of $T$ and $H$ can be found in the literature \cite{HeinigRost,vandooren-laurent-1979}.
We have not imposed conditions on the normal rank of the rational matrix nor on its size (it may be square or rectangular) nor on the ranks of the coefficients of the Laurent expansion, although the self-conjugate structures impose that the rational matrices are square in the considered structured cases. We emphasize that this lack of extra conditions is in contrast with any other previous approach in the literature for linearizing structured rational and polynomial matrices.

Once a strongly minimal linearization of a rational matrix is constructed, the results in Subsection \ref{subsec.stronglyminlinear} guarantee that it contains the complete list of structural data of the rational matrix and that the left/right eigenvectors or the left/right minimal bases of the rational matrix can be very easily recovered from those of the linearization. Thus, the complete information of the rational matrix can be computed by applying to the linearization standard classical algorithms for regular \cite{moler-stewart} or singular \cite{Van79b} generalized eigenvalue problems. In addition, in the self-conjugate structured cases, one can use some of the structured algorithms that have been developed for structured generalized eigenvalue problems as, for instance, those in \cite{skew-staircase,Implicit_palQR,Jacobi_Hermitian,Krylov_symmetric,corner3,Schroder_thesis}. The use of structured algorithms is usually more efficient and has the key advantage of preserving in floating point arithmetic the symmetries of the zeros and poles, the relationships between left and right eigenvectors, the fact that the left and right minimal indices are equal to each other and that the left and right minimal bases are closely related to each other.

\section{Conclusions and future work} \label{sec:conclusions}
In this paper we looked at strongly minimal  linearizations for {\em any} given rational matrix $R(\la)$, preserving the structure whenever we have a specific type of self-conjugate structure on $R(\la)$. We showed that there always exist strongly minimal  linearizations that have the same self-conjugate structure as $R(\la)$. These results were known for the case of proper rational matrices \cite{DMQ,GHNSVX02}, but were extended here to rational matrices that are not proper. Moreover, the derivation is new and is based on arguments that are very similar for the strictly proper part and the polynomial part of the rational matrix. The proofs are also constructive and lead to efficient algorithms for the construction of strongly minimal linearizations both in the unstructured and in the structured cases. The fact that the proposed construction of structure preserving {\em strongly minimal} linearizations is valid for {\em any} structured rational matrix improves significantly all the results available in the literature for constructing other classes of structured preserving linearizations of the structured rational and polynomial matrices considered in this paper, which are not valid for all rational or polynomial matrices (see, for instance,  \cite{das-alam-affine,DasAlamArXiv,DeTDM,DMQ,PartI,GoodVibrations} and the references therein). Interesting future work motivated by the results in this paper may include the extension of the construction of structured strongly minimal linearizations to other classes of structured rational or polynomial matrices and to rational matrices whose polynomial part is expressed in bases different from the monomial basis.


\bibliographystyle{siamplain}

\begin{thebibliography}{10}

\bibitem{ADMZ} A.~Amparan, F.M.~Dopico, S.~Marcaida, I.~Zaballa,
\newblock Strong linearizations of rational matrices,
\newblock SIAM J. Matrix Anal. Appl., Vol.39, (2018) 1670--1700.

\bibitem{ADMZ2} A.~Amparan, F.M.~Dopico, S.~Marcaida, I.~Zaballa, \newblock On minimal bases and indices of rational matrices and their linearizations,
\newblock Linear Algebra Appl., Vol.623, (2021) 14--67.

\bibitem{AmMaZa15} A.~Amparan, S.~Marcaida, I.~Zaballa,
		\newblock Finite and infinite structures of rational matrices: a local approach, \newblock  Electron. J. Linear Algebra, Vol.30, (2015) 196--226.

\bibitem{ADHM19} L.M.~Anguas, F.M.~Dopico, R.~Hollister, D.S.~Mackey, \newblock Van Dooren's index sum theorem and rational matrices with prescribed structural data,
\newblock SIAM J. Matrix Anal. Appl., Vol.40, (2019) 720--738.

\bibitem{Greeks2}
E.N.~Antoniou, S.~Vologiannidis,
\newblock Linearizations of polynomial matrices with symmetries and their applications,
\newblock Electron. J. Linear Algebra, Vol.15, (2006) 107--114.

\bibitem{NLEVP}
T. Betcke, N.J. Higham, V. Mehrmann, C. Schr\"{o}der, F. Tisseur,
\newblock NLEVP: A collection of nonlinear eigenvalue problems,
\newblock ACM Trans. Math. Softw., Vol.39(2), (2013) Art. 7, 28 pages.


\bibitem{skew-staircase}
R.~Byers, V.~Mehrmann, H.~Xu,
\newblock A structured staircase algorithm for skew-symmetric/symmetric pencils,
\newblock Electron. Trans. Numer. Anal., Vol.26, (2007) 1--33.


\bibitem{bueno-structured}
M.I. Bueno, K. Curlett, S. Furtado,
\newblock Structured strong linearizations from Fiedler pencils with repetition I,
\newblock Linear Algebra Appl., Vol.460, (2014) 51--80.



\bibitem{bueno-fiedler-like}
M.I. Bueno, F.M. Dopico, J. P\'erez, R. Saavedra, B. Zykoski,
\newblock A simplified approach to Fiedler-like pencils via block minimal bases pencils,
\newblock Linear Algebra Appl., Vol.547, (2018) 45--104.


\bibitem{das-alam-affine}
R.K. Das, R. Alam,
\newblock Affine spaces of strong linearizations for rational matrices and the recovery of eigenvectors and minimal bases,
\newblock Linear Algebra Appl., Vol.569, (2019) 335--368.


\bibitem{DasAlamArXiv}
 R.K.~Das, R.~Alam,
\newblock Structured strong linearizations of structured rational matrices, published online in Linear and Multilinear Algebra,
https://doi.org/10.1080/03081087.2021.1945525

\bibitem{DeTDM2009}
F.~De Ter\'an, F.M.~Dopico, D.S.~Mackey,
\newblock Linearizations of singular matrix polynomials and the
recovery of minimal indices,
\newblock Electron. J. Linear Algebra, Vol.18, (2009) 371--402

\bibitem{DeTDM} F.~De Ter\'an, F.M.~Dopico, D.S.~Mackey,
\newblock Spectral equivalence of matrix polynomials and the index sum theorem,
\newblock Linear Algebra Appl., Vol.459, (2014) 264--333.

\bibitem{DLPV18} F.M.~Dopico, P.W.~Lawrence, J.~P\'erez, P.~Van Dooren,
\newblock Block Kronecker linearizations of matrix polynomials and their backward errors,
\newblock Numer. Math., Vol.140, (2018) 373--426.

\bibitem{DMQ}  F.M.~Dopico, S.~Marcaida, M.C.~Quintana,
\newblock Strong linearizations of rational matrices with polynomial part expressed in an orthogonal basis,
\newblock Linear Algebra Appl., Vol.570, (2019) 1--45.

\bibitem{DMQVD} F.M.~Dopico, S.~Marcaida, M.C.~Quintana, P.~Van Dooren,
\newblock Local linearizations of rational matrices with application to rational approximations of nonlinear eigenvalue problems,
\newblock  Linear Algebra Appl., Vol.604, (2020) 441--475.


\bibitem{DQV} F.M.~Dopico, M.C.~Quintana, P.~Van Dooren,
\newblock Linear system matrices of rational transfer functions,
\newblock to appear in ``Realization and Model Reduction of Dynamical
Systems. A Festschrift to honor the 70th birthday of Thanos Antoulas'', Springer-Verlag. Available as arXiv:1903.05016v1.


\bibitem{drmac} Z. Drma\v{c}, I.\v{S}. Glibi\'c,
\newblock New numerical algorithm for deflation of infinite
and zero eigenvalues and full solution of quadratic
eigenvalue problems,
\newblock ACM Trans. Math. Softw., Vol.46(4), (2020) Art. 30, 32 pages.

\bibitem{fassbender-saltenberger}
H. Fassbender, P. Saltenberger,
\newblock On vector spaces of linearizations for matrix polynomials in orthogonal bases,
\newblock Linear Algebra Appl., Vol.525, (2017) 59--83.


\bibitem{PartI}
H.~Fassbender, J.~P\'erez, N.~Shayanfar,
\newblock Constructing symmetric structure-preserving strong linearizations,
\newblock ACM Commun. Comput. Algebr., Vol.50(4), (2016) 167-169.

\bibitem{For75} G.D.~Forney,
\newblock Minimal bases of rational vector spaces, with applications to multivariable linear systems,
\newblock SIAM J. Control, Vol.13, (1975) 493--520.

\bibitem{Gan59} F.R.~Gantmacher,
\newblock The Theory of Matrices, Vol. {I} and {II} (transl.),
\newblock Chelsea, New York, 1959.

\bibitem{GHNSVX02}
Y.~Genin, Y.~Hachez, Y.~Nesterov, R.~Stefan, P.~Van Dooren, S.~Xu,
\newblock Positivity and linear matrix inequalities,
\newblock European Journal of Control, Vol.8(3), (2002) 275--298.

\bibitem{Hermitian1}
G.M.L.~Gladwell,
\newblock  Inverse Problems in Vibration,
\newblock 2nd edn., Kluwer Academic, Dordrecht, 2004.

\bibitem{GohbergLancasterRodman09} I.~Gohberg, P.~Lancaster, L.~Rodman,
\newblock Matrix Polynomials,
\newblock SIAM Publications, 2009.
\newblock Originally published: Academic Press, New York, 1982.

\bibitem{golub-van-loan-3} G.~Golub, C.F.~Van Loan,
\newblock Matrix Computations,
\newblock Johns Hopkins University Press, Baltimore, MD, 3rd ed., 1996.

\bibitem{hammarling-munro-al}
S. Hammarling, C.J. Munro, F. Tisseur,
\newblock An algorithm for the complete solution of
quadratic eigenvalue problems,
\newblock ACM Trans. Math. Softw., Vol.39(3), (2013) Art. 18, 19 pages.


\bibitem{real}C. Heij, A. Ran, F. van Schagen, Introduction to Mathematical Systems Theory: Linear Systems, Identification and Control, Birkh\"{a}user Verlag, Basel, 2007.

\bibitem{HeinigRost}  G.~Heinig, K.~Rost,
\newblock Algebraic Methods for Toeplitz-like Matrices and Operators,
\newblock Operator Theory, Birkh\"auser, Vol.13, 2013.


\bibitem{hmmt2006} N.J.~Higham, D.S.~Mackey, N.~Mackey, F.~Tisseur,
\newblock Symmetric linearizations for matrix polynomials,
\newblock SIAM J. Matrix Anal. Appl., Vol.29, (2006) 143--159.


\bibitem{Kai80} T.~Kailath,
\newblock Linear Systems,
\newblock Prentice Hall, Englewood Cliffs, NJ, 1980.

\bibitem{Implicit_palQR}
D.~Kressner, C.~Schr{\"o}der, D.S.~Watkins,
\newblock Implicit QR algorithm for palindromic and even eigenvalue problems,
\newblock Numer. Algor., Vol.51, (2009) 209--238.

\bibitem{Lancaster_paper}
P.~Lancaster,
\newblock Strongly stable gyroscopic systems,
\newblock Electron. J. Linear Algebra, Vol.5, (1999) 53--66.

\bibitem{Lancaster66} P.~Lancaster,
\newblock Lambda-Matrices and Vibrating Systems,
\newblock Pergamon Press, Oxford, UK, (1966).

\bibitem{mmmm2006}  D.S.~Mackey, N.~Mackey, C.~Mehl, and V.~Mehrmann, Vector spaces of linearizations for
matrix polynomials, SIAM J. Matrix Anal. Appl., Vol.28, (2006) 971--1004.

\bibitem{GoodVibrations}
D.S.~Mackey, N.~Mackey, C.~Mehl, V.~Mehrmann,
\newblock Structured polynomial eigenvalue problems: good vibrations from good linearizations,
\newblock SIAM J. Matrix Anal. Appl., Vol.28, (2006) 1029--1051.

\bibitem{MMMM-alternating}
D.S.~Mackey, N.~Mackey, C.~Mehl, V.~Mehrmann,
\newblock Jordan structures of alternating matrix polynomials,
\newblock Linear Algebra Appl., Vol.432, (2010) 867--891.


\bibitem{rail1}
V.~Markine, A.D.~Man, S.~Jovanovic, C.~Esveld,
\newblock Optimal design of embedded rail structure for high-speed railway lines,
\newblock Railway Engineering 2000, 3rd International Conference, London, 2000.

\bibitem{Jacobi_Hermitian}
C.~Mehl,
\newblock Jacobi-like algorithms for the indefinite generalized Hermitian eigenvalue problem,
\newblock SIAM J. Matrix Anal. Appl., Vol.25, (2004) 964--985.

\bibitem{Krylov_symmetric}
V.~Mehrmann, C.~Schr\"oder, V.~Simoncini,
\newblock An implicitly-restarted Krylov subspace method for real symmetric/skew-symmetric eigenproblems,
\newblock Linear Algebra Appl., Vol.436(10), (2012) 4070--4087.

\bibitem{corner3}
V.~Mehrmann, D.~Watkins,
\newblock Structure-preserving methods for computing eigenpairs of large sparse skew-Hamiltonian/Hamiltonian pencils,
\newblock SIAM J. Sci. Comput., Vol.22, (2001) 1905--1925.

\bibitem{moler-stewart}
C. Moler, G.W. Stewart,
\newblock An algorithm for generalized matrix eigenvalue problems,
\newblock SIAM J. Numer. Anal., Vol.10(2), (1973) 241-256.

\bibitem{ran2}
A.C.M. Ran,
\newblock Necessary and sufficient conditions for existence of J-spectral
factorization for para-Hermitian rational matrix functions,
\newblock Automatica, Vol.39, (2003) 1935-1939.

\bibitem{ran1}
A.C.M. Ran, L. Rodman,
\newblock Stable invariant Lagrangian subspaces: factorization of symmetric rational matrix functions and other applications,
\newblock Linear Algebra Appl., Vol. 137/138, (1990) 575--620.

\bibitem{Ros70} H.H.~Rosenbrock,
\newblock State-Space and Multivariable Theory,
\newblock Thomas Nelson and Sons, London, 1970.

\bibitem{Schroder_thesis}
C.~Schr\"oder,
\newblock Palindromic and Even Eigenvalue Problems - Analysis and Numerical Methods,
\newblock PhD. Dissertation, Technische Universit\"at Berlin, 2008.


\bibitem{tisseur-meerbergen} F. Tisseur and K. Meerbergen,
\newblock The quadratic eigenvalue problem,
\newblock SIAM Rev., Vol.43, (2001) 235--286.

\bibitem{Van79b} P.~Van Dooren,
\newblock The computation of Kronecker's canonical form of a singular pencil,
\newblock Linear Algebra Appl., Vol.27, (1979) 103--141.

\bibitem{Van81} P.~Van Dooren,
\newblock The generalized eigenstructure problem in linear system theory,
\newblock  IEEE Trans. Aut. Contr., Vol.26(1), (1981) 111--129.

\bibitem{Van81b} P.~Van Dooren,
\newblock A generalized eigenvalue approach for solving Riccati equations,
\newblock SIAM J. Sci. Statist. Comput., Vol.2(2), (1981) 121--135.

\bibitem{VanD83} P.~Van Dooren, P.~Dewilde,
\newblock The eigenstructure of an arbitrary polynomial matrix: computational aspects,
\newblock  Linear Algebra Appl., Vol.50, (1983) 545--579.

\bibitem{vandooren-laurent-1979} P.~Van Dooren, P.~Dewilde, J.~Vandewalle,
\newblock On the determination of the Smith-McMillan form of a rational matrix from its Laurent expansion,
\newblock IEEE Trans. Circuit Syst. Vol.26(3), (1979) 180--189.

\bibitem{VVK79} G.~Verghese, P.~Van Dooren, T.~Kailath,
\newblock Properties of the system matrix of a generalized state-space system,
\newblock Int. J. Control, Vol.30(2), (1979) 235--243.

\bibitem{Ver81} G. Verghese,
\newblock Comments on `Properties of the system matrix of a generalized state-space system',
\newblock Int. J. Control, Vol.31(5), (1980) 1007--1009.

\end{thebibliography}

\end{document}